\theoremstyle{plain}
\newtheorem{Thm}[equation]{Theorem}
\newtheorem{Cor}[equation]{Corollary}
\newtheorem{Prop}[equation]{Proposition}
\newtheorem{lemma}[equation]{Lemma}
\newtheorem{definition}[equation]{Definition}
\numberwithin{equation}{section}
\newcommand{\cG}{\mathcal{G}}
\newcommand{\cO}{\mathcal{O}}
\newcommand{\GL}{{\mathrm{GL}}}
\newcommand{\PGL}{{\mathrm{PGL}}}
\newcommand{\SL}{{\mathrm{SL}}}
\newcommand{\Sp}{{\mathrm{Sp}}}
\newcommand{\SO}{{\mathrm{SO}}}
\newcommand{\Spin}{{\mathrm{Spin}}}
\newcommand{\PGSp}{{\mathrm{PGSp}}}
\newcommand{\PSp}{{\mathrm{PSp}}}
\newcommand{\B}{{\mathrm{B}}}
\newcommand{\C}{{\mathrm{C}}}
\newcommand{\E}{{\mathrm{E}}}
\newcommand{\rF}{{\mathrm{F}}}
\newcommand{\G}{{\mathrm{G}}}
\newcommand{\F}{\mathbb{F}}
\newcommand{\N}{\mathbb{N}}
\newcommand{\Z}{\mathbb{Z}}
\newcommand{\Aut}{\mathrm{Aut}\,}
\newcommand{\coker}{\mathrm{coker}\,}
\newcommand{\Inn}{\mathrm{Inn}\,}
\newcommand{\rk}{\mathrm{rk}\,}
\newcommand{\ad}{{\mathrm{ad}}}
\newcommand{\der}{{\mathrm{der}}}
\newcommand{\ind}{{\mathrm{ind}}}
\newcommand{\Ind}{{\mathrm{Ind}}}
\newcommand{\Hom}{{\mathrm{Hom}}}
\newcommand{\Gal}{{\mathrm{Gal}}}
\newcommand{\st}{{\mathrm{st}}}
\newcommand{\gen}{{\mathrm{gen}}}
\newcommand{\Fr}{{\mathrm{Fr}}}
\begin{document}

\title[Functoriality and the  Inverse Galois problem II]
{Functoriality and the Inverse Galois problem II: groups of type $B_n$ and $G_2$} 
\author[C.~Khare]{Chandrashekhar Khare}
\email{shekhar@math.utah.edu}
\address{Department of Mathematics \\
         UCLA \\
         Los Angeles CA 90095-1555 \\
         U.S.A.}\thanks{CK was partially supported by NSF grants DMS 0355528 and DMS  0653821,  the Miller Institute for Basic Research in Science, University of California Berkeley, and a Guggenheim fellowship.}
\author[M.~Larsen]{Michael Larsen}
\email{larsen@math.indiana.edu}
\address{Department of Mathematics\\
     Indiana University \\
     Bloomington, IN 47405\\
     U.S.A.}\thanks{ML was partially supported by NSF grant DMS  0354772.}
 \author[G.~Savin]{Gordan Savin}
\email{savin@math.utah.edu}
\address{Department of Mathematics \\
         University of Utah \\
         155 South 1400 East, Room 233 \\
           Salt Lake City, UT 84112-0090 \\
         U.S.A.}\thanks{GS was partially supported by NSF grant DMS 0551846.}

\maketitle 
\vskip 5pt

\section{Introduction}

\subsection{Earlier work}

Let $\ell$ be a prime. In our previous work \cite{KLS}, which generalised a result of Wiese \cite{W},  the Langlands functoriality 
principle was used to show that for every positive integer $t$ there exists a positive 
integer $k$ divisible by $t$ such that the finite  
simple group $\C_{n}(\ell^{k})=\PSp_{2n}(\mathbb F_{\ell^{k}})$ is the Galois 
group of an extension of $\mathbb Q$ unramified outside 
$\{\infty, \ell, q\}$ where $q\neq 2$ is a prime 
that depends on $t$. The construction is based on the following three steps. 

\begin{enumerate}

\item Starting 
with a cuspidal automorphic representation on the split group $\SO_{2n+1}$ constructed 
using the Poincar\'e series,  we use the global lift of
Cogdell, Kim, Piatetski-Shapiro and Shahidi \cite{CKPS} and results of 
Jiang and Soudry \cite{JS1} to obtain a self-dual 
cuspidal automorphic representation $\Pi$ of $\GL_{2n}({\mathbb A})$, with $\mathbb A$ the adeles of $\mathbb Q$,  such that the following 
three conditions hold: 
\begin{itemize}
\item $\Pi_{\infty}$ is cohomological. 
\item $\Pi_{q}$ is a supercuspidal representation of depth 0. 
\item $\Pi_{v}$ is unramified for all primes $v\neq \ell,q$. 
\end{itemize}

\item   The work of Kottwitz, Clozel, Harris-Taylor and Taylor-Yoshida   yields  the following theorem (see Theorem 3.6 of  \cite{Ty} or Theorem 1.1 of \cite{Harris}). We use the conventions and notations of \S 1 of \cite{Harris}.

 \begin{Thm}\label{reciprocity}

 Let $\Pi$ be a self-dual 
cuspidal automorphic representation $\Pi$ of $\GL_{m}({\mathbb A})$  such that $\Pi_{\infty}$ is cohomological.
Assume that for some finite place $v_0$ of $\mathbb Q$, $\Pi_{v_0}$ is square integrable. 
Then attached to $\pi$ and a choice of an embedding
 $\iota: \bar{\mathbb Q}  \hookrightarrow \bar{\mathbb Q}_\ell$, there is an  
irreducible $\ell$-adic representation $r'_{\Pi}:G_{\mathbb Q}\rightarrow 
\GL_{m}(\bar{\mathbb Q}_{\ell})$ of the Galois group $G_{\mathbb Q}$ of
 $\mathbb Q$ such that for all primes $v$  of $\mathbb Q$ of residue characteristic $\neq \ell$
we have: 
\[
WD_v(r'_{\Pi})^{\rm Frob-ss}= {\mathcal L}(\Pi_v \otimes | \ \ |_v^{1-m \over 2})
\]
Here $WD_v(r'_{\Pi})$  is the Weil-Deligne parameter of  $r'_{\Pi}|_{D_v}$ with $D_v$ a  decomposition group at $v$,  $\mathcal L$ is the normalised local Langlands correspondence, and $Frob-ss$ denotes Frobenius semisimplification. 

\end{Thm}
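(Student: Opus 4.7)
The plan is to realize $r'_\Pi$ inside the $\ell$-adic étale cohomology of a Shimura variety of PEL type attached to a unitary similitude group $G=\mathrm{GU}(m-1,1)$ over an imaginary quadratic CM field $E/\mathbb{Q}$. First I would choose $E$ so that the square-integrable place $v_0$ splits in $E$, and then invoke quadratic base change to pass from $\Pi$ to a conjugate self-dual cuspidal automorphic representation $\Pi_E$ of $\GL_m(\mathbb{A}_E)$. Using the stable trace formula together with the transfer results of Labesse and Clozel, the square-integrable component at $v_0$ lets one descend $\Pi_E$ to an automorphic representation $\pi$ of $G(\mathbb{A})$ whose archimedean component lies in the discrete series of the correct infinitesimal character, thanks to the cohomological hypothesis on $\Pi_\infty$.

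Next I would extract $r'_\Pi$ from the $\pi_f$-isotypic part of $H^{m-1}_{\mathrm{\acute{e}t}}(\mathrm{Sh}_K(G)_{\bar{\mathbb{Q}}}, \mathcal{F}_\xi)$, where $\mathcal{F}_\xi$ is the $\ell$-adic local system determined by the cohomological weight of $\Pi_\infty$. Matsushima's formula together with Kottwitz's conjecture identify this isotypic component, up to semisimplification, with $\pi_f \otimes R(\pi_f)$ for an $m$-dimensional $\ell$-adic representation $R(\pi_f)$ of $G_{\mathbb Q}$; after the Tate twist by $|\cdot|^{(1-m)/2}$ this will be $r'_\Pi$. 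At a finite place $v \neq \ell$ which is unramified for $\pi_f$ and split in $E$, local-global compatibility follows directly from Kottwitz's Langlands-Kottwitz formula, which expresses the trace of Frobenius on this cohomology via orbital integrals and hence in terms of the unramified Hecke eigenvalues of $\pi_v$, matching $\mathcal{L}(\Pi_v)$ through the Satake isomorphism.

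The hard part is local-global compatibility at all remaining places $v \neq \ell$, and especially the matching of the \emph{full} Weil-Deligne parameter, including the monodromy operator $N$, at the square-integrable place $v_0$ and at other places of ramification. Following Harris-Taylor, I would analyze the geometry of the special fiber of an integral model of $\mathrm{Sh}_K(G)$ at a split place of $E$ of residue characteristic distinct from $\ell$, using the Newton stratification and the Igusa varieties uniformising the strata, to compute the semisimple trace of Frobenius on the sheaf of nearby cycles. Taylor-Yoshida's refinement works with a strictly semistable integral model and applies the weight-monodromy theorem in that geometric setting to pin down the nilpotent operator $N$ and hence identify $\mathrm{WD}_v(r'_\Pi)^{\mathrm{Frob}\text{-}ss}$ with $\mathcal{L}(\Pi_v \otimes |\cdot|_v^{(1-m)/2})$ at every $v \nmid \ell$. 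This last step, promoting semisimple Frobenius compatibility to the equality of Weil-Deligne parameters at the bad primes, is the main technical obstacle and is the principal contribution of \cite{Ty}.
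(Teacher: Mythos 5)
The paper does not actually prove Theorem~\ref{reciprocity}; it is quoted as a known result, attributed to the combined work of Kottwitz, Clozel, Harris--Taylor and Taylor--Yoshida, with pointers to Theorem~3.6 of \cite{Ty} and Theorem~1.1 of \cite{Harris}. Your sketch is a reasonable account of the strategy behind those references, and the principal ingredients --- base change to a CM field, descent to a unitary-type group, realization of $r'_\Pi$ in the \'etale cohomology of a PEL Shimura variety, Kottwitz's point-counting formula at split unramified places, Harris--Taylor's analysis of nearby cycles, and Taylor--Yoshida's refinement giving the full Weil--Deligne parameter --- are all correctly identified, so as a high-level summary it is sound.

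Nonetheless there are a few genuine gaps in the sketch if it were to be turned into a proof. First, the group $\mathrm{GU}(m-1,1)$ attached to a Hermitian space over an imaginary quadratic $E$ has positive $\mathbb{Q}$-rank for $m\ge 3$, so its Shimura variety is non-proper; Matsushima's formula, which you invoke, presupposes a compact quotient. Harris--Taylor and Taylor--Yoshida instead use similitude groups built from a division algebra over $E$ carrying a positive involution of the second kind, and the square-integrability of $\Pi_{v_0}$ is needed here a second time: not only for the archimedean discrete-series condition, but to transfer the base-changed representation to that inner form via a Jacquet--Langlands-type argument. Second, the Shimura variety you fix only yields local--global compatibility at places of $\mathbb{Q}$ that split in $E$; covering all $v$ of residue characteristic $\neq\ell$ requires varying the auxiliary field $E$ and gluing the constructions by a \v Cebotarev argument, which you do not mention. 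Third, the theorem asserts that $r'_\Pi$ is irreducible, a statement your sketch leaves unaddressed; it is deduced in the cited works from the square-integrability at $v_0$ together with purity, and should be recorded as a separate step.
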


\noindent Remark: Let $\chi_\ell$ be the $\ell$-adic cyclotomic character. If $m=2n+1$ 
 we consider a twist  $r_\Pi= r'_\Pi \otimes \chi_\ell^{n}$ and note that we have
\[
WD_v(r_{\Pi})^{\rm Frob-ss}= {\mathcal L}(\Pi_v).
\] 
 

\item The last step consists of reducing $r'_{\Pi}$ modulo $\ell$. The parameter of $\Pi_{q}$ can be picked so that $r_{\Pi}(G_{\mathbb Q_{q}})$ is 
a metacyclic group deeply embedded in $r'_{\Pi}(G_{\mathbb Q})$ \cite{KW}.
That is, for some large positive integer $d$, $r'_{\Pi}(G_{\mathbb Q_{q}})$ is 
contained in every normal subgroup of $r'_{\Pi}(G_{\mathbb Q})$ of index less than or 
equal to $d$. This property is crucial to assure, using the main result of \cite{LP},  
that the reduction modulo $\ell$ is 
a simple group of type $\PSp_{2n}(\mathbb F_{\ell^{k}})$.  

\end{enumerate}

\smallskip

\subsection{Main theorem}

The purpose of this work is to extend these results and to construct finite simple groups 
of type $\B_{n}$ and $\G_{2}$ as Galois groups over $\mathbb Q$.
In the case of $\G_2$, our result depends on a recent technical improvement  
of Theorem \ref{reciprocity} due to Shin \cite{Sh} in the case that $m$ is odd. He shows that 
 we may drop the hypothesis of the existence of a place $v_0$ such that $\Pi_{v_0}$ is 
 square integrable. The resulting representation $r'_{\Pi}$ is semi-simple although it 
 is expected to be irreducible.

\smallskip

We can state our main theorem:

\begin{Thm} \label{main1}Let $t$ be a positive integer. We assume that $t$ is even 
if $\ell=3$ in the first case below: 
\begin{enumerate}
\item  Let $\ell$ be a prime. Then there exists an integer $k$ divisible by $t$ such that 
the simple group $\G_{2}(\mathbb F_{\ell^{k}})$ appears as a Galois group over
$\mathbb Q$. 
\item Let $\ell$ be an odd prime. Then there exists an integer $k$ divisible by $t$ such that 
 the finite simple group $\SO_{2n+1}(\mathbb F_{\ell^{k}})^{\der}$ or the 
finite classical group $\SO_{2n+1}(\mathbb F_{\ell^{k}})$
appears as a Galois group over $\mathbb Q$. 
\item If $\ell \equiv 3, 5\pmod{8}$,  then there exists an integer $k$ divisible by $t$ such that  the finite simple group 
$\SO_{2n+1}(\mathbb F_{\ell^{k}})^{\der}$ 
appears as a Galois group over $\mathbb Q$. 
\end{enumerate}
\end{Thm}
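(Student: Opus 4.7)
The plan is to adapt the three-step construction recalled above for the $\C_n$ case to the groups $\B_n$ and $\G_2$. In each of the three parts I will (i) construct a self-dual cuspidal automorphic representation $\Pi$ of $\GL_m(\mathbb A)$ with $m=2n+1$ in parts (2)--(3) and $m=7$ in part (1); (ii) attach an $\ell$-adic representation $r'_\Pi:G_{\mathbb Q}\to\GL_m(\bar{\mathbb Q}_\ell)$ via Theorem \ref{reciprocity} in the $\B_n$ case and via Shin's odd-dimensional strengthening in the $\G_2$ case; and (iii) reduce modulo $\ell$ and apply the Larsen--Pink classification to identify the image with the target simple group. As in \cite{KLS}, the device that forces largeness of the image will be a depth-zero supercuspidal local component at an auxiliary prime $q$, chosen so that $r'_\Pi(G_{\mathbb Q_q})$ is a tame metacyclic subgroup that is ``deeply embedded'' in the global image in the sense of \cite{KW}; taking different choices of $q$ controlled by $t$ yields the required divisibility of $k$ by $t$.

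For parts (2) and (3) I start with a cuspidal automorphic representation $\pi$ on the split group $\Sp_{2n}$ built via a Poincar\'e series, with $\pi_\infty$ a discrete series, $\pi_q$ a depth-zero supercuspidal with prescribed metacyclic parameter, and $\pi_v$ unramified at all other finite $v\ne\ell$. Since the Langlands dual of $\Sp_{2n}$ is $\SO_{2n+1}$, the functorial lift of \cite{CKPS} combined with the Jiang--Soudry characterization of its image produces a self-dual cuspidal $\Pi$ on $\GL_{2n+1}(\mathbb A)$ with the same local ramification data, cohomological at $\infty$ and square integrable at $q$; this is exactly the input required by Theorem \ref{reciprocity}. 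For part (1) I begin instead with a cuspidal representation of the split exceptional group $\G_2$ constructed similarly, and transfer it through the standard $7$-dimensional embedding $\G_2\hookrightarrow\SO_7\hookrightarrow\GL_7$. Since I cannot in general arrange square integrability of the resulting $\Pi$ at a finite place, Shin's improvement of Theorem \ref{reciprocity} is exactly what is needed here.

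Feeding $\Pi$ into the reciprocity theorem gives $r'_\Pi$ whose local Langlands parameters match those of $\Pi$. For parts (2)--(3), self-duality and orthogonality of the lift force the image of $r'_\Pi$ into $\SO_{2n+1}(\bar{\mathbb Q}_\ell)$. For part (1), compatibility of the Weil--Deligne parameter at $q$ with the $\G_2$-parameter of $\pi_q$, together with the rigidity of the inclusion $\G_2\subset\SO_7$ (for instance via a Galois-invariant alternating trilinear form on the standard representation), pins the Zariski closure of the image inside $\G_2(\bar{\mathbb Q}_\ell)$. Fixing a stable lattice and reducing modulo $\ell$ then yields $\bar r_\Pi$ with values in $\SO_{2n+1}(\bar{\mathbb F}_\ell)$ or $\G_2(\bar{\mathbb F}_\ell)$ respectively, and the deep-embedding property at $q$ combined with the main theorem of \cite{LP} forces the image to contain the target finite simple group defined over $\mathbb F_{\ell^k}$ for some $k$ divisible by $t$. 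The dichotomy in (2) between $\SO_{2n+1}(\mathbb F_{\ell^k})^{\der}$ and the full orthogonal group is controlled by a spinor-norm computation on the image; the congruence $\ell\equiv 3,5\pmod 8$ in (3) is precisely what forces this spinor norm to be trivial, so that the image lies in the derived subgroup. The parity requirement on $t$ when $\ell=3$ in (1) excludes the one degenerate level at which the Larsen--Pink dichotomy does not cleanly isolate $\G_2(\mathbb F_{\ell^k})$.

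The main obstacle I anticipate lies in part (1): establishing that the $\G_2$-to-$\GL_7$ transfer can be carried out with the prescribed cohomological and depth-zero supercuspidal local data while preserving cuspidality, so that Shin's theorem applies, and then showing that the resulting $r'_\Pi$ has its image confined to $\G_2$ rather than merely to $\SO_7$. In the $\B_n$ case the analogous difficulties are milder, since $\Sp_{2n}\to\GL_{2n+1}$ functoriality is already classical; the subtler point there is the spinor-norm / determinant control needed to cleanly separate $\SO_{2n+1}(\mathbb F_{\ell^k})^{\der}$ from $\SO_{2n+1}(\mathbb F_{\ell^k})$ in part (3), which ultimately rests on the fine arithmetic of $\chi_\ell$ modulo $\ell$.
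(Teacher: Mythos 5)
Your outline follows the broad three-step template of \cite{KLS}, but it contains a concrete step that fails and, as a result, misses the key new idea of the paper. You write that the lift of $\pi$ to $\GL_{2n+1}(\mathbb A)$ will be ``square integrable at $q$; this is exactly the input required by Theorem \ref{reciprocity}.'' This is false: for $q$ odd, $\GL_{2n+1}(\mathbb Q_q)$ has \emph{no} self-dual supercuspidal representations at all. The local Langlands parameter $\phi(\tau_1,\ldots,\tau_s)\oplus\chi_q^s$ of the tame regular discrete series of $\Sp_{2n}(\mathbb Q_q)$ is a direct sum of several distinct irreducibles, so its lift to $\GL_{2n+1}(\mathbb Q_q)$ is a full tempered parabolic induction $\Pi_1\times\cdots\times\Pi_s\times\chi_q^s$, not a discrete series. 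Hence the hypothesis of Theorem \ref{reciprocity} (existence of a finite place where $\Pi$ is square integrable) is \emph{not} satisfied by $q$, and your argument stalls at the very point where the paper has to do something new.

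The paper's resolution, which your proposal omits entirely, is to introduce an auxiliary wildly ramified local component at the prime $2$: a depth-one self-dual supercuspidal $\Pi_2$ of $\GL_{2n+1}(\mathbb Q_2)$ whose parameter $\phi_2=\Ind_{W_{\mathbb Q_{2^{2n+1}}}}^{W_{\mathbb Q_2}}(\chi)$ has image $I$ generated by a Jordan subgroup $(\mathbb F_2)^{2n}\subset\SO_{2n+1}(\mathbb C)$. This is what makes $\Pi$ square integrable at a place, so that Theorem \ref{reciprocity} applies in the $\B_n$ case (and it is precisely the absence of such an arrangement through $\G_2$ that necessitates Shin's improvement in the $\G_2$ case). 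The same $\Pi_2$ also resolves two issues you gloss over: (i) for $n=3$ it forces the Zariski closure of $r_\Pi$ to be $\SO_7$ rather than $\G_2$, since $\phi_2(I_2)$ is not conjugate into $\G_2$; and (ii) for part (3), the structure of $I$ --- namely $I/[I,I]\cong\mathbb Z/(2n+1)\mathbb Z$ of odd order --- forces $r_\Pi(D_2)$ to land in $\SO_{2n+1}(\mathbb F_{\ell^k})^{\der}$, and then the argument is a global class field theory computation about whether $2$ splits in the quadratic field ramified only at $\ell$ (which happens iff $\ell\equiv 1,7\pmod 8$), not a spinor norm or $\chi_\ell$-mod-$\ell$ computation as you suggest. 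You have correctly intuited that there is some arithmetic input separating (2) from (3), but you have both the mechanism and the ingredient wrong, because you have not accounted for the supercuspidal place at $2$.

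Finally, you omit the route through $\PGSp_6$: in the $\G_2$ case the lift to $\GL_7$ is \emph{not} a one-step transfer along $\G_2\hookrightarrow\SO_7\hookrightarrow\GL_7$; the paper first lifts to $\PGSp_6$ via the exceptional theta correspondence from the minimal representation of $\E_7$, proves cuspidality by showing the lift to $\PGL_3$ vanishes, restricts to $\Sp_6$, and only then applies \cite{CKPS}. Your proposal also does not say how you would verify cuspidality of $\Pi$ in the $\G_2$ case when $\Pi_2$ is unramified; the paper does this by ruling out the isobaric sum $\Sigma\boxplus\chi$ using the choice of $q$ splitting in all quadratic extensions ramified only at $\ell$. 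These are not cosmetic gaps: without a square-integrable local component or a vanishing argument, you do not even have a Galois representation to reduce.
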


\subsection{Sketch of  proof} 

The construction of Galois groups in Theorem \ref{main1}  is based on the functorial lift from $\Sp_{2n}$ to $\GL_{2n+1}$ 
\cite{CKPS} plus the lift from $\G_{2}$ to $\Sp_{6}$ using the theta correspondence 
arising from the minimal representation of the exceptional group $\E_{7}$ 
(see \cite{Sa1} for a definition of the minimal representation). 
The main new technical difficulty in implementing the strategy of \cite{KLS}
in the present case,  is that $\GL_{2n+1}(\mathbb Q_{p})$ has self-dual 
supercuspidal representations only if $p=2$. Thus, while we can still construct a 
self-dual cuspidal  automorphic representation $\Pi$ of $\GL_{2n+1}$ which should give 
rise to our desired Galois groups, the local component $\Pi_{q}$ cannot be supercuspidal. 
 For groups of type $\B_{n}$ we can 
remedy the situation by requiring that the local component $\Pi_{2}$ be 
supercuspidal (which we pick to be of depth one).
 Existence of a global $\Pi$ with such local component $\Pi_{2}$ 
 is again obtained using the global lift from 
$\Sp_{2n}$ plus recently announced backward lift from $\GL_{2n+1}$ to $\Sp_{2n}$
by Jiang and Soudry \cite{JS2}. 
The local component $\Pi_{2}$ not only assures us of the existence of the $\ell$-adic 
representation $r_{\Pi}$, without using new results of Shin, but it also gives us a certain control over the Galois group 
obtained by reducing $r_{\Pi}$ modulo $\ell$. More precisely, 
 $\Pi_{2}$ can be picked so that the image 
of the local Langlands parameter is a finite group $I$ in $\GL_{2n+1}(\mathbb C)$
with the following properties: 
\begin{itemize}
\item $I/[I,I]\cong \mathbb Z/(2n+1)\mathbb Z$.
\item $[I,I]\cong (\mathbb Z/2\mathbb Z)^{2n}$. 
\end{itemize}
If $\ell\equiv 3,5\pmod{8}$ then the first property of $I$ implies that the 
Galois group is $\SO_{2n+1}(\mathbb F_{\ell^{k}})^{\der}$ and not 
$\SO_{2n+1}(\mathbb F_{\ell^{k}})$.
If $n=3$ then the second property of $I$ implies that $\Pi_{2}$ is not a lift
from $\G_{2}(\mathbb Q_{2})$ and the Galois group is not $\G_{2}(\ell^{k})$. 

\smallskip 
 
\textbf{ Correction to [KLS] :}  With the definition of the group of type $(n,p)$ in [KLS], to ensure
that $\bar{\rho}(D_q)$ in \S 5.2  be of type $(n,p)$ we  should ask that the $K$ of \S 3.3 
also contain ${\mathbb Q} (\zeta_\ell)$, in addition to the other conditions there. 
Alternatively, and better,  the definition
of a group of type $(n,p)$ in \cite{KLS} could be modified  (and made less restrictive) as in Definition \ref{def} below. All statements in [KLS] then go through with this altered definition, with obvious modifications in their proof.

\smallskip

 \textbf{Acknowledgments:}  We would like to thank Dick Gross and Guy Henniart
 for helping us with irreducible supercuspidal parameters and  Mark Reeder for 
 his help with small representations of reductive groups.

\section{Local discrete series parameters}

Let $k$ be a local field and $G$ a connected reductive and split group over $k$. Conjecturally, 
representations of $G(k)$ correspond to (certain) homomorphisms of the Weil-Deligne group 
\[
\phi : WD_{k} \rightarrow G^{\ast}(\mathbb C)
\]
into the Langlands dual group $G^{\ast}(\mathbb C)$. In this paper we shall be concerned 
with the following cases: 
 
 \[
 \begin{array}{|c||c|c|c|c|} \hline 
G &  \GL_{n} &\Sp_{2n} &  \PGSp_{6} & \G_{2} \\
\hline 
G^{\ast} & \GL_{n} & \SO_{2n+1} &\Spin_{7} & \G_{2}\\ 
\hline 
\end{array}
\]

If $G=\Sp_{2n}$, it will be convenient to realize the dual group
  as  $\SO(U)$ for some choice of a non-degenerate 
complex orthogonal space $U$ of dimension $2n+1$. Then a
 discrete series parameter for $\Sp_{2n}(k)$  is a homomorphism  
$\phi : WD_{k} \rightarrow \SO(U)$
such that under the action of $WD_{k}$ the orthogonal space  $U$ decomposes 
into irreducible summands 
\[
 U=U_{1} \oplus \cdots \oplus U_{s}
\]
where each $U_{i}$ is a non-degenerate orthogonal subspace of $U$.  
Moreover, if $\phi_{i}$ denotes the representation of $WD_{k}$ on $U_{i}$, then 
$\phi_{i}\cong \phi_{j}$ if and only if $i=j$. In other words, we are requiring that 
the image of $WD_{k}$ is not contained in a proper Levi factor in $G^{\ast}$. 

\smallskip 

Consider now the case $k=\mathbb R$. In this case the Weil-Deligne group is the same 
as the Weil group $W_{\mathbb R}$. 
 For every non-zero integer $a$ let  $\eta_{a}$ be a 
character of $W_{\mathbb C}\cong \mathbb C^{\times}$ defined by 
\[
\eta_{a}(z)=\left(\frac{z}{\bar z}\right)^{a}.
\]
Let 
\[
\phi(a)=\Ind_{W_{\mathbb C}}^{W_{\mathbb R}} \eta_{a}. 
\] 
This is an irreducible and orthogonal 2 dimensional representation of $W_{\mathbb R}$. 
Its determinant is the unique non-trivial quadratic character $\chi_{\infty}$ of 
$W_{\mathbb R}^{ab}\cong \mathbb R^{\times}$. If $a_{1}, \ldots, a_{n}$ are 
non-zero integers such that $a_{i}\neq \pm a_{j}$, then 
\[
\phi(a_{1}, \ldots , a_{n})\oplus \chi_{\infty}^{n}. 
\] 
is a discrete series parameter for the group $\Sp_{2n}(\mathbb R)$. 
Note that the choice of exponent - $n$ - in  the last summand is made so that the image of 
the parameter is contained in $\SO_{2n+1}(\mathbb C)$. Note that the parameter is 
determined by $a_{i}$ up to permutation of indices and change of signs. 
If $n=3$, then the image of 
the parameter is contained in $\G_{2}(\mathbb C)\subset \SO_{7}(\mathbb C)$ if and only if 
$$
a_{1}+a_{2}+ a_{3}=0
$$
for some choices of signs of $a_{i}$'s. Let $\sigma_{\infty}$ be a generic discrete 
series representation of $\Sp_{2n}(\mathbb R)$ (or of $\G_{2}(\mathbb R)$) corresponding
to this parameter. 

Let $\Pi_{\infty}$ be the lift of $\sigma_{\infty}$ to  
$\GL_{2n+1}(\mathbb R)$. The infinitesimal character of $\Pi_{\infty}$ is represented 
by a $2n+1$-tuple 
\[
(a_{1}, \ldots , a_{n},  -a_{1}, \ldots , -a_{n}, 0).
\]
In particular, $\Pi_{\infty}$ is cohomological, as defined by Clozel \cite{Cl}.

\section{Depth zero generic supercusipdal representations}

Let $q$ be an odd prime. 
Let $\Omega_{q'}$ denote the set of all complex roots of unity of order prime to 
$q$.  The Frobenius acts on $\Omega_{q'}$ by 
$$ 
F(\tau) = \tau^{q}
$$
for every $\tau$ in $\Omega_{q'}$. Note that all  $F$-orbits are finite. These 
orbits play a key role in the description of tame parameters. 

\begin{lemma} Let $\tau$ be  a root of 1 different from $\pm1$. Assume that the 
$F$-orbit 
of $\tau$ has $m$ different elements: 
$$
\tau , \tau^{q},  \ldots, \tau^{q^{m-1}}.
$$ 
If $\tau^{-1}$ is on this list, that is, if 
$\tau^{-1}=\tau^{q^{n}}$ for some $n< m$ then $m=2n$.  
\end{lemma}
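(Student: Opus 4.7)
The plan is to exploit the hypothesis $\tau^{-1}=\tau^{q^n}$ together with the $F$-action to force a divisibility relation, and then squeeze $m$ between two inequalities.

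First, I apply $F^n$ to both sides of $\tau^{-1}=\tau^{q^n}$. Using the fact that $F$ commutes with inversion and that $F^n(\tau)=\tau^{q^n}$, this gives
\[
\tau^{-q^n} = \tau^{q^{2n}}.
\]
Substituting $\tau^{q^n}=\tau^{-1}$ on the left turns the left-hand side into $(\tau^{-1})^{-1}=\tau$, so $\tau^{q^{2n}}=\tau$, i.e.\ $F^{2n}(\tau)=\tau$. Since the $F$-orbit of $\tau$ has exactly $m$ elements, this forces $m \mid 2n$.

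Next, I rule out $m \mid n$: if $m$ divided $n$, then $F^n(\tau)=\tau$, hence $\tau^{-1}=\tau$, hence $\tau^2=1$, contradicting $\tau\neq\pm1$. Thus $m\nmid n$, yet $m\mid 2n$. The hypothesis $n<m$ (together with $n\geq 1$, which follows from $\tau\neq\pm1$ as above) gives $0<2n<2m$, so the only positive multiple of $m$ available for $2n$ is $m$ itself. Hence $2n=m$.

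The argument is essentially a one-line observation; the only mild subtlety is remembering to invoke the hypothesis $\tau\neq\pm1$ in order to exclude the degenerate case $m\mid n$. There is no real obstacle.
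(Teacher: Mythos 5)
Your proof is correct and follows essentially the same route as the paper: raise $\tau^{-1}=\tau^{q^n}$ to the $q^n$-th power (your ``apply $F^n$'' is the same operation) to get $\tau^{q^{2n}}=\tau$, then use $0<2n<2m$ to pin down $m=2n$. The extra paragraph ruling out $m\mid n$ is harmless but redundant, since $0<2n<2m$ already forces the unique multiple of $m$ in that range to be $m$ itself.
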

\begin{proof} First of all, note that $0< n$ since $\tau\neq \pm 1$.  
Raising $\tau^{-1}=\tau^{q^{n}}$ to the $q^{n}$-th power gives 
$\tau=\tau^{q^{2n}}$. Since $\tau=\tau^{q^{k}}$ if and only if $k$ is a multiple of $m$,
and $0< 2n <2m$, it follows that $m=2n$, as claimed. 
\end{proof}

We are now ready to define irreducible tame self-dual 
parameters of $\Sp_{2n}(\mathbb Q_{q})$. 
Let $\mathbb Q_{q^{2n}}$ be the unique unramified extension of $\mathbb Q_{q}$ of degree $2n$. Then 
$$
\mathbb Q_{q^{2n}}^{\times}=\langle q\rangle \times \mathbb F_{q^{2n}}^{\times}
\times U_{1}
$$
where $U_{1}$ is the maximal pro $q$-subgroup of $\mathbb Q_{q^{2n}}^{\times}$. A character 
of $\mathbb Q_{q^{2n}}^{\times}$ is called tame if it is trivial on $U_{1}$. 
Let $\zeta_{2n}$ be a primitive root in $\mathbb F_{q^{2n}}^{\times}$.  
 Pick $\tau$, a complex  root of  $1$ such that 
 the $F$-orbit $\tau, \tau^{q}, \ldots $
 has precisely $2n$ distinct elements and $\tau^{q^{n}}=\tau^{-1}$. (For example, 
 $\tau$ can be picked a primitive root of order $q^{n}+1$.) Then $\tau$ defines 
a tame character $\eta$ of $\mathbb Q_{q^{2n}}^{\times}$ by 
$$ 
\begin{cases}
\eta(\zeta_{2n})=\tau \\
\eta(q)=1. 
\end{cases} 
$$
 Let $W_{\mathbb Q_{q}}$ and $W_{\mathbb Q_{q^{2n}}}$ 
 be the local Weil groups of $\mathbb Q_{q}$ and $\mathbb Q_{q^{2n}}$. Recall that 
$$ 
W_{\mathbb Q_{q}}/W_{\mathbb Q_{q^{2n}}}\cong \Gal(\mathbb F_{q^{2n}}/\mathbb F_{q}).
$$
Via the local class field theory we have an identification 
$W^{ab}_{\mathbb Q_{q^{2n}}}\cong \mathbb Q_{q^{2n}}^{\times}$.  
Note that $\eta\circ F^{i} \neq \eta$ for $1< i \leq 2n$
and $\eta\circ F^{n} = \bar \eta$. 
In particular, the character $\eta$ defines an irreducible, orthogonal  
$2n$-dimensional representation 
$$
\phi(\tau)= \Ind_{W_{\mathbb Q_{q^{2n}}}}^{W_{\mathbb Q_{q}}}(\eta). 
$$
of $W_{\mathbb Q_{q}}$.
 We note that the determinant of $\phi(\tau)$ is the unique unramified 
quadratic character $\chi_{q}$ of 
$W_{\mathbb Q_{q}}^{ab}\cong \mathbb Q_{q}^{\times}$. 

 Pick a  sequence $\tau_{1}, \ldots, \tau_{s}$
of roots in $\Omega_{q'}$ belonging to different $F$-orbits of order
 $2n_{1}, \ldots  , 2n_{s}$ such that $\tau^{q^{n_{i}}+1}=1$ for every $i$ and 
 $2n_{1}+ \cdots + 2n_{s}=2n$. Corresponding to this we have a 
  tame regular discrete series parameter for the split group $\Sp_{2n}(\mathbb Q_{q})$
 \[
\phi= \phi(\tau_{1}, \ldots , \tau_{s})\oplus \chi_{q}^{s}
 \]
where, as  in the case of real groups, the exponent $d$ is picked to assure that the image of the 
parameter is contained in $\SO_{2n+1}(\mathbb C)$.  Note that the image 
$\phi(I_q)$ of the inertia subgroup $I_q \subseteq W_{\mathbb Q_q}$ is contained in a maximal
torus of $\SO_{2n+1}(\mathbb C)$ and $\phi(F)$ is an elliptic element of the Weyl 
group. If $s=1$, for example, then the image of the inertia is a cyclic group generated 
by an element whose eigenvalues are 
\[
\tau, \tau^q, \ldots , \tau^{q^n}, \tau^{-1}, \ldots , \tau^{-q^n}, 1
\]
and $\phi(F)$ correspond to the Coxeter element in the Weyl group.     

  \begin{Prop} The image of a tame regular discrete series parameter $\phi=
  \phi(\tau_{1}, \ldots , \tau_{s})\oplus \chi_{q}^{s}$ of $\Sp_{6}(\mathbb Q_{q})$ is  
  contained in  $\G_{2}(\mathbb C)$ if and only if one of the two holds: 
  \begin{enumerate}
  \item $s=3$ and $\tau_{1} \tau_{2}\tau_{3}=1$, for some choices of 
  $\tau_{i}^{\pm}$ ($F$-orbit of $\tau_{i}$ consists of $\tau_{i}$ and $\tau_{i}^{-1}$). 
  \item $s=1$ and $\tau$ satisfies $\tau^{q^{2}-q+1}=1$. (Recall that
  $\tau$, a priori,  satisfies a weaker condition  $\tau^{q^{3}+1}=1$.)
  \end{enumerate}
  \end{Prop}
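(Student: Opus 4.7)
The plan is to work at the level of a common maximal torus. Coordinatize the weights of the standard $7$-dimensional representation of $\SO_7$ as $\pm\mu_1, \pm\mu_2, \pm\mu_3, 0$. Then, for a suitable orientation, $T_{\G_2} \subset T_{\SO_7}$ is the subtorus cut out by $\mu_1\mu_2\mu_3 = 1$, and $W(\G_2)$ sits inside $W(\B_3) = S_3 \ltimes \{\pm 1\}^3$ as the order-$12$ subgroup of signed permutations $(\sigma;\epsilon)$ with $\epsilon \in \{(+,+,+),(-,-,-)\}$. Consequently, the image of the tame discrete series parameter $\phi$ lies in (a conjugate of) $\G_2(\mathbb C)$ iff one can orient the axes so that (a) the image of inertia lies in this subtorus, and (b) the Frobenius induces an element of $W(\G_2)$, i.e.\ a signed permutation with all signs equal.

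The image of inertia on $U_1 \oplus \cdots \oplus U_s \oplus \mathbb C$ is diagonal; on $U_i$ the eigenvalues form the $F$-orbit of $\tau_i$ (paired into inverses), and $\chi_q^s|_{I_q}$ gives the trivial eigenvalue. Condition (a) then says that one can choose one element $\alpha_j$ from each of the three inverse pairs so that $\alpha_1 \alpha_2 \alpha_3 = 1$. Splitting by the partition $2n_1 + \cdots + 2n_s = 6$ yields three cases. For $s = 3$ all $n_i = 1$, the pairs are $\{\tau_i^{\pm}\}$, and (a) is exactly $\tau_1\tau_2\tau_3 = 1$ for some signs; Frobenius acts as $\tau_i \leftrightarrow \tau_i^{-1}$ on each summand, giving the element $-I = ((-,-,-);\mathrm{id}) \in W(\G_2)$, so (b) is automatic. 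For $s = 1$ the six non-trivial inertia eigenvalues are $\tau^{\pm 1}, \tau^{\pm q}, \tau^{\pm q^2}$; (a) becomes $\tau^{a + bq + cq^2} = 1$ for some $a,b,c \in \{\pm 1\}$, and a direct check on the four absolute values $|a+bq+cq^2|$ against the factorization $q^3+1 = (q+1)(q^2-q+1)$ shows that, given the stipulation that the $F$-orbit of $\tau$ have full length $6$, the only viable option is $\tau^{q^2-q+1} = 1$. With the compatible labeling $\alpha_1 = \tau,\ \alpha_2 = \tau^{-q},\ \alpha_3 = \tau^{q^2}$, Frobenius is then directly seen to realize the Coxeter element $((-,-,-);(1\,2\,3)) \in W(\G_2)$, so (b) holds.

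The remaining case $s = 2$ (forcing $\{n_1, n_2\} = \{1, 2\}$) has to be excluded, and this is the step I expect to be the main obstacle since the torus condition itself can be arranged. The clean argument is: the inverse-pair decomposition of the six eigenvalues is uniquely $\{\tau_1^{\pm}\}, \{\tau_2^{\pm}\}, \{\tau_2^{\pm q}\}$, and Frobenius acts on the $4$-element orbit of $\tau_2$ by the $4$-cycle $\tau_2 \to \tau_2^q \to \tau_2^{-1} \to \tau_2^{-q}$, so the image of Frobenius in $W(\B_3)$ has order exactly $4$. Since $W(\G_2) \cong D_{12}$ contains no element of order $4$, no conjugate of the Frobenius lies in $W(\G_2)$, and the image of $\phi$ cannot be conjugated into $\G_2(\mathbb C)$. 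The three cases together yield the proposition.
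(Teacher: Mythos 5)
Your argument is correct and takes essentially the same route as the paper: both reduce the question to the order of the Frobenius image in the Weyl group ($6$, $4$, $2$ for $s=1,2,3$), exclude $s=2$ because $W(\G_2)$ has no element of order $4$, and translate $\lambda_1\lambda_2\lambda_3 = 1$ on the torus into the stated conditions on the $\tau_i$. Your version makes the embedding $W(\G_2)\hookrightarrow W(\B_3)$ (as the all-signs-equal subgroup) and the four sign choices for $s=1$ more explicit; the one small gap is the unproved converse direction of your iff in (a)+(b), which the paper addresses, equally briefly, by noting that $-1$ and the Coxeter element can be lifted from $W(\G_2)$ to $\G_2(\mathbb C)$.
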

\begin{proof} 
The weights of the 7-dimensional representation of $\G_2(\mathbb C)$ are $0$ and six 
short roots. Pick three short roots $\alpha_1$, $\alpha_2$ and $\alpha_3$ such that 
$\alpha_1+\alpha_2+\alpha_3=0$.  If $t$ is a semi-simple element in $\G_2(\mathbb C)$, 
put $\lambda_i^{\pm}=\pm\alpha_i(t)$. Then $\lambda_1^{\pm}, \lambda_2^{\pm}, \lambda_3^{\pm}$ 
and $1$ are the eigenvalues of $t$ in the 7-dimensional representation. Note that 
$\lambda_1\lambda_2\lambda_3=1$. 
 
 If the parameter $\phi$ is contained in $\G_2(\mathbb C)$ then $\phi(F)$ corresponds to a Weyl 
 group element  in $\G_2$ of even order. Since 2 and 6 are only even orders of elements in 
  the Weyl group of $\G_2$, we see that $s=1$ or $3$.  If $s=3$ then $\phi(F)$ corresponds to $-1$ 
  in the Weyl group and the condition $\lambda_1\lambda_2\lambda_3=1$ translates into 
  $\tau_1\tau_2\tau_3=1$. If $s=1$ then $\phi(F)$ corresponds to the Coxeter element. We can 
  pick the Coxeter element (or alternatively the roots $\alpha_i$) so that it cyclically permutes the roots 
  \[
  \alpha_1, -\alpha_2, \alpha_3, -\alpha_1, \alpha_2,-\alpha_3. 
  \]
 On the other hand, $\phi(I_q)$ is generated by a semi-simple element $t$ with non-trvial 
 eigenvalues $ \tau, \tau^q, \tau^{q^2}, \tau^{-1}, \tau^{-q}, \tau^{-q^2}$
 which $\phi(F)$ permutes cyclically in the given order. In particular, the condition  $\lambda_1\lambda_2\lambda_3=1$ translates into $\tau^{1-q+q^2}=1$, as desired. 
 Conversely, if the parameter satisfies the conditions of (1) and (2) then we can factor $\phi$ 
 through $\G_2(\mathbb C)$ since -1 and the Coxeter element can be lifted from the Weyl 
 group to $\G_2(\mathbb C)$.    
\end{proof}

Let $\sigma_{q}$ be \emph{a} generic supercuspidal 
representation of $\Sp_{2n}(\mathbb Q_{q})$ (or of 
$\G_{2}(\mathbb Q_{q})$) corresponding, via DeBacker-Reeder,  to a tame parameter as above. 
Then the lift of $\sigma_{q}$ to $\GL_{2n+1}(\mathbb Q_{q})$ \cite{Sa2} is 
 $$ 
 \Pi_{1}\times \cdots \times \Pi_{s}\times \chi_{q}^{s}.
 $$
 This is a tempered representation parabolically induced from supercuspidal 
 representations $\Pi_{1}, \ldots , \Pi_{s}$ corresponding to irreducible tame 
 paramters $\phi(\tau_{1}), \ldots , \phi(\tau_{s})$ by the local Langlands 
 correspondence \cite{HT}. 
 We note that the recipe of DeBacker-Reeder \cite{DR}  involves picking a hyperspecial compact 
 subgroup of $\Sp_{2n}(\mathbb Q_{p})$. Since there are two non-conjugate hyperspecial 
 maximal compact subgroups here, there are two possible $\sigma_{q}$. They have 
the same lift to $\GL_{2n+1}(\mathbb Q_{p})$. 

\smallskip 
Of interest to us is the parameter of type $\phi(\tau)\oplus \chi_{q}$ where 
$q$ and $\tau$ are picked using the following lemma (Lemma 3.4 in \cite{KLS}): 

\begin{lemma} \label{deep} Given a positive integer $m=2n$, a prime $\ell$, 
a finite Galois extension $K$ of $\mathbb Q$, and positive integers $t$ and $d$, 
 there exists odd primes $p$ and $q$ such that 
\begin{enumerate}
\item Primes $\ell$, $p$ and $q$ are all distinct. 
\item The prime $p$ is greater than $d$. 
\item If $\SO_{2n+1}(\mathbb F_{\ell^{k}})$ contains an element of order $p$ then 
$\mathbb F_{\ell^{k}}$ contains $\mathbb F_{\ell^{t}}$. In particular, $t$ divides $k$.  
\item The prime $q$ splits completely in $K$. 
\item The order of $q$ in $\mathbb F_{p}^{\times}$ is exactly $m$. 
\end{enumerate}
\end{lemma}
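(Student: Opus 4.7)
The plan is to produce $p$ first using Zsygmondy's theorem, and then to find $q$ via the Chebotarev density theorem applied to the compositum $K\cdot\mathbb{Q}(\zeta_p)$.

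First I would fix a positive integer $e$ divisible by $2t\cdot\mathrm{lcm}(1,2,\ldots,n)$. By the Bang--Zsygmondy theorem, for $e$ sufficiently large there is a primitive prime divisor $p$ of $\ell^{e}-1$, i.e.\ a prime with $\mathrm{ord}_p(\ell)=e$. Such a $p$ is automatically odd and distinct from $\ell$, and Fermat's little theorem gives $e\mid p-1$, forcing $p>e$. Enlarging $e$ further if necessary, I may assume both $p>d$ (yielding condition (2)) and that $p$ is unramified in $K$, the latter forcing $K\cap\mathbb{Q}(\zeta_p)=\mathbb{Q}$.

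For condition (3), any element of order $p$ in $\SO_{2n+1}(\mathbb{F}_{\ell^k})$ is semisimple (as $p\neq\ell$), hence lies in a maximal torus. Maximal tori of $\SO_{2n+1}$ over $\mathbb{F}_{\ell^k}$ are classified by conjugacy classes in the Weyl group of type $B_n$; their orders divide products of factors $\ell^{ka_i}\pm 1$ with $\sum a_i\leq n$. Therefore $p\mid\ell^{2ka}-1$ for some $1\leq a\leq n$, so $e\mid 2ak$. Since $2a\mid 2\,\mathrm{lcm}(1,\ldots,n)\mid e$, the quotient $e/(2a)=t\cdot\mathrm{lcm}(1,\ldots,n)/a$ is an integer that must divide $k$; in particular $t\mid k$, as required.

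For $q$, since $2n\mid e\mid p-1$ and $(\mathbb{Z}/p\mathbb{Z})^\times$ is cyclic, fix an element $g$ of exact order $2n$. The independence $K\cap\mathbb{Q}(\zeta_p)=\mathbb{Q}$ gives an isomorphism $\Gal(K\cdot\mathbb{Q}(\zeta_p)/\mathbb{Q})\cong\Gal(K/\mathbb{Q})\times(\mathbb{Z}/p\mathbb{Z})^\times$, and applying Chebotarev to the conjugacy class $(1,g)$ produces infinitely many primes $q$ whose Frobenius is trivial on $K$ (so (4) holds) and equals $g$ modulo $p$ (so (5) holds). Discarding the finitely many $q\in\{2,\ell,p\}$ yields the desired odd prime distinct from $\ell$ and $p$, completing (1). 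The main subtlety is arranging $K\cap\mathbb{Q}(\zeta_p)=\mathbb{Q}$ so the two Chebotarev conditions become independent, and this is precisely what the flexibility in the initial choice of $e$ buys us.
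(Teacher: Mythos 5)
Your proof is correct. The paper does not reprove this lemma but simply cites it from [KLS, Lemma 3.4], so there is no in-text argument to compare against; your route --- Bang--Zsygmondy to produce a prime $p$ with $\mathrm{ord}_p(\ell)$ equal to a prescribed multiple $e$ of $2t\cdot\mathrm{lcm}(1,\dots,n)$, the maximal-torus order decomposition $|T(\mathbb{F}_{\ell^k})|=\prod(\ell^{ka_i}\pm 1)$ with $\sum a_i=n$ to derive condition (3) (note $\sum a_i = n$, not $\le n$, though this does not affect the argument), and Chebotarev over $K\cdot\mathbb{Q}(\zeta_p)$ with the singleton class $(1,g)$ to produce $q$ --- is sound at each step and is the natural argument for a statement of this kind.
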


We remark that if $n=3$, $\ell=3$, and $t$ is even, then no Ree group
$^2\G_2(\mathbb F_{3^{2f+1}})$ contains an element of order $p$.  Indeed,
\[^2\G_2(\mathbb F_{3^{2f+1}}) < \G_2(\mathbb F_{3^{2f+1}}) 
< \SO_7(\mathbb F_{3^{2f+1}}),\]
and $t$ does not divide $2f+1$.

Let $K$ be the composite of all Galois extensions 
of $\mathbb Q$ of degree $\leq d$ and ramified at $2,\ell$ and no other primes.
 This is a finite degree Galois extension of $\mathbb Q$ ramified at $2, \ell$ and no other primes. 
Let $p$ and $q$ be the primes given by Lemma \ref{deep} applied to this field $K$.  Let $\tau$ be a primitive $p$-th
root of $1$. Since the order of $q$ in $\mathbb F_{p}^{\times}$ is precisely $2n$, 
the $\Fr_{q}$-orbit of $\tau$ gives rise to a tame parameter
 $\phi(\tau)\oplus \chi_{q}$. Moreover, if $n=3$ then $\tau^{q^{2}-q+1}=1$ since $\tau$
 is of order $p$ and $p$, by construction, divides $\Phi_{6}(q)=q^{2}-q+1$. In particular, 
 this parameter is automatically a $\G_{2}$-parameter. In any case, we note that the 
 image of the inertia $I_{q}$ subgroup is a cyclic group of order $p$. The image of 
 the Weil group is a semi-direct product of the cyclic group $\mathbb Z/p\mathbb Z$ 
 and the cyclic group $\mathbb Z/2n\mathbb Z$. This group is also called 
 a \emph{metacyclic} group and denoted by $\Gamma_{2n,p}$. 
 
\section{Irreducible supercuspidal parameters}\label{depth1}
\label{self-dual}

As we have seen in the previous section, the image of a tame supercuspidal parameter 
$\varphi : W_{k} \rightarrow G^{\ast}$ is not irreducible when acting on the 
standard representation $U$ of $G^{\ast}$. In particular, the lift to $\GL_{n}(k)$ 
($n=\dim(U)$) of the corresponding supercuspidal representation is not  
supercuspidal. In order to remedy this, we need to introduce certain wildly ramified 
parameters. This will be done using (so-called) Jordan subgroups of the complex 
reductive group $G^{\ast}$. A Jordan subgroup $J$ of $G^{\ast}$ is an elementary 
abelian $p$-subgroup such that its normalizer $N$ in $G^{\ast}$ is a finite 
subgroup and $J$ is a minimal normal subgroup of $N$ see \cite{KT}, page 505. 
The following is a partial list of Jordan subgroups. 
\[
\begin{array}{|c|c|c|}
\hline 
 G^{\ast} & J & N/J\\
 \hline 
 \SO_{2n+1} & (\mathbb F_{2})^{2n} & S_{2n+1} \\
 \G_{2} & (\mathbb F_{2})^{3} & \SL_{3}(2) \\
 \hline 
 \end{array}
 \]
 Here $S_{2n+1}$ is the symmetric group of $2n+1$ letters. 
 We note that the conjugation action 
 of $N/J$ on $J$ given by the standard representation of $N/J$ on $J$. (In the first case we mean 
 by this the restriction of the permutation representation of $S_{2n+1}$ on 
 $\mathbb F_{2}^{2n+1}$ to the hyperplane given by $\sum_{i=1}^{2n+1}x_{i}=0$.)
 However  the extension of $N/J$ by $J$ is not necessarily split. 

\smallskip 

We shall now construct a map $\varphi: W_{\mathbb Q_{p}} \rightarrow G^{\ast}$
such that the image of the wild inertia is $J$ (in particular $p=2$) and 
the image of $W_{\mathbb Q_{p}}$ is 
an intermediate subgroup $J \subseteq I \subseteq N$ acting irreducibly on the standard representation 
of $G^{\ast}$. 

\smallskip 

Let us consider the case $G^{\ast}=\SO_{2n+1}(\mathbb C)$ first. 
Let us abbreviate $m=2n+1$, and let $\mathbb Q_{2^{m}}$ be the unramified extension of $\mathbb Q_{2}$
of degree $m$. Then 
\[
\mathbb Q_{2^{m}}^{\times}= \langle 2\rangle \times \mathbb F^{\times}_{2^{m}} 
\times U
\]
where $U$ is a pro-$2$ group with a filtration $U\supset U_{1} \supset U_{2} \ldots $
such that $U/U_{1}\cong \mathbb F_{2^{m}}$. Let $e$ be a primitive element 
in $\mathbb F_{2^{m}}$. Let $e_{i}=\Fr_{2}^{i-1}(e)$. 
Then $e=e_{1}, e_{2}, \ldots ,e_{m}$
give a basis of $\mathbb F_{2^{m}}$ over $\mathbb F_{2}$. 
In particular, we have fixed an isomorphism 
\[
U/U_{1} \cong (\mathbb F_{2})^{m}.
\]
In this way any character of $U/U_{1}$ can be viewed as an $m$-tuple of signs. Let 
$\chi$ be the character corresponding to the $m$-tuple $(-,-,+, \ldots , +)$.
We extend $\chi$ to $\mathbb Q_{2^{m}}^{\times}$ so that it is trivial on the first two factors. Since 
$W_{\mathbb Q_{2^{m}}}^{ab}\cong \mathbb Q_{2^{m}}^{\times}$ we can view $\chi$ as a character of $W(K)$. Define 
\[
\phi_{2}=\Ind_{W_{\mathbb Q_{2^{m}}}}^{W_{\mathbb Q_{2}}}(\chi).
\]
Since the conjugates $\chi\circ\Fr_{2}^{i}$, for $i=1,\ldots ,m$, are mutually distinct 
this representation is irreducible by Mackey's criterion.
 Since $\chi$ is quadratic the representation $\phi_{2}$ is also 
 delf-dual and, since $m$ is odd, it is orthogonal. Thus $\phi_{2}$ defines a self-dual 
 supercuspidal representation $\Pi_{2}$ of $\GL_{2n+1}(\mathbb Q_{2})$ by the local 
 Langlands correspondence. 
 
 \smallskip 
 
 For later purposes we need to describe the image of the representation $\phi_{2}$. Note that 
 the intersection of the kernels  of $\chi\circ \Fr_{2}^{i}$  is equal to $\Delta \mathbb F_{2}$, 
 the diagonal in $\mathbb F_{2}^{m}$.

 \begin{Prop} \label{two} 
 Recall that $m=2n+1$. Let $I$ be the image of 
 $W(\mathbb Q_{2})$ under the representation $\phi_{2}$. Then 
 \begin{enumerate}
 \item $I/[I,I]\cong \mathbb Z/m\mathbb Z$. 
 \item $[I,I]\cong \mathbb F_{2}^{m}/\Delta(\mathbb F_{2})$. 
 \item $I$ is contained in a special orthogonal group. 
 \item If $m=7$ then $I$ is not contained in $\G_{2}$. 
\end{enumerate}
\end{Prop}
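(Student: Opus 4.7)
The plan is to first pin down $I$ explicitly inside $\GL_m(\mathbb C)$. The restriction of $\phi_2=\Ind_H^{W_{\mathbb Q_2}}\chi$ to the normal subgroup $H:=W_{\mathbb Q_{2^m}}$ of index $m$ is $\bigoplus_{i=0}^{m-1}\chi^{\Fr^i}$, and each factor is trivial on $\langle 2\rangle\times \F_{2^m}^{\times}\times U_1$, so this restriction factors through $U/U_1\cong \F_2^m$ as the $\F_2$-linear map
\[
v\mapsto (v_1+v_2,\,v_2+v_3,\,\ldots,\,v_m+v_1).
\]
Its kernel is $\Delta(\F_2)$ and its image is the even-weight hyperplane $V_{\rm ev}:=\{w\in\F_2^m:\sum w_i=0\}$, so $\widetilde W:=\phi_2(H)\cong \F_2^m/\Delta(\F_2)$. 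A Frobenius lift $\tilde F$ can be chosen so that $\phi_2(\tilde F)$ is the untwisted cyclic permutation matrix of the standard basis (this uses that $\chi$ is trivial on $\langle 2\rangle$), yielding $I=\widetilde W\rtimes\langle\phi_2(\tilde F)\rangle$, where the generator acts on $\widetilde W\subset V_{\rm ev}$ by cyclic shift $\sigma$.

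To prove (1) and (2) I would compute $[I,I]$ directly. Since $I/\widetilde W\cong \Z/m\Z$ is abelian, $[I,I]\subseteq \widetilde W$; conversely $[I,I]\supseteq [\tilde F,\widetilde W]=(\sigma-1)\widetilde W$. The key observation is that, since $m$ is odd, the only $\sigma$-fixed vector in $V_{\rm ev}$ is $0$ (a nonzero constant $\F_2$-vector of length $m$ has weight $\equiv m\equiv 1\pmod 2$), so $\sigma-1$ is injective and, by finiteness, bijective on $\widetilde W$. This forces $[I,I]=\widetilde W\cong \F_2^m/\Delta(\F_2)$ and $I/[I,I]\cong \Z/m\Z$.

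For (3), orthogonality of $\phi_2$ is already established above, so $I\subseteq \rO_m(\mathbb C)$ and it remains to check $\det\phi_2=1$. On $h\in H$ the matrix $\phi_2(h)$ is diagonal with entries $(-1)^{v_{i+1}+v_{i+2}}$, whose product is $(-1)^{2\sum v_i}=1$; on $\tilde F$ the matrix $\phi_2(\tilde F)$ is a cyclic permutation of $m$ basis vectors, whose sign is $(-1)^{m-1}=1$ since $m$ is odd. Hence $I\subseteq \SO_m(\mathbb C)$.

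For (4), specializing to $m=7$ gives $[I,I]\cong \F_2^6$, an elementary abelian $2$-subgroup of rank $6$. Any embedding $I\hookrightarrow \G_2(\mathbb C)$ would force $[I,I]$ to sit inside $\G_2(\mathbb C)$, but every elementary abelian $2$-subgroup of $\G_2(\mathbb C)$ has rank at most $3$, the Jordan subgroup $(\F_2)^3$ listed in the table above being (up to conjugacy) the unique maximal one. This $2$-rank obstruction is the crux: parts (1)--(3) reduce to direct computations with the explicit description of $I$, while (4) relies on the classical fact that the $2$-rank of $\G_2(\mathbb C)$ equals $3$.
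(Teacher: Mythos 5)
Your proof is correct. Parts (1)--(3) follow essentially the same path as the paper: identify $\phi_2(W_{\mathbb Q_{2^m}})$ with $\mathbb F_2^m/\Delta(\mathbb F_2)$, observe that $[I,I]$ is contained there because the quotient is cyclic of order $m$, and show equality using the cyclic shift $\sigma$ (the paper phrases this via the commutators $[\,e_i,\Fr_2\,]=e_i+e_{i+1}$, which is the same thing as saying $(\sigma-1)$ surjects onto $V_{\mathrm{ev}}$). For (3) you compute both determinants explicitly; the paper gets this for free from the abstract observation that $\det$ has order at most $2$ while $I/[I,I]$ has odd order $m$. Either route is fine.

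Part (4) is where you genuinely diverge. You argue that $[I,I]\cong\F_2^6$ cannot embed in $\G_2(\mathbb C)$ because the $2$-rank of $\G_2(\mathbb C)$ is $3$. This is a correct argument, but the $2$-rank fact is an external classical result (Griess' classification of elementary abelian $p$-subgroups of algebraic groups) which is neither proved nor cited in the paper; the table of Jordan subgroups in Section 4 does not by itself assert maximality of $(\F_2)^3$ among elementary abelian $2$-subgroups of $\G_2$. The paper's argument is more self-contained and entirely elementary: it looks at a single element $\phi_2(e_i)$, whose eigenvalues on the $7$-dimensional representation are $-1$ with multiplicity $2$ and $1$ with multiplicity $5$, and observes that such a multiset cannot be arranged in the form $\lambda_1^{\pm1},\lambda_2^{\pm1},\lambda_3^{\pm1},1$ with $\lambda_1\lambda_2\lambda_3=1$, which is forced for any semisimple element of $\G_2$ acting on the $7$-dimensional representation (as established earlier in the section on $\G_2$-parameters). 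Both approaches are valid; yours is quicker if one is willing to invoke the $2$-rank of $\G_2$, while the paper's uses only material developed in situ.
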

\begin{proof} Since $W_{\mathbb Q_{2}}/W_{\mathbb Q_{2^{m}}}$ is a cyclic group of order $m$, 
in order to prove the first two statements, it suffices to show that the commutator 
is given by the image of $W_{\mathbb Q_{2^{m}}}$. Note that the commutator of $e_{i}$ and $\Fr_{2}$
in $W_{\mathbb Q_{2}}$ is equal to $e_{i}+e_{i+1}$, considered as an element of 
$U/U_{1}\cong \mathbb F_{2}^{m}$. Since $m$ is odd, these elements generate 
$\mathbb F_{2}^{m}/\Delta(\mathbb F_{2})$. The first two statements now follow. 
Since the determinant character is of order two and $I/[I,I]$
is odd, it has to be trivial on $I$. This shows the third statement. 
Finally, if $m=7$, then $\phi_{2}(e_{i})$ has eigenvalues $1$ (with multiplicity 5) and 
$-1$ (with multiplicity 2). These cannot be arranged as a family of type 
$\lambda_{1}^{\pm1}, \lambda_{2}^{\pm},\lambda_{3}^{\pm}$ and $1$ such that 
$\lambda_{1}\lambda_{2}\lambda_{3}=1$.  The proposition is proved. 
\end{proof}

\smallskip 
We now consider the Jordan subgroup in $G_{2}$. 
 the intermediate group $J\subseteq I\subseteq N$ in advance so that $I/J$ is the normalizer of an 
 elliptic torus in $N/J\cong \SL_{3}(2)$. In particular, if we identify $J$ 
 with $\mathbb F_{2^{3}}$ then $I/J$ can be identified as a semi-direct product of 
 $\Gal(\mathbb F_{2^{3}}/\mathbb F_{2})$ and $\mathbb F_{2^{3}}^{\times}$. 
  Since the order of $I/J$ is prime to $J$ one easily check that this extension splits. 
  Note that $I/J$ acts transitively on the set of non-trivial irreducible characters of $J$. 
  Let $\psi$ be a non-trivial additive character of $\mathbb F_{2}$. Then the composition of 
  the trace $Tr: \mathbb F_{2^{3}}\rightarrow \mathbb F_{2}$ is a character of 
  $\mathbb F_{2^{3}}$ such that its stabilizer in $I/J$ is 
  $\Gal(\mathbb F_{2^{3}}/\mathbb F_{2})$. 
 It follows, from Mackey's theory, that $I$ has 3 irreducible 
 faithful representations, of dimension 7, only one of which is self-dual. 
 
 \begin{Prop} \label{jordan}
 Let $\varphi :W_{\mathbb Q_{2}}\rightarrow \G_{2}$ 
  be a parameter with the image $I$. Let 
$\phi_{2}: W_{\mathbb Q_{2}}\rightarrow \GL_{7}(\mathbb C)$ 
obtained by natural inclusion $\G_{2}\subseteq \GL_{7}(\mathbb C)$. Then $\phi_{2}$ is  
 a self-dual,  irreducible representation of $W_{\mathbb Q_{2}}$. 
 \end{Prop}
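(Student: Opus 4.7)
The plan is to decompose $\phi_2|_J$ explicitly, apply Clifford theory to climb from $J$ to $I$, and deduce self-duality from the inclusion $\G_2(\mathbb C)\subset \SO_7(\mathbb C)$.

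First I would pin down the restriction to $J$. Since $J\cong (\mathbb F_2)^3$ is abelian of exponent $2$, the $7$-dimensional space decomposes as a sum of characters of $J$. By the Jordan subgroup table recorded above, the $N/J\cong \SL_3(2)$-action on $J$ is the standard one, so the orbits on $\widehat J$ are $\{\mathbf 1\}$ and the single orbit $\widehat J\setminus\{\mathbf 1\}$ of size $7$. Compatibility of the $N$-action on the $7$-dimensional representation with this decomposition forces
\[
\phi_2|_J \;=\; m\cdot \mathbf 1 \;\oplus\; k\cdot\bigoplus_{\chi\neq\mathbf 1}\chi
\]
for some integers $m,k\ge 0$ with $m+7k=7$. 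Faithfulness of the $7$-dimensional representation of $\G_2(\mathbb C)$, and hence of its restriction to $J$, rules out $k=0$; the only remaining solution is $k=1$, $m=0$. So $\phi_2|_J$ is multiplicity-free and contains each non-trivial character of $J$ exactly once.

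Next I would restrict to $I$. Identifying $J$ with the additive group of $\mathbb F_{2^3}$, the cyclic subgroup $\mathbb F_{2^3}^{\times}\subset I/J$ of order $7$ acts by multiplication and already permutes $J\setminus\{0\}$, and hence $\widehat J\setminus\{\mathbf 1\}$, transitively. Fix a non-trivial $\chi\in \widehat J$ and let $H\subset I$ be its stabilizer; then $[I:H]=7$, $H\supset J$, and the $\chi$-isotypic subspace is one-dimensional, hence automatically irreducible as an $H$-representation. By Clifford's theorem (equivalently: any proper $I$-stable subspace of $\phi_2$ would be a sum of characters in a proper $I/J$-invariant subset of the orbit, and there is none), $\phi_2$ is irreducible.

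Self-duality is then automatic: the defining $7$-dimensional representation of $\G_2(\mathbb C)$ is orthogonal, i.e.\ $\G_2(\mathbb C)\subset \SO_7(\mathbb C)$, so restricting the invariant symmetric form to $I$ exhibits $\phi_2$ as orthogonal, hence self-dual. This is consistent with the earlier remark that $I$ has exactly one self-dual faithful irreducible $7$-dimensional representation. The one piece of structural input that is not pure character theory is the identification of the $N/J$-action on $J$ with the standard $\SL_3(2)$-representation, which is part of the Jordan subgroup data from \cite{KT}; once that is in hand the argument is elementary, and should be the only place where one needs to be careful.
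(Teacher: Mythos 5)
Your proof is correct. It differs from the paper's in packaging rather than substance. The paper's own proof is essentially one line: it invokes the classification recorded in the paragraph preceding the proposition (established there via Mackey's theory applied to the pair $(I,J)$) that every irreducible representation of $I$ is either trivial on $J$ or is faithful of dimension $2^3-1=7$, that there are exactly three faithful ones, and exactly one is self-dual; since $\phi_2|_I$ is faithful of dimension $7$ (as $I\subset\G_2$ and the standard representation of $\G_2$ is faithful) and self-dual (as $\G_2\subset\SO_7$), it must be that unique representation. You instead re-derive irreducibility from scratch by pinning down $\phi_2|_J$ as the multiplicity-free sum of all nontrivial characters of $J$ (using the $N/J\cong\SL_3(2)$-orbit structure on $\widehat J$ and faithfulness on $J$) and then applying Clifford theory using the transitivity of $I/J$ on $\widehat J\setminus\{\mathbf 1\}$. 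Both arguments rest on the same inputs: the Jordan-subgroup data from \cite{KT}, faithfulness of the standard representation, and $\G_2(\mathbb C)\subset\SO_7(\mathbb C)$ for self-duality. Your route is somewhat longer but more self-contained, since it does not presuppose the earlier enumeration of the faithful irreducibles of $I$; the paper's is shorter because it leans on work already done. One small remark: your multiplicity count $m+7k=7$ with $k\ge 1$ actually nails down $\phi_2|_J$ completely with no slack, which makes the Clifford step cleaner than the generic version of the theorem, and you do state the clean version in the parenthetical; that is exactly right.
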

 \begin{proof} This is easy. We know that any 
 irreducible representation of $I$ either has $J$ in the kernel or it is faithful, in which 
 case it is of the dimension $2^{3}-1=7$. There are three such representations, only one is
 is self-dual. 
 Since the restriction of the standard representation of $\G_{2}$ to $I$ 
  is faithful and self-dual, it must be isomorphic to the unique irreducible self-dual 
  representation of $I$ of the same dimension.  
\end{proof}

 \smallskip 
 It remains to show that the group $I$ can be obtained as the
  image of the Weil group $W_{\mathbb Q_{2}}$. 
 Let $L$ be the Galois extension of $\mathbb Q_{2}$ given as the totally ramified 
  extension of $\mathbb Q_{2^{3}}$ of degree $7$. In other words, 
 $L$ is the splitting field of the polynomial 
 \[
 X^{7}-2=0.
 \]
 Note that the Galois group of $L$ is isomorphic to $I/J$. Let $\varpi$ be a 
 uniformizer in $L$, $U\subseteq L^{\times}$ the maximal pro-$2$ subgroup and 
 $U\supseteq U_{1} \supseteq \ldots$ the usual filtration. Then 
  \[
 L^{\times} = \langle \varpi\rangle \times \mathbb F_{2^{3}}^{\times} \times U.
 \]
  Let $\chi$ be a character of $W_{L}^{ab}\cong L^{\times}$
 which is trivial on the the first two factors of $L^{\times}$ and a non-trivial 
 character of $U/U_{1}\cong \mathbb F_{2^{3}}$. Consider the induced 
 representation 
 \[
 \Ind_{W_{L}}^{W_{\mathbb Q_{2}}}(\chi).
 \]
This representation breaks up as a sum of three irreducible representations 
of dimension $7$, one of which is self-dual. 
$W_{K}$ be the kernel of this representation. Then the Galois group of 
$K$ over $\mathbb Q_{2}$ is isomorphic to $I$. In other words, we have constructed 
map $\varphi : W_{\mathbb Q_{2}}\rightarrow G^{\ast}$ with the image $I$.

\section{Local lift from $\G_{2}$ to $\PGSp_{6}$}

The dual group of $\PGSp_{6}(\mathbb Q_{p})$ is $\Spin_{7}(\mathbb C)$. The 
group $\Spin_{7}(\mathbb C)$ has a unique open orbit on the 8-dimensional spin  
representation. The stabilizer of a point in the open orbit is isomorphic to 
$\G_{2}(\mathbb C)$. This gives an embedding 
\[
f: \G_{2}(\mathbb C) \rightarrow \Spin_{7}(\mathbb C)
\]
of dual groups, indicating that there should be a functorial, but non-endoscopic, lift of 
representations from $\G_{2}(\mathbb Q_{p})$ to $\PGSp_{6}(\mathbb Q_{p})$, once 
local Langlands parametrizations for the two groups are established. The Langlands 
parameterization is essentially known for depth zero representations. We shall now 
spell out some special cases of our interest. In order to simplify notation let 
\[
\begin{cases} G=\G_{2}(\mathbb Q_{p}) \\
G'=\PGSp_{6}(\mathbb Q_{p}). 
\end{cases}
\]
Recall that a root of one $\tau$ such that $\tau^{p^{2}-p+1}=1$ defines a 
7-dimensional orthogonal parameter $\phi(\tau)\oplus \chi_{p}$ which is 
contained in $\G_{2}(\mathbb C)$. Therefore, it defines a generic 
supercuspidal representation \emph{denoted by $\sigma(\tau)$} of $G$ and, 
by composing this parameter with the inclusion $f$,  a generic 
supercuspidal representation $\sigma'(\tau)$ of $G'$. The 
representation $\sigma'(\tau)$, when restricted to $\Sp_{6}(\mathbb Q_{p})$, breaks 
up as a sum of two representations in the $L$-packet for 
the parameter $\phi(\tau)\oplus\chi_{p}$. 

We have the following: 
\begin{itemize}
\item The functorial lift of the supercuspidal 
representation $\sigma(\tau)$ is the supercuspidal representation $\sigma'(\tau)$. 
\item The functorial lift of the Steinberg representation $\st_{G}$ is the 
Steinberg representation $\st_{G'}$. 
\item Let $\sigma$ be an 
unramified representation $\sigma$ of $G$ corresponding to 
 a semi-simple conjugacy class (Satake parameter) $s\in \G_{2}(\mathbb C)$. Then 
 the lift of $\sigma$ is $\sigma'$, an unramified representation of $G'$
 corresponding to the parameter $s'=f(s)$. 
\end{itemize}

\smallskip 
 
 Although the local parametrizations for $G$ and $G'$ are not complete, a lift from 
 $G$ to $G'$ is given by a correspondence arising from the minimal 
  representation $\Sigma$ of the split, adjoint $\E_{7}(\mathbb Q_{p})$.  
 More precisely, if $\sigma$ is an irreducible representation of $G$, 
 then we define $\Theta(\sigma)$ to be the set of isomorphism classes of all 
 irreducible representations $\sigma'$ of $G'$ such that 
 $\sigma\otimes \sigma'$ is a quotient of $\Sigma$. 
 
 Let $\psi: U\rightarrow \mathbb C^{\times}$ be a Whittaker character for $G$, 
 where $U$ is a maximal unipotent subgroup of $G$. 
 Recall that a representation $\sigma$ of $G$ is  $\psi$-generic (or simply generic) if 
 $\sigma_{U,\psi}$, the space of $\psi$-twisted $U$-coinvariants, is nonzero. 
   We have the same definition for
  representations of $G'$ with respect to a Whittaker character 
  $\psi' : U' \rightarrow \mathbb C^{\times}$, where  
  $U'$ is a maximal unipotent subgroup of $G'$.
  Let $\Theta_{\gen}(\sigma)$ be 
 the subset of $\Theta(\sigma)$ consisting of generic representations. This set is 
 somewhat easier to determine. 
 
 \begin{Prop} Let $\sigma$ be an irreducible representation of $G$. Then 
 $\Theta_{\gen}(\sigma)\neq 0$ only if $\sigma$ is generic. Moreover, 
 $\Theta_{\gen}(\sigma)$ contains at most one element. 
 \end{Prop}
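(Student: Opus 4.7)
The argument rests on a twisted Jacquet-module computation for the minimal representation $\Sigma$ of $\E_7(\mathbb Q_p)$. The key technical input, whose verification is the principal obstacle, consists of two facts: the bi-twisted coinvariant space $\Sigma_{U \times U', \psi \otimes \psi'}$ is one-dimensional; and the $G$-module $\Sigma_{U', \psi'}$ admits a nonzero quotient onto an irreducible smooth $G$-representation $\tau$ exactly when $\tau$ is $\psi$-generic. Both are established by restricting $\Sigma$ to a Heisenberg-type maximal parabolic $P' \subset \E_7$ whose unipotent radical contains $U'$, and analyzing the $U'$-orbit structure on an explicit model of $\Sigma$ attached to the minimal nilpotent orbit. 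Analogous computations for exceptional theta correspondences appear in the work of Gan-Savin and Magaard-Savin, and are the technical heart of the whole approach.

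Granting these inputs, assertion (i) is immediate. If $\sigma' \in \Theta_{\gen}(\sigma)$, pick a $G \times G'$-equivariant surjection $T \colon \Sigma \twoheadrightarrow \sigma \otimes \sigma'$ and a nonzero $\psi'$-Whittaker functional $\ell'$ on $\sigma'$ (available by genericity). The composite $(\mathrm{id}_\sigma \otimes \ell') \circ T \colon \Sigma \to \sigma$ is a nonzero $G$-equivariant map that kills $\Sigma(u')v - \psi'(u')v$ for every $u' \in U'$ and $v \in \Sigma$, hence factors through $\Sigma_{U', \psi'}$. Existence of this nonzero quotient, together with the second input, forces $\sigma$ to be $\psi$-generic.

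For assertion (ii), suppose for contradiction that $\sigma_1', \sigma_2' \in \Theta_{\gen}(\sigma)$ are non-isomorphic, with surjections $T_i \colon \Sigma \twoheadrightarrow \sigma \otimes \sigma_i'$. Since $\sigma \otimes \sigma_1'$ and $\sigma \otimes \sigma_2'$ are non-isomorphic irreducible smooth $G \times G'$-modules, the only submodules of their direct sum are $0$, either factor, and the whole space; consequently the product
\[
(T_1, T_2) \colon \Sigma \longrightarrow (\sigma \otimes \sigma_1') \oplus (\sigma \otimes \sigma_2')
\]
is surjective. By (i), $\sigma$ admits a nonzero $\psi$-Whittaker functional $\ell_\sigma$; let $\ell_i$ be a $\psi'$-Whittaker functional on $\sigma_i'$. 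For each $(c_1, c_2) \in \mathbb C^2$ the functional
\[
c_1 \cdot (\ell_\sigma \otimes \ell_1) \circ T_1 \;+\; c_2 \cdot (\ell_\sigma \otimes \ell_2) \circ T_2 \colon \Sigma \to \mathbb C
\]
is $(U \times U', \psi \otimes \psi')$-equivariant, hence lies in the dual of $\Sigma_{U \times U', \psi \otimes \psi'}$. Surjectivity of $(T_1, T_2)$ allows one to choose $v \in \Sigma$ with $T_1(v) = a \otimes b$ and $T_2(v) = 0$ where $\ell_\sigma(a)\ell_1(b) \neq 0$, and symmetrically, showing that the choices $(c_1, c_2) = (1, 0)$ and $(0, 1)$ yield linearly independent functionals. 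Thus $\dim \Sigma_{U \times U', \psi \otimes \psi'}^\vee \geq 2$, contradicting the first input. Hence $\sigma_1' \cong \sigma_2'$, and $|\Theta_{\gen}(\sigma)| \leq 1$.
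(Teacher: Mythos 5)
Your argument for the first assertion is fine and is essentially the same as the paper's: a $\sigma'\in\Theta_{\gen}(\sigma)$ yields a nonzero $G$-map $\Sigma\to\sigma$ that factors through $\Sigma_{U',\psi'}$, and since $\Sigma_{U',\psi'}\cong\ind_U^G(\psi)$ (Gan, Theorem 7.1 of [Ga]), $\sigma$ is generic. The difference is that you package the needed fact as an abstract "input (b)" rather than citing this explicit isomorphism, but the content is the same.

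The argument for the second assertion has a genuine gap: your "input (a)", that $\Sigma_{U\times U',\psi\otimes\psi'}$ is one-dimensional, is false. Indeed, by the very identity $\Sigma_{U',\psi'}\cong\ind_U^G(\psi)$ that underlies part (i), one has
\[
\Sigma_{U\times U',\,\psi\otimes\psi'}\;\cong\;\bigl(\ind_U^G(\psi)\bigr)_{U,\psi},
\]
the bi-Whittaker coinvariants of the Gelfand--Graev representation, which are \emph{infinite}-dimensional (the double cosets $Uw_0tU$, $t\in T$, already contribute a one-dimensional space each). One can also see that (a) must fail without any computation, because your argument proves too much: as written it does not use that the $G$-factor in $\sigma\otimes\sigma_1'$ and $\sigma\otimes\sigma_2'$ is the same $\sigma$. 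If $(\sigma_1,\sigma_1')$ and $(\sigma_2,\sigma_2')$ are \emph{any} two non-isomorphic generic-by-generic irreducible quotients of $\Sigma$, the identical surjectivity argument produces two linearly independent $(U\times U',\psi\otimes\psi')$-equivariant functionals, so (a) would force the theta correspondence to have essentially a single pair in its image --- absurd. The correct route, and the one the paper takes, keeps $\sigma$ fixed and uses the one-dimensionality of the $\sigma$-isotypic space $\Hom_G(\ind_U^G(\psi),\sigma)$ for a \emph{fixed} generic $\sigma$ (uniqueness of the Whittaker functional), not the one-dimensionality of the much larger bi-coinvariant space $\Sigma_{U\times U',\psi\otimes\psi'}$. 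Any $\sigma'\in\Theta_{\gen}(\sigma)$ determines, via a choice of Whittaker functional on $\sigma'$, a nonzero element of this one-dimensional space, and the uniqueness statement is extracted from that; your argument would need to be rebuilt on this $\sigma$-relative Hom space rather than on the absolute bi-coinvariants.
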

 \begin{proof} Let $\sigma'$ be in $\Theta_{gen}(\sigma)$. Then 
 $\sigma\otimes \sigma'$ is a quotient of $\Sigma$. Since $\sigma'_{U',\psi'}$
 is one dimensional, we see that $\sigma$ is a quotient of $\Sigma_{U',\psi'}$. 
 Since (\cite[Theorem 7.1]{Ga})
 \[
 \Sigma_{U',\psi'}=\ind_{U}^{G}(\psi), 
 \]
 as a $G$-module, it follows that $\sigma$ is indeed generic. Moreover, since 
 \[
 \Hom_{G}(\ind_{U}^{G}(\psi), \sigma)
 \]
 is one-dimensional for any generic representation $\sigma$ 
 (by uniqueness of the Whittaker functional), the second part 
 follows immediately. The proposition is proved. 
 \end{proof}
 
Given a generic representation $\sigma$,  the above proposition allows us to show
that $\Theta_{\gen}(\sigma)=\{\sigma'\}$ by simply showing that 
$\sigma\otimes \sigma'$ is a quotient of $\Sigma$.

 \begin{Prop} \label{local}
 Assume that $\sigma$ is an irreducible representation of $G=\G_{2}(\mathbb Q_{p})$, 
 belonging to either of the following two families: 
 \begin{enumerate}
 \item Supercuspidal representations $\sigma(\tau)$. 
 \item Generic unramified representations. 
 \end{enumerate}
 Then $\Theta_{\gen}(\sigma)\neq \emptyset$ and the unique representation in $\Theta_{\gen}(\sigma)$ is the 
 functorial lift of $\sigma$, as described above. 
 \end{Prop}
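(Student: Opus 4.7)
The plan is to invoke the preceding proposition: since $\Theta_{\gen}(\sigma)$ contains at most one element for any generic $\sigma$, it suffices in each case to exhibit a generic representation $\sigma'$ of $G'$ whose Langlands parameter is the expected one and for which $\sigma \otimes \sigma'$ appears as a quotient of the minimal representation $\Sigma$. Uniqueness then forces this $\sigma'$ to equal the unique member of $\Theta_{\gen}(\sigma)$, proving both assertions simultaneously.

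For the generic unramified family, the strategy is to study the theta correspondence at the spherical level via Hecke algebras. Fix hyperspecial maximal compact subgroups $K \subset G$ and $K' \subset G'$; the minimal representation $\Sigma$ has a one-dimensional space of $(K \times K')$-fixed vectors on which the spherical Hecke algebras $\mathcal{H}(G,K)$ and $\mathcal{H}(G',K')$ act through characters. The key point, known in this setting from unramified Howe-type duality computations for minimal representations of exceptional groups, is that these characters correspond under the Satake isomorphism to the embedding $f:\G_{2}(\mathbb{C}) \hookrightarrow \Spin_{7}(\mathbb{C})$ of dual groups. Consequently, the unramified representation $\sigma$ with Satake parameter $s$ pairs with the unramified $\sigma'$ whose Satake parameter is $f(s)$, which by definition is its functorial lift; genericity of $\sigma'$ on the $G'$-side is automatic from genericity of the parameter.

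For the supercuspidal representation $\sigma(\tau)$, the plan is to descend the theta correspondence to a correspondence between depth-zero supercuspidal representations. Both $\sigma(\tau)$ and the proposed lift $\sigma'(\tau)$ are produced via DeBacker--Reeder from tame regular parameters attached to the semisimple element in the dual group with eigenvalues parametrized by $\tau$, and the embedding $f$ identifies the relevant anisotropic maximal tori of the finite reductive quotients of the hyperspecial parahorics of $G$ and $G'$. One then analyzes the restriction of $\Sigma$ to the corresponding hyperspecial maximal compacts and passes to the quotient by the pro-$p$ radical to obtain a correspondence of representations of $\G_{2}(\mathbb{F}_{p}) \times \Sp_{6}(\mathbb{F}_{p})$; invoking the matching of Deligne--Lusztig cuspidal characters induced by $f$, this correspondence sends $\sigma(\tau)$ to $\sigma'(\tau)$.

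The main obstacle is the supercuspidal step: we need enough information about the $K$-type structure of the minimal representation of $\E_{7}(\mathbb{Q}_{p})$ on the hyperspecial compacts of the dual pair $(G,G')$ to reduce the computation to one of finite exceptional groups, and then enough explicit Deligne--Lusztig theory to identify the resulting finite-group theta lift with the correspondence predicted by $f$. The unramified case is essentially a formal Satake-algebra matching once the relevant spherical vector in $\Sigma$ has been identified, so the depth-zero supercuspidal computation is the heart of the argument.
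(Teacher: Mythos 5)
Your supercuspidal argument is essentially the paper's, and the ``main obstacle'' you flag is in fact already resolved in the literature. The paper cites Gan's result \cite{Ga} that $\rho\otimes\rho'$ (the cuspidal representations of $\G_2(\mathbb F_p)$ and $\PGSp_6(\mathbb F_p)$ from which $\sigma(\tau)$ and $\sigma'(\tau)$ are induced) is a summand of the minimal representation of the finite group $\E_7(\mathbb F_p)$ --- this is precisely your ``matching of Deligne--Lusztig cuspidal characters induced by $f$'' --- and Savin's $K$-type theorem \cite{Sa1} that this finite minimal representation appears as the first nontrivial $K$-type of the minimal representation $\Sigma$ of $\E_7(\mathbb Q_p)$; Frobenius reciprocity for induction from $K\times K'$ then immediately gives that $\sigma(\tau)\otimes\sigma'(\tau)$ is a summand of $\Sigma$. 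So your plan is right, but you should recognize these two inputs as known rather than as work to be done.

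For the unramified case you take a genuinely different route. The paper does not argue at the level of spherical Hecke algebras. Instead it (i) invokes Mui\'c's and Tadi\'c's irreducibility criteria to show that under the genericity hypothesis both $\sigma$ and the proposed lift $\sigma'$ are full (irreducible) principal series, with $\sigma'=\Ind_P^{G'}(\pi)$ for a maximal parabolic $P=MN$ of $G'$ with Levi $M\cong\GL_3$; (ii) applies Frobenius reciprocity to identify $\Hom_{G\times G'}(\Sigma,\sigma\otimes\sigma')$ with $\Hom_{G\times M}(\Sigma_N,\sigma\otimes\pi)$; and (iii) uses the tower structure --- the Jacquet module $\Sigma_N$ surjects onto the minimal representation of $\E_6(\mathbb Q_p)$, and $\sigma\otimes\pi$ is known to be a quotient of that minimal representation --- to conclude the Hom-space is nonzero. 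Your Satake-matching argument is a plausible alternative, but it leans on an unreferenced assertion that the Hecke characters of $\Sigma$ on $(K\times K')$-fixed vectors realize exactly the dual-group embedding $f:\G_2(\mathbb C)\hookrightarrow\Spin_7(\mathbb C)$; the paper's Jacquet-module route circumvents the need for that spherical Howe duality computation by reducing to the already-established $(\G_2,\GL_3)$ correspondence inside $\E_6$. You should also be explicit that ``genericity of $\sigma'$ is automatic from genericity of the parameter'' requires the Tadi\'c irreducibility criterion: without irreducibility of the induced representation on the $G'$-side, the spherical constituent need not coincide with the generic one.
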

 \begin{proof}
 The supercuspidal representation $\sigma(\tau)$ is induced from a cuspidal 
representation $\rho$ of the finite group $\G_{2}(\mathbb F_{p})$, inflated to a 
hyperspecial maximal compact subgroup of $G$. Similarly, $\sigma'(\tau)$ is induced 
from a cuspidal representation $\rho'$ of $\PGSp_{6}(\mathbb F_{p})$. 
Gan shows in \cite{Ga} that $\rho\otimes \rho'$ is a summand of the minimal 
representation of the adjoint group $\E_{7}(\mathbb F_{p})$. By \cite{Sa1} the minimal 
representation of $\E_{7}(\mathbb F_{p})$ appears as the first non-trivial
$K$-type of the minimal representation of $\E_{7}(\mathbb Q_{p})$. (Here $K$ is a 
hyperspecial maximal compact subgroup in $\E_{7}(\mathbb Q_{p})$. In particular, 
$\E_{7}(\mathbb F_{p})$ is a quotient of $K$ by the first congruence subgroup, and 
the $K$-type is obtained by inflating the minimal representation of $\E_{7}(\mathbb F_{p})$
to $K$.)  It follows, by Frobenius reciprocity, that $\sigma(\tau)\otimes \sigma'(\tau)$ 
is a summand of $\Sigma$. This shows that 
$\Theta_{\gen}(\sigma(\tau))=\{\sigma'(\tau)\}$ as desired. 

  Finally, assume that $\sigma$ is an unramified representation. 
Let $B=TU$ be a Borel subgroup of $G$. Then $\sigma$ is a subquotient 
of $\Ind_{B}^{G}(\chi)$ for some unramified character $\chi$ of $T$. 
(The induction is normalized here.) Recall that any root $\alpha$ defines a 
co-root homomorphism $t \mapsto h_{\alpha}(t)$ from $\mathbb Q_{p}^{\times}$ 
into $T$. Let $\alpha_{1}, \alpha_{2}, \alpha_{3}$ be 
three short roots for $G$ such that 
\[
\alpha_{1}+\alpha_{2}+\alpha_{3}=0. 
\]
This choice is unique up to the action of the Weyl group of $\G_{2}$. We can now 
compose $\chi$ with the co-root homomophisms for $\alpha_{i}$.
 In this way we get 3 characters 
$\chi_{1}, \chi_{2},\chi_{3}$ of $\mathbb Q_{p}^{\times}$ such that 
$\chi_{1}\chi_{2}\chi_{3}=1$. 
Now, if $\sigma$ is generic then (and only then) the whole induced representation is 
irreducible. According to a result of Mui\'c (\cite{Mu} Proposition 3.1) this happens if and 
only if 
\[\chi_{i}\neq |\cdot|^{\pm 1} \text{ and } \chi_{i}/\chi_{j}\neq |\cdot|^{\pm 1}, 
1 \leq i < j \leq 3. 
\]
Next, consider the representation $\pi=\chi_{1}\times \chi_{2}\times \chi_{3}$ of
 $\GL_{3}(\mathbb Q_{p})$ (here we use the notation of Bernstein and 
 Zelevinski). Let $P=MN$ be a maximal parabolic of $G'$ 
 such that $M\cong \GL_{3}(\mathbb Q_{p})$ (see \cite{MaS}). Then 
 the local lift of $\sigma$ is the unique unramified quotient $\sigma'$ of the 
 representation of $G'$ obtained by inducing $\pi$.
 By a result of Tadi\'c (\cite{Ta} Theorem 7.1) this induced representation is irreducible 
 if and only if the same conditions as those of Mui\'c are satisfied. In other words, 
 an unramified representation $\sigma$ is generic if and only if its local lift 
 $\sigma'$ is, and both are equal to a fully induced principal series 
 representation. In particular, $\sigma'=\Ind_{P}^{G'}(\pi)$ (normalized induction). 
  By Frobenius reciprocity, we have 
 \[
 \Hom_{G\times G'}(\Sigma, \sigma\otimes \sigma')=
 \Hom_{G\times M}(\Sigma_{N}, \sigma\otimes \pi)
 \]
 where $\Sigma_{N}$ is the (normalized) Jacquet functor. Next, we recall that
 the minimal representation of $\E_{6}(\mathbb Q_{p})$ is a quotient 
 of $\Sigma_{N}$ \cite{MaS} and that $\sigma \otimes \pi$ is a quotient 
 of the minimal representation of $\E_{6}(\mathbb Q_{p})$ \cite{GaS2}.  
 It follows that $\Hom_{G\times M}(\Sigma_{N}, \sigma\otimes \pi)\neq 0$ and 
 $\sigma\otimes\sigma'$ is a quotient of $\Sigma$, as desired. 
 \end{proof}

\section{Global forms} \label{Global}

Recall that $G$ is the split $\Sp_{2n}$ or $\G_{2}$ over $\mathbb Q$.
 Fix a  prime $\ell$ and $q$ an odd prime different from $\ell$. 
By Theorem 4.5 \cite{KLS} there exists a globally 
generic cuspidal automorphic representation $\sigma$ of $G(\mathbb A)$ such that 
\begin{itemize}
\item $\sigma_{\infty}$ is a generic integrable discrete series representation. 
\item $\sigma_{q}$ is a tame supercuspidal generic representation;
$\sigma_q = \sigma(\tau)$ if $G = \G_{2}$. 
\item $\sigma_{v}$ is unramified for all $v\neq 2, q,\ell$. 
\item If $G=\G_2$ then $\sigma_{2}$ is unramified, and if $G=\Sp_{2n}$, 
$\sigma_{2}$ is the Jiang-Soudry descent of the self-dual 
supercuspidal representation of $\Pi_{2}$ introduced in Section \ref{self-dual}.  
\end{itemize}
The form $\sigma$ lifts to an irreducible, self-dual, automorphic representation 
$\GL_{2n+1}(\mathbb A)$ or  $\GL_{7}(\mathbb A)$, with \emph{trivial} 
central character. This uses the lift of Cogdell et al if $G$ is $\Sp_{2n}$. 
If $G$ is $\G_{2}$ we first use the exceptional theta lift \cite{GRS} to obtain a non-zero
 generic automorphic form $\sigma'$ on $\PGSp_{6}(\mathbb A)$. 
 The form $\sigma'$ is cuspidal if the lift of $\sigma$ to $\PGL_{3}(\mathbb A)$ 
(via the minimal representation of $E_{6}$)  is 0. This holds since $\sigma_{\infty}$ 
is a discrete series representation and it cannot appear as a local component in the 
lift from $\PGL_{3}(\mathbb A)$ \cite{GaS1}. Thus,  $\sigma'$ is a generic 
cuspidal automorphic representation and its local $p$-adic components are determined by
Proposition \ref{local}. The infinitesimal character 
of the real component $\sigma'_{\infty}$ is integral and regular by the matching 
of infinitesimal characters in \cite{HPS}. Next, we restrict $\sigma'$ to 
$\Sp_{6}(\mathbb A)$ and use the lift of Cogdell at al to obtain an 
automorphic representation $\Pi$ of $\GL_{7}(\mathbb A)$. 

\vskip 5pt 

Recall that $\chi_{q}$ is the unique non-trivial quadratic unramified character 
of the local Weil group. The local components of $\Pi$ satisfy: 
\begin{itemize}
\item $\Pi_{\infty}$ has a regular and integral infinitesimal  character. 
\item $\Pi_{q}$  has the parameter $\phi(\tau)\oplus \chi_{q}$. 
\item $\Pi_{v}$ is unramified for all $v\neq 2,q,\ell$. 
\item If $\ell\neq 2$ then $\Pi_{2}$ is unramified or, if $G=\Sp_{2n}$, 
 it has the irreducible parameter $\phi_{2}$. 
\end{itemize}
Note that if $\Pi_{v}$ is unramified than 
the eigenvalues of its Satake parameter are 
\[
\lambda_{1}^{\pm1}, \ldots , \lambda_{n}^{\pm1}, 1.
\]
 Moreover, if $G$ is $\G_{2}$ then we have one additional relation: 
 \[
 \lambda_{1}\lambda_{2}\lambda_{3}=1. 
 \]
 If $\Pi_{2}$ is the self-dual supercuspidal representation of $\GL_{2n+1}(\mathbb Q_{2})$
 with the parameter $\phi_{2}$ then the lift $\Pi$ is clearly supercuspidal. 
 If $\Pi_{2}$ is unramified then, since the parameter of $\sigma_{q}$ is not irreducible, 
 the global representation 
 $\Pi$ might not be cuspidal. We give a criterion which guarantees that it is. Note that the 
 conditions of the following proposition are automatically satisfied if $q$ and the 
 parameter $\phi(\tau)\oplus\chi_{q}$ are picked using Lemma \ref{deep}. 
 
 \begin{Prop} Assume that $\sigma$ is a globally generic cuspidal automorphic 
 representation of $\Sp_{2n}(\mathbb A)$, such that
  $\sigma_{v}$ is unramified at all (finite) primes $v\neq \ell, q$ and 
 $\sigma_{q}$ corresponds to the parameter $\phi(\tau)\oplus\chi_{q}$. 
 If $q$ splits in all quadratic extensions of $\mathbb Q$ ramified at $\ell$ and 
 no other primes then  $\Pi$, the global lift of $\sigma$ to $\GL_{2n+1}(\mathbb A)$,  is cuspidal. 
\end{Prop}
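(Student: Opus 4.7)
The strategy is to rule out a nontrivial isobaric decomposition of $\Pi$ by looking at the local parameter at $q$ and invoking the splitting hypothesis on $q$.

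The starting ingredient is the Jiang--Soudry characterization \cite{JS2} of the image of the Cogdell--Kim--Piatetski-Shapiro--Shahidi transfer from $\Sp_{2n}$ to $\GL_{2n+1}$:  $\Pi$ is an isobaric sum
$
\Pi=\Pi_{1}\boxplus\cdots\boxplus\Pi_{r}
$
of pairwise distinct self-dual cuspidal automorphic representations $\Pi_{i}$ of $\GL_{m_{i}}(\mathbb A)$ of orthogonal type, with $\sum m_{i}=2n+1$.  The goal is to force $r=1$.

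Next I analyze the decomposition at the place $q$.  By compatibility of functoriality with local Langlands, $\mathcal L(\Pi_{q})=\bigoplus_{i}\mathcal L(\Pi_{i,q})$, and the left-hand side equals $\phi(\tau)\oplus\chi_{q}$.  Since $\phi(\tau)$ is irreducible of dimension $2n$ and $\chi_{q}$ is one-dimensional, uniqueness of decomposition into irreducible constituents of this semisimple Weil--Deligne representation forces two alternatives: either $r=1$ (and we are done), or $r=2$ and, after relabeling, $\mathcal L(\Pi_{1,q})=\phi(\tau)$ (so $\dim\Pi_{1}=2n$) while $\mathcal L(\Pi_{2,q})=\chi_{q}$ (so $\Pi_{2}$ is a Hecke character of $\mathbb A^{\times}$).

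To rule out the latter, note that $\Pi_{2}$ is self-dual of dimension one, hence a quadratic Hecke character $\chi$ of $\mathbb Q^{\times}\backslash\mathbb A^{\times}$.  Since $\Pi_{v}$ is unramified at every finite $v\neq\ell,q$, so is $\chi_{v}$; and at $v=q$ the character $\chi_{q}$ is by construction the unramified (but non-trivial) quadratic character of $\mathbb Q_{q}^{\times}$, hence also unramified.  Thus $\chi$ is unramified at every finite prime $v\neq\ell$, and by class field theory corresponds to a quadratic extension $K/\mathbb Q$ whose finite ramification is concentrated at $\ell$; since $\mathbb Q$ admits no everywhere-unramified nontrivial extensions, $K$ is genuinely ramified at $\ell$ and nowhere else among finite primes.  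The non-triviality of $\chi_{q}$ then translates precisely to $q$ being inert in $K$, directly contradicting the hypothesis that $q$ splits in every quadratic extension of $\mathbb Q$ ramified only at $\ell$.  Hence $r=1$, i.e.\ $\Pi$ is cuspidal.

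The principal technical ingredient is the Jiang--Soudry image description (the ``backward lift'' from $\GL_{2n+1}$ to $\Sp_{2n}$); granted that, the remaining argument is a short local-to-global verification. A minor subtlety is to insist that the $1$-dimensional piece that would appear in an isobaric decomposition is forced to match $\chi_{q}$ at $q$ and to be everywhere unramified away from $\ell$, which is where all the hypotheses on $\sigma$ conspire to yield the contradiction.
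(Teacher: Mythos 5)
Your proof is correct and follows essentially the same route as the paper's: the CKPS/Jiang--Soudry image description gives the isobaric decomposition, the irreducibility of the $2n$-dimensional local constituent $\phi(\tau)$ of $\Pi_q$ forces the only possible nontrivial splitting to be $\Pi = \Sigma \boxplus \chi$ with $\chi$ a quadratic Hecke character, and then class field theory plus the hypothesis that $q$ splits in every quadratic field ramified only at $\ell$ yields the contradiction. The only difference is presentational: you spell out more carefully why the irreducibility of $\phi(\tau)$ restricts the decomposition to at most two pieces, a point the paper summarizes as ``forced by the local parameter of $\Pi_q$.''
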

\begin{proof}
If $\Pi$ is not cuspidal then by \cite{CKPS} and forced by 
the local parameter of $\Pi_{q}$ we have an isobaric sum 
$$ 
\Pi = \Sigma\boxplus \chi
$$
where $\Sigma$ is a cuspidal self-dual automorphic representation of 
$\GL_{2n}(\mathbb A)$ and $\chi$ is a quadratic character 
of $\GL_{1}(\mathbb A)$.  The local component $\chi_{v}$ of $\chi$ is 
clearly unramified for all primes $v\neq \ell,q$. It is also unramified 
and non-trivial at $q$ since it corresponds, via the local class field theory, 
 to the one-dimensional summand of the 
parameter of $\Pi_{q}$ (denoted by the same symbol $\chi_{q}$). 
By the global 
class field theory $\chi$ corresponds to a quadratic extension of $\mathbb Q$ 
ramified at $\ell$ and no other primes, and such that $q$ is inert. This is a contradiction.  
\end{proof}

\section{Reductive Groups}

Let $G$ be a connected reductive group over an algebraically closed field of characteristic 
different from 2, and let $T$ be a maximal 
torus in $G$. A representation $V$ of $G$ is called almost miniscule if $V^{T}\neq 0$
and the Weyl group
acts transitively on the set of non-trivial weights of $V$. These representations can be 
easily classified for an almost simple $G$. Assume first that the field characteristic is 0. 
Since $V^{T}\neq 0$ then, by Lie algebra action,
 $V$ contains a root as a weight. All non-zero weights are now Weyl-group conjugates of that root. 
 Therefore, if $G$ is simply laced then $V$ is the adjoint representation of $G$. If $G$ is multiply laced then
the weights are all short roots. We tabulate possible cases: 
\[
\begin{array}{|c|c|c|}
\hline 
 G & \dim(V) & \dim(V^{T})\\
 \hline 
 \B_{n} & 2n+1 & 1 \\
\C_{n} & 2n^{2}-n-1 & n-1\\
 \G_{2} & 7 & 1 \\
 {\rm F}_{4} & 26 & 2 \\
 \hline 
 \end{array}
 \]
 It is interesting to note that dimension of $V^{T}$ is equal to the number of short 
 simple roots. The Weyl group acts, naturally, on $V^{T}$. The action of 
  long root reflections is trivial,  while $V^{T}$ is a reflection representation for the 
 subgroup generated by simple short root reflections.

\begin{Prop} Let $G$ be an almost simple group over an algebraically closed field 
of characteristic $\ell>2$. Then: 
\begin{enumerate} 
\item Any miniscule representation is isomorphic to a Frobenius twist of a representation 
with a miniscule weight as the highest weight. 
\item Assume first that  $\ell \neq 3$ if $G=\G_2$. Then any almost miniscule representation is a  
Frobenius twist of the almost miniscule representation 
with the highest weight equal to a short root. If $\ell =3$ and $G=\G_2$ then there is one additional 
family of miniscule representations. It consists of Frobenius twists of the representation with 
the highest weight equal to a long root. 
\end{enumerate}
\end{Prop}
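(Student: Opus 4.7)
The plan is to apply Steinberg's tensor product theorem to reduce to restricted highest weights, then classify directly which restricted irreducibles have the stated weight structure. Writing the highest weight of $V = L(\lambda)$ in its $\ell$-adic expansion $\lambda = \lambda_0 + \ell\lambda_1 + \cdots + \ell^k\lambda_k$ with each $\lambda_i$ restricted dominant, Steinberg's theorem yields $L(\lambda)\cong L(\lambda_0)\otimes L(\lambda_1)^{(\ell)}\otimes\cdots\otimes L(\lambda_k)^{(\ell^k)}$, so the weight multiset of $V$ is the Minkowski sum $\sum_i\ell^i\,\mathrm{wts}(L(\lambda_i))$. The goal is to establish first that at most one $\lambda_i$ is nonzero, and then that the unique surviving restricted $L(\lambda_i)$ has the highest weight indicated in the statement.

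For the reduction to a single Steinberg factor, suppose $V$ is miniscule or almost miniscule, so that every nonzero weight of $V$ has length $|\lambda|$ under a $W$-invariant inner product. If some $\lambda_i$ and $\lambda_j$ with $i<j$ are both nonzero, then for each $\mu_j\in\mathrm{wts}(L(\lambda_j))$ the element $\lambda + \ell^j(\mu_j-\lambda_j)$ is a weight of $V$, and it is nonzero when $\mu_j\neq\lambda_j$ because $\lambda$ retains a nontrivial $\lambda_i$-contribution at a lower $\ell$-adic level. Equating squared lengths gives $2\langle\lambda,\mu_j-\lambda_j\rangle + \ell^j|\mu_j-\lambda_j|^2 = 0$, and extracting the leading $\ell$-adic terms of this identity (using the expansion of $\lambda$) forces $\lambda_i$ to be orthogonal to every difference $\mu_j-\lambda_j$. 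Since these differences rationally span the root lattice (any nontrivial restricted irreducible has $W\cdot\lambda_j$ among its weights, and simple-reflection differences $(1-s_\alpha)\lambda_j = \langle\lambda_j,\alpha^\vee\rangle\alpha$ generate the root lattice rationally), this forces $\lambda_i$ orthogonal to every root and hence $\lambda_i = 0$, a contradiction. Thus $V\cong L(\lambda_{i_0})^{(\ell^{i_0})}$ for a single index $i_0$.

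It remains to classify restricted $\lambda$ with $L(\lambda)$ miniscule or almost miniscule. For any simple root $\alpha$ with $k = \langle\lambda,\alpha^\vee\rangle>0$, the restricted bound $k<\ell$ lets the characteristic-zero $\mathfrak{sl}_2$-string argument run verbatim, producing the chain of weights $\lambda,\lambda-\alpha,\ldots,\lambda-k\alpha$ inside $L(\lambda)$. For miniscule $V$ all of these must lie in the single orbit $W\lambda$, forcing $k\le 1$; hence $\lambda$ is a classical minuscule fundamental weight and (1) is proved. For almost miniscule $V$ one additionally permits $\lambda-j\alpha=0$, which enlarges the options only to $k=2$ with $\lambda = \alpha$. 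A case check on irreducible root systems then shows that $\lambda$ must be the highest short root (giving the families tabulated in the statement), with a single exception at $G=\G_2$, $\ell=3$: here the Weyl module $V(\omega_2)$ becomes reducible, and $L(\omega_2)$ has dimension $7$ with weight set equal to the six long roots together with $0$, while $L(\omega_2)$ is not a Frobenius twist of $L(\omega_1)$ since $\omega_2\neq 3\omega_1$. The main obstacle will be this characteristic-$3$ exception for $\G_2$, requiring a direct computation of the composition factors of $V(\omega_2)$; once that is settled, confirming that no analogous sporadic coincidence occurs for other root systems at other small primes reduces to a finite inspection.
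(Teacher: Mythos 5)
Your overall strategy — reduce via Steinberg's tensor product theorem to the restricted case, then use $\mathfrak{sl}_2$-strings — matches the paper's. However, two steps in your argument have genuine gaps.

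First, the reduction to a single Steinberg factor. The paper simply says this is ``easily deduced'' and doesn't give an argument, so your attempt to spell it out is welcome, but the orthogonality extraction does not quite work as stated. From $2\sum_k \ell^k\langle\lambda_k,\mu_j-\lambda_j\rangle+\ell^j|\mu_j-\lambda_j|^2=0$, dividing by $\ell^{i_0}$ (the smallest index with $\lambda_{i_0}\neq 0$) and reducing modulo $\ell$ yields only $\ell\mid\langle\lambda_{i_0},\mu_j-\lambda_j\rangle$, not $\langle\lambda_{i_0},\mu_j-\lambda_j\rangle=0$: since $\mu_j-\lambda_j$ is a sum of several roots and $\lambda_{i_0}$ can have coefficients up to $\ell-1$, this inner product is not bounded by $\ell-1$ in absolute value, so divisibility by $\ell$ is not vanishing. (Also, ``simple-reflection differences $(1-s_\alpha)\lambda_j$'' do not rationally span the root lattice in general — take $\lambda_j=\omega_1$ in type $A_n$, where only $\alpha_1$ appears — you need reflections in all positive roots, or a direct argument that the weights of a nontrivial irreducible span.) This step needs to be repaired or replaced.

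Second, in the restricted case your $\mathfrak{sl}_2$-string argument is run only for \emph{simple} roots $\alpha$, giving $\langle\lambda,\alpha_i^\vee\rangle\le 1$ (resp.\ $\le 2$ with $\lambda-\alpha_i$ trivial). From there you jump directly to ``$\lambda$ is a minuscule fundamental weight'' (resp.\ ``$\lambda$ must be the highest short root after a case check''). This jump is not justified: $\langle\lambda,\alpha_i^\vee\rangle\le 1$ for all simple roots does not force $\lambda$ to be fundamental or minuscule ($\omega_1+\omega_n$ in type $A_n$ is a counterexample). The paper supplies exactly what is missing here: a Dynkin-diagram path argument showing at most one coordinate $n_i=\langle\lambda,\alpha_i^\vee\rangle$ can be nonzero (by producing a non-simple root $\alpha$ with $\langle\lambda,\alpha^\vee\rangle=2$, which is still $<\ell$ because $\ell>2$), followed by the observation that a fundamental but non-minuscule weight admits a root $\alpha$ with $\langle\lambda,\alpha^\vee\rangle=2$. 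For the almost-minuscule case the paper then pins $\lambda$ down to a dominant root and rules out the long root (except for $\G_2$ at $\ell=3$) by finding a short root $\beta$ with $\langle\lambda,\beta^\vee\rangle=2$. Your write-up compresses all of this into ``a case check,'' which omits precisely the content of the argument. The $\G_2$, $\ell=3$ exception you flag at the end is handled correctly and matches the paper.
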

\begin{proof} Let $\alpha_1, \ldots , \alpha_r$ be the simple roots for $G$. We shall characterize 
miniscule (and then almost miniscule) representations with the 
 highest weight $\lambda$ such that $0\leq \langle \lambda, \alpha_i^{\vee} \rangle 
\leq \ell -1$ for all $i$.  
The general case can be easily deduced from  the Steinberg tensor product theorem. 
 
For any root $\alpha$, the group $G$ contains a subgroup 
 isomorphic to (a quotient of) $\SL_2$. If $\langle \lambda,\alpha^{\vee}\rangle = n \leq \ell -1$ 
 then the action of $\SL_2$ on the highest 
weight vector will give rise to the weights $\lambda, \lambda - \alpha, \ldots \lambda-n\alpha$. 
By examining the lengths of these weights we see that only the last one is in the Weyl group 
orbit of $\lambda$. This forces $n=0 $ or $1$ if the representation is to be miniscule. 
In particular, if we write $n_i= \langle \lambda, \alpha_i^{\vee}\rangle$, then $n_i = 0 $ or $1$ for all $i$. 
Next, we claim that only one $n_i$ could be 1. Otherwise, we can pick a path in the 
Dynkin diagram $\alpha_i, \alpha_{i+1}, \ldots, \alpha_j$ such that $n_i = n_j =1$ and 
$n_k=0$ for any $\alpha_k$ between $\alpha_i$ and $\alpha_j$. 
Consider the root  $\alpha= \alpha_i + 
\alpha_{i+1} + \cdots + \alpha_j$. Since $\langle \lambda, \alpha^{\vee}\rangle =2$,  this is a contradiction. (Note that we have just used the condition $\ell\neq 2$.)
Finally, if the weight $\lambda$ is fundamental but not miniscule then there exists a positive 
root $\alpha$ such that $\langle\lambda,\alpha^{\vee}\rangle =2 $. Again, a contradiction. 

The proof of (2) is similar, except when $\langle\lambda,\alpha^{\vee}\rangle = 2$. Then 
$\lambda, \lambda-\alpha$ and $\lambda-2\alpha$ are weights with $\lambda-\alpha$ the 
shortest length among the three weights. Since $0$ is the only other orbit of weights, 
we must have $\lambda=\alpha$.  If $\alpha$ is a long highest root, and the root system is of 
the type $B_n$, $C_n$ or $F_4$ then there exists a short root $\beta$ such that 
$\langle\alpha, \beta^{\vee}\rangle =2$. This implies that the short root $\alpha-\beta$ is also a weight, 
and the representation cannot be almost miniscule. 
If $\ell =3$ and $G=\G_2$ then this argument breaks down. The adjoint representation breaks up as
$14=7+ 7'$ where $7$ is the representation whose non-trivial weights are short roots, while 
$7'$ is a representation whose non-trivial weights are long roots. 
 \end{proof}

We also note that the 26-dimensional almost miniscule representation of ${\rm F}_4$ is not 
irreducible. It breaks up as $26=25+1$. The main result of this section is 
 the following:

\begin{Prop} \label{almost}
Let $G$ be a connected reductive group over an algebraically closed field of characteristic 
different from 2. Let $V$ be an irreducible and faithful representation of $G$ of 
dimension $2n+1$, preserving a non-degenerate bilinear form. Assume that there exist 
$2n$ different weights in $V$ permuted cyclically by a Weyl group element. 
Then $G=\SO_{2n+1}$ or $\G_{2}$ if $n=3$. 
\end{Prop}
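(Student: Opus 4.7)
The plan is to show that the hypotheses force $V$ to be an almost miniscule representation in the sense of the preceding section, then to reduce to $G$ being almost simple, and finally to invoke the classification just established.

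First, since $V$ is self-dual of odd dimension and the characteristic is not $2$, the invariant bilinear form must be symmetric (no nondegenerate symplectic form exists on an odd-dimensional space), giving $G\hookrightarrow \SO(V)=\SO_{2n+1}$. Let $w\in W$ be the Weyl element that cyclically permutes the $2n$ distinct weights $\lambda_1,\ldots,\lambda_{2n}$. Since $w$ acts as a single $2n$-cycle on them, none of the $\lambda_i$ can equal $0$ and each has the same multiplicity $m$; the inequality $2nm\le 2n+1$ then forces $m=1$ with exactly one extra weight $\mu$ of multiplicity $1$. Self-duality gives $m_\lambda=m_{-\lambda}$ for every weight. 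If $\mu\neq 0$, then $-\mu$ would coincide with some $\lambda_i$, and removing $-\mu$ from $\{\lambda_1,\ldots,\lambda_{2n}\}$ would leave $2n-1$ nonzero weights closed under $\lambda\mapsto -\lambda$, which is impossible since such a set has even cardinality. Hence $\mu=0$ and the cyclic set is closed under negation. Because $\langle w\rangle\subseteq W$, all $2n$ nonzero weights lie in a single $W$-orbit, and $V^T\neq 0$, so $V$ is almost miniscule.

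Next I would reduce to the case of almost simple $G$. Self-duality forces any connected central torus to act through a character of order at most $2$, which must be trivial; combined with faithfulness this shows $G$ is semisimple with center of order at most $2$. Decomposing up to central isogeny as $G=G_1\times\cdots\times G_r$ with each $G_i$ almost simple gives $V=V_1\boxtimes\cdots\boxtimes V_r$, and faithfulness forces each $V_i$ nontrivial. Since each $V_i$ is self-dual of odd dimension, the same $m_\lambda=m_{-\lambda}$ argument shows $V_i$ has a nonzero $T_i$-fixed subspace. If $r\ge 2$, then $V$ contains both a nonzero weight of the form $(\lambda_1,0,\ldots,0)$ and one of the form $(0,\lambda_2,0,\ldots,0)$ with $\lambda_1,\lambda_2\neq 0$; these lie in distinct orbits of $W=W_1\times\cdots\times W_r$, contradicting the unique $W$-orbit of nonzero weights established above. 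Hence $r=1$.

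Finally I would invoke the preceding proposition. Among the almost miniscule representations of almost simple groups listed there, I need $\dim V=2n+1$: the $\B_n$-case is tautological; the $\C_n$-case gives $2n^2-n-1$, which equals $2n+1$ only when $n=2$, where $\C_2=\B_2$ is absorbed into the $\SO_{2n+1}$ conclusion; the $\rF_4$-case gives dimension $26$, which is neither of the form $2n+1$ nor irreducible (it splits as $25+1$); and $\G_2$ gives dimension $7$, matching $n=3$ (including the exceptional $7'$ family in characteristic $3$, whose image is still $\G_2$). Since Frobenius twists do not alter the image of $G$ in $\GL(V)$, we conclude $G=\SO_{2n+1}$, or $G=\G_2$ when $n=3$, as claimed. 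The main obstacle I anticipate is the combinatorial weight-bookkeeping in the first paragraph, tying the cyclic permutation structure to self-duality in order to pin down $\mu=0$, together with the tensor-product reduction, which crucially needs every factor to contribute a zero weight.
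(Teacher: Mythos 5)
Your proof follows essentially the same route as the paper: establish that $V$ is almost miniscule with $\dim V^T=1$, reduce to $G$ almost simple via the tensor-product decomposition, then invoke the classification from the preceding proposition. Your argument that the extra weight $\mu$ is zero is phrased differently from the paper's (you use the parity of the set of nonzero weights closed under negation, whereas the paper observes that the Weyl group preserves weight lengths and so must fix the extra weight), but both work.

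There is, however, a genuine gap in the final step. You dismiss the $\rF_4$ case by noting that its almost miniscule representation has dimension $26$, ``which is neither of the form $2n+1$ nor irreducible (it splits as $25+1$).'' But that dismissal overlooks precisely the case the paper has to work to exclude: in characteristic $3$, the irreducible highest-weight module $L(\varpi_4)$ for $\rF_4$ has dimension $25 = 2\cdot 12 + 1$, is self-dual and faithful, and has $\dim V^T = 1$ --- so it passes every test you have applied up to that point. You cannot rule it out merely by dimension or irreducibility. The paper eliminates it with an additional observation: the $2n=24$ nonzero weights would have to be cyclically permuted by a Weyl group element of order $24$, but the maximal orders of elements in $W(\rF_4)$ are $8$ and $12$ (as tabulated later in the paper), so no such element exists. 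You need to add this Weyl-group-order argument, or an equivalent one, to close the $\rF_4$ case in characteristic $3$; without it, the conclusion ``$G = \SO_{2n+1}$ or $\G_2$'' is not established.
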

\begin{proof}  
 Let $Z$ be the connected component of the center of $G$. The characters 
of $Z$ form a lattice. In particular, the only self-dual character is the trivial character. 
This shows, since $V$ is faithful, that $Z$ is trivial. Therefore $G$ is semi-simple. 
Since weights of a self-dual representation come in pairs $\{\mu, -\mu\}$, and the 
Weyl group preserves the length of weights, the weight outside the cycle must be 
trivial, so $\dim(V^{T})=1$.
  Next, let $G_{1}\times \cdots \times G_{k}$ be a product of almost simple 
groups isogenous to $G$ such that $V=V_{1}\otimes \cdots \otimes V_{k}$ 
is a tensor product of irreducible,  \emph{non-trivial} 
representations of $G_{1}, \ldots, G_{k}$. 
Since $V^{T}\neq 0$, the zero weight must appear in each $V_{1}, \ldots , V_{k}$. 
But then $V$ can be almost miniscule only if $k=1$. From the list of almost 
miniscule representations of almost simple groups we see that $V$
 must be the standard representation of $\SO_{2n+1}$ or $G_{2}$ or a Frobenius twist of it, 
 if $\ell \neq 3$. If $\ell =3$ then ${\rm F}_4$ has an almost miniscule representation of dimension $25$. 
 However, since ${\rm F}_4$ has no element of order 24 in its Weyl group, there is no element 
 permuting all non-trivial weights. This completes the proof. 
\end{proof}

\section{Galois representation}\label{Galois}

Let $\Pi$ be the self-dual cuspidal automorphic representation of $\GL_{2n+1}(\mathbb A)$ 
constructed by lifting from $\Sp_{2n}$, or from $\G_{2}(\mathbb A)$ 
in Section \ref{Global}.  In particular, 
\begin{itemize}
\item The infinitesimal character of $\Pi_{\infty}$ is regular and integral. 
\item The local component $\Pi_{v}$ is unramified for all $v\neq \ell, q$
\item The local parameter of $\Pi_{q}$ is $\phi(\tau)\oplus\chi_{q}$ where 
$q$ splits in any quadratic extension of $\mathbb Q$ ramified at $\ell$ only. 
\item If $\ell\neq 2$ then $\Pi_{2}$ is unramified or, if $G=\Sp_{2n}$, 
the local parameter is the irreducible representation $\phi_{2}$. 
\end{itemize}

By the generalization of Theorem \ref{reciprocity} due to Shin, 
and the remark after it, one can attach a 
semi-simple Galois representation to $\Pi$:
\[
r_{\Pi}: G_{\mathbb Q} \rightarrow \GL_{2n+1}(\bar{\mathbb Q}_{\ell})
\]
such that for every 
prime $v\neq \ell$ the restriction of $r$ to the decomposition group $D_{v}$ gives the Langlands parameter of 
$\Pi_{v}$, up to a Frobenius semi-simplification.

[Note  that  when the Weil-Deligne parameter ${\mathcal L}(\Pi_v)$ for all $v\neq \ell$
has monodromy $N=0$, we can state Theorem \ref{reciprocity}  as : 
\[
 r_{\Pi}|_{D_v}^{\rm Frob-ss}= {\mathcal L}(\Pi_v) 
 \]
  and where 
${\mathcal L}(\Pi_v)$  may be regarded, using the embedding $\iota:\bar {\mathbb Q}  \hookrightarrow \bar {\mathbb Q}_\ell$, as a representation of the decomposition group $D_v$ at $v$ with values in $\GL_{2n+1}(\bar{\mathbb Q}_\ell)$.]

\smallskip 

Let us concentrate on 
the prime $q$. Here $\Pi_{q}$ is the lift of a supercuspidal representation of
 $\Sp_{2n}(\mathbb Q_{q})$ whose parameter, when restricted to the inertia group 
 $I_{\mathbb Q_{q}}$, is a direct sum of $2n+1$ one dimensional characters. One trivial and 
 $2n$ other cyclically permuted by $\Fr_{q}$. It follows that $r_{\Pi}(\Fr_{q}^{2n})$ commutes 
 with $r_{\Pi}(I_{q})$. This shows that $r_{\Pi}(\Fr_{q}^{2n})$ and $r_{\Pi}(\Fr_{q})$ must
be  semi-simple. In other words, $r$ gives exactly the parameter of $\Pi_{q}$. The same 
argument shows that the restriction of $r_{\Pi}$ to $D_{2}$ gives the parameter of $\Pi_{2}$
if $\Pi_{2}$ is supercuspidal.

\begin{Prop} If $\Pi$ satisfies the above conditions at places $\infty$, $2$ and $q$  
then the Galois representation $r_{\Pi}$ is irreducible and orthogonal. 
\end{Prop}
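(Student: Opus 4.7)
My plan is to deduce orthogonality from irreducibility and to establish irreducibility by a case split according to the behavior of $\Pi_{2}$.

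Orthogonality comes for free once irreducibility is in hand. Self-duality of $\Pi$ makes each local parameter $\mathcal{L}(\Pi_{v})$ self-dual, so by the local-global compatibility recalled above the semisimple representations $r_{\Pi}$ and $r_{\Pi}^{\vee}$ have equal Frobenius traces at every unramified $v$; Chebotarev then forces $r_{\Pi}\cong r_{\Pi}^{\vee}$. An irreducible self-dual representation carries, up to scalar, a unique invariant bilinear form, which is either symmetric or alternating, and the odd dimension $2n+1$ rules out the alternating possibility. Hence $r_{\Pi}$ is orthogonal.

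For irreducibility, first suppose $G=\Sp_{2n}$ and $\Pi_{2}$ has the irreducible parameter $\phi_{2}$ (so $\ell\neq 2$). Exactly as in the discussion of the prime $q$ at the start of Section \ref{Galois}, the fact that $\Fr_{2}^{N}$ commutes with the inertia image for some $N$ upgrades the Frobenius-semisimplification to a genuine equality $r_{\Pi}|_{D_{2}}\cong \phi_{2}$; this is irreducible by Proposition \ref{two}, so $r_{\Pi}$ is too.

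If instead $\Pi_{2}$ is unramified (as in the $\G_{2}$ case, still with $\ell\neq 2$), I use $q$. The restriction $r_{\Pi}|_{D_{q}}$ has semisimplification $\phi(\tau)\oplus \chi_{q}$ with $\phi(\tau)$ irreducible of dimension $2n$ and $\chi_{q}$ the unramified quadratic character. Any nontrivial global decomposition of $r_{\Pi}$ must refine this local one, and the only option compatible with the dimensions and with the irreducibility of $\phi(\tau)$ is $r_{\Pi}=R\oplus \eta$, where $R|_{D_{q}}\cong \phi(\tau)$ and $\eta|_{D_{q}}=\chi_{q}$. Since $R$ is then globally irreducible, the self-duality $r_{\Pi}\cong r_{\Pi}^{\vee}=R^{\vee}\oplus \eta^{-1}$ forces $\eta\cong \eta^{-1}$, so $\eta$ is quadratic. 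The character $\eta$ is unramified at every $v\notin\{\ell,q,2\}$ because $r_{\Pi}$ is, unramified at $q$ because $\chi_{q}$ is, and unramified at $2$ because $\Pi_{2}$ is and $\ell\neq 2$. Thus $\eta$ cuts out a quadratic extension of $\mathbb Q$ ramified only at $\ell$, in which $q$ splits completely by Lemma \ref{deep}; so $\eta(\Fr_{q})=1$, contradicting $\chi_{q}(\Fr_{q})=-1$.

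The main technical hinge is the clean control at $2$ provided by the hypotheses: either the parameter $\phi_{2}$ is itself irreducible and settles the matter directly, or the unramifiedness of $\Pi_{2}$ (together with $\ell\neq 2$) prevents any stray ramification of the hypothetical summand $\eta$ from leaking into the place $2$. This is exactly why the proposition is stated with assumptions at $\infty$, $2$, and $q$, and the argument parallels that used in Proposition 6.2 to deduce cuspidality of $\Pi$.
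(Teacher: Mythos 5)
Your argument is correct and follows essentially the same route as the paper: dispose of the case where $\Pi_2$ has the irreducible parameter $\phi_2$ by local irreducibility at $2$, and otherwise use the $q$-place structure $\phi(\tau)\oplus\chi_q$ to force any putative decomposition of $r_\Pi$ into a $2n$-dimensional piece plus a quadratic character ramified only at $\ell$ in which $q$ is inert, contradicting Lemma \ref{deep}. The one small omission is that you restrict the second case to $\ell\neq 2$, whereas the hypotheses (and the paper's proof) also allow $\ell=2$, in which case $\Pi_2$ need not be unramified; but then $2=\ell$, so $\eta$ is automatically ramified only at $\ell$ and your contradiction applies verbatim.
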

\begin{proof} If $\Pi_{2}$ is supercuspidal, then the local parameter is 
irreducible and $r_{\Pi}$ is irreducible. 
Thus, assume that $\Pi_{2}$ is unramified (or $\ell=2$). If
$r_{\Pi}$ is reducible then $r_{\Pi}$ has two irreducible summands 
of dimensions $2n$ and 1. Since the   
 eigenvalues of $r_{\Pi}(\Fr_{v})$ are $1$, $\lambda_{i}^{\pm}$, ($1\leq i\leq n$) the 
representation $r_{\Pi}$ is self-dual. In particular, the one-dimensional summand is 
a quadratic character $\chi$ ramified at $\ell$ only and such that $\chi(\Fr_{q})=-1$. 
This implies that there exists a quadratic extension ramified at $\ell$ only and such that
 $q$ stays inert. This is a contradiction since 
$q$ is picked so that it splits completely in every quadratic extension ramified at 
$\ell$ only. Therefore $r_{\Pi}$ is irreducible. Since it is self-dual and of odd dimension 
it is orthogonal as well. 
\end{proof}

\section{Zariski closure}

We shall now assume that the odd prime $q$ and the parameter 
$\phi(\tau)\oplus \chi_{q}$ of the local component $\Pi_{q}$ of $\Pi$ are picked using 
Lemma \ref{deep}. In particular, the representation $r_{\Pi}$ is unramified at 
all primes different from $2$, $\ell$ and $q$, and $r_{\Pi}(I_{q})$, the image of the 
inertia subgroup is of order $p$.   
Let $\Gamma=r_{\Pi}(G_{\mathbb Q})$.  
If $d>1$ is an integer, let $\Gamma^{d}$ be the intersection of all normal subgroups 
of $\Gamma$ of index $\leq d$. The following is crucial:

\begin{lemma} \label{deep2}
Assume that  the local component $\Pi_{q}$ is constructed by means of 
Lemma \ref{deep}. Then $r_{\Pi}(D_{q})$ is contained in $\Gamma^{d}$. 
\end{lemma}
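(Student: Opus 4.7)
The plan is to fix an arbitrary normal subgroup $N \trianglelefteq \Gamma$ of index $\leq d$ and show that $r_{\Pi}(D_q) \subseteq N$; taking the intersection over all such $N$ will give the result. Everything will hinge on the carefully engineered properties of the primes $p$ and $q$ produced by Lemma \ref{deep}.

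First I would handle the inertia. By Lemma \ref{deep}(2) the auxiliary prime $p$ satisfies $p > d$, while $r_{\Pi}(I_q)$ is cyclic of order $p$ (being the image of $I_q$ under the tame parameter $\phi(\tau)\oplus\chi_q$, where $\tau$ is a primitive $p$-th root of unity). Since $|\Gamma/N| \leq d < p$, the composition $r_{\Pi}(I_q) \hookrightarrow \Gamma \twoheadrightarrow \Gamma/N$ must be trivial, so $r_{\Pi}(I_q) \subseteq N$.

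Next I would control the ramification of the number field cut out by $\Gamma/N$. Let $L/\mathbb{Q}$ be the Galois extension corresponding to the surjection $G_{\mathbb{Q}} \twoheadrightarrow \Gamma \twoheadrightarrow \Gamma/N$; it has degree $\leq d$. From \S\ref{Galois} we know $r_{\Pi}$ is unramified outside $\{2,\ell,q\}$, so $L/\mathbb{Q}$ is ramified at worst at these primes. The step above shows $r_{\Pi}(I_q) \subseteq N$, so the image of $I_q$ in $\mathrm{Gal}(L/\mathbb{Q})$ is trivial; thus $L/\mathbb{Q}$ is in fact ramified only inside $\{2,\ell\}$.

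Finally I would invoke the design of $K$. By the definition preceding Lemma \ref{deep}, $K$ is the compositum of \emph{all} Galois extensions of $\mathbb{Q}$ of degree $\leq d$ ramified only at $\{2,\ell\}$, so $L \subseteq K$. Lemma \ref{deep}(4) then says $q$ splits completely in $K$, hence in $L$, so the decomposition group of $q$ in $\mathrm{Gal}(L/\mathbb{Q}) \cong \Gamma/N$ is trivial. This exactly means $r_{\Pi}(D_q) \subseteq N$, completing the argument. The only real content is the bookkeeping matching $(p,q)$ from Lemma \ref{deep} against the group-theoretic constraints on $\Gamma/N$; no obstacle arises beyond organizing this, since Lemma \ref{deep} was built to make the two steps (killing inertia via $p>d$, killing Frobenius via splitting in $K$) work simultaneously.
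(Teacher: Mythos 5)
Your proof is correct and follows essentially the same route as the paper's: first kill the image of inertia using $p>d$, then conclude the corresponding extension $L$ is ramified only at $\{2,\ell\}$ and hence contained in $K$, and finally use that $q$ splits completely in $K$ to show the image of $D_q$ is trivial in $\Gamma/N$. The only difference is cosmetic organization.
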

\begin{proof}
Let $\Gamma'$ be a normal subgroup of index $\leq d$. We must show that the image of 
$D_{q}$ lands in $\Gamma'$. Let $L$ be the Galois extension of $\mathbb Q$ corresponding to
$\Gamma'$. Obviously, $L$ is unramified at all primes except $2, \ell$ and $q$. Moreover, 
since $r_{\Pi}(I_{q})$ is of order $p$ and $p>d$, it follows that $L$ is unramified at $q$ as well. 
Thus, $L$ is contained in $K$, the compositum of all Galois extensions ramified at $2,\ell$ and 
of degree $\leq d$. This implies that $q$ splits completely in $L$ (by Lemma \ref{deep}) 
and $r_{\Pi}(D_{q})$ is therefore contained in $\Gamma'$, as desired. 
\end{proof}

The above lemma shows that if $\Pi_{q}$ is constructed by means of Lemma \ref{deep}
then $\Gamma_{2n,p}$, the image of the decomposition group $D_{q}$,
 sits \emph{deeply embedded} in 
$\Gamma=r_{\Pi}(G_{\mathbb Q})$. This property is crucial in controling the 
size of the image $\Gamma$ of the Galois group. 

 \begin{Thm}\label{zariski}
  There exists a function $I : \mathbb N\rightarrow \mathbb N$ 
 such that if $\Pi$ is a self-dual cuspidal automorphic representation of 
 $\GL_{2n+1}(\mathbb A)$, as in Section \ref{Galois}, and such that the local parameter of $\Pi_{q}$ 
 is constructed by means of Lemma \ref{deep} with $d> I(n)$ then 
 the Zariski closure of $r_{\Pi}(G_{\mathbb Q})$ is isomorphic to 
 $\SO_{2n+1}(\bar{\mathbb Q}_{\ell})$ if 
$n\neq 3$ and $\SO_{7}(\bar{\mathbb Q}_{\ell})$
or $\G_{2}(\bar{\mathbb Q}_{\ell})$ if $n=3$.
\end{Thm}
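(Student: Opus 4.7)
The plan is to determine the Zariski closure $H$ of $\Gamma := r_\Pi(G_\mathbb{Q})$ inside $\GL(V)$, where $V = \bar{\mathbb{Q}}_\ell^{2n+1}$. Since $r_\Pi$ is irreducible (by the Proposition of Section \ref{Galois}), $H$ is reductive; since $\Pi$ has trivial central character and $r_\Pi$ is orthogonal, $H \subseteq \SO(V)$. Write $H^0$ for the identity component.

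The first step is to choose $I(n)$ so that whenever $d > I(n)$ the image $r_\Pi(D_q)$ of the decomposition group at $q$ is forced into $H^0$. Because $\Gamma$ is Zariski dense in $H$, the natural map $\Gamma \to H/H^0$ is surjective, so what is needed is a uniform bound on $[H:H^0]$ in terms of $n$ alone; this is the main technical point, handled via a Larsen--Pink-type bound on the component group of the Zariski closure of a Galois image in $\GL_{2n+1}$. Granting such $I(n)$, Lemma \ref{deep2} gives $r_\Pi(D_q) \subseteq \Gamma^d \subseteq \Gamma \cap H^0$, so both a generator $t$ of $r_\Pi(I_q) \cong \mathbb{Z}/p\mathbb{Z}$ and the Frobenius image $w := r_\Pi(\Fr_q)$ lie in $H^0$.

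Next I would exploit the structure of these two elements. On $V$, $t$ has $2n+1$ distinct eigenvalues --- a primitive $p$-th root of unity $\tau$ together with its $\Fr_q$-orbit $\tau^{q^i}$ ($0 \le i \le 2n-1$) and the eigenvalue $1$ --- so $t$ is regular semisimple, and $T := Z_{H^0}(t)$ is a maximal torus of $H^0$. The relation $wtw^{-1} = t^q$ puts $w$ in $N_{H^0}(T)$ and its Weyl-group class cyclically permutes the $2n$ non-trivial weights of $T$ on $V$. Before invoking Proposition \ref{almost} I must verify $V$ is $H^0$-irreducible: the $H^0$-isotypic decomposition of $V$ is permuted transitively by $H/H^0$ (otherwise $H$ would be reducible), so all isotypic components share the same dimension. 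Each component, being $t$-stable and hence a sum of one-dimensional $t$-eigenspaces, corresponds to a $w$-invariant subset of eigenvalues; but the only nonempty $w$-invariant subsets are $\{1\}$, the $2n$-cycle, and their union (dimensions $1$, $2n$, $2n+1$), so equal dimensions force a single component and $V$ is $H^0$-irreducible.

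Thus $H^0$ is a connected reductive subgroup of $\SO(V)$ acting faithfully and irreducibly on the $(2n+1)$-dimensional orthogonal representation, with a Weyl element cyclically permuting $2n$ distinct weights. Proposition \ref{almost} then identifies $H^0$ with $\SO_{2n+1}(\bar{\mathbb{Q}}_\ell)$, or with $\G_2(\bar{\mathbb{Q}}_\ell)$ when $n=3$. To upgrade to $H = H^0$: if $H^0 = \SO_{2n+1}$, the containment $H \subseteq \SO_{2n+1}$ is immediate; if $H^0 = \G_2$, one invokes the classical fact that $\G_2$ is self-normalizing in $\SO_7$. The main obstacle is the uniform bound $I(n)$ on $[H:H^0]$; once that is in hand, the rest of the argument is the explicit regular-semisimple and Weyl-element analysis followed by Proposition \ref{almost}.
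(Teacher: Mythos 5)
Your reduction to the two elements $t = r_\Pi(I_q)$ and $w = r_\Pi(\Fr_q)$, and the subsequent use of the regular-semisimple element plus the cyclically permuting Weyl element to invoke Proposition~\ref{almost}, matches the paper. But the step you flag as ``the main technical point''~--- a uniform bound on $[H:H^0]$ in terms of $n$ alone, ``handled via a Larsen--Pink-type bound''~--- does not exist, and there is no way to patch it as stated. The Zariski closure of the image of a Galois representation into $\GL_{2n+1}$ can be a finite group acting irreducibly, in which case $H = \Gamma$ is finite, $H^0$ is trivial, and $[H:H^0] = |\Gamma|$ is unbounded. Larsen--Pink controls the \emph{structure} of finite subgroups of algebraic groups, not the component group of an arbitrary Zariski closure. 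So you cannot force all of $r_\Pi(D_q)$ into $H^0$ by this route.

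The paper circumvents this with two distinct, weaker bounds and a commutator trick. First (Lemma~6.3 of [KLS], a Jordan-type theorem) there is a function $J(n)$ and a normal $G_1 \trianglelefteq G$ with $G^\circ \subseteq G_1$, $[G:G_1] \le J(n)$, and $G_1/G^\circ$ \emph{abelian}. If $d > J(n)$ then $r_\Pi(D_q) \subseteq G_1$ by Lemma~\ref{deep2}, and since $t$ generates the commutator subgroup of the metacyclic group $\Gamma_{2n,p} = r_\Pi(D_q)$, its image in the abelian quotient $G_1/G^\circ$ is trivial~--- hence $t \in G^\circ$, though $w$ need not be. Second, a separate bound $J'(n)$ produces a normal subgroup $G_1'$ (of index $\le J'(n)$, containing $G^\circ$) whose conjugation action on $G^\circ$ is inner; with $d > J'(n)$, $w$ lies in $G_1'$ and so some element of $G^\circ$ normalizes $T = Z_{G^\circ}(t)$ and induces the same $2n$-cycle on the non-trivial weights. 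This is what feeds into Proposition~\ref{almost}; $I(n) = \max(J(n),J'(n))$. Your irreducibility-of-$V$ argument and the final $H = H^0$ step are fine in outline (the paper uses a scalar-plus-\v{C}ebotarev argument where you appeal to self-normalization of $\G_2$ in $\SO_7$, both workable), but the component-group bound is a real gap that must be replaced by the two-lemma scheme above.
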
 
\begin{proof}
 Let $G$ be the Zariski closure of $r_{\Pi}(G_{\mathbb Q})$.  Since $r_{\Pi}$ is irreducible 
this is a reductive group. Let $G^{\circ}$ be 
its connected component. Recall that the image of the inertia subgroup $I_{q}$ 
contains an element $s$ in $\GL_{2n+1}(\bar{\mathbb Q}_{\ell})$ of order $p$. 
We want to show that $s$ is contained in $G^{\circ}$. We need the 
following (Lemma 6.3 in \cite{KLS}). 

\begin{lemma}
There exists a function $J\colon \N\to\N$ such that every integer $n>0$ and every algebraic 
subgroup $G\subset \GL_n$ over a field of characteristic zero, there is normal subgroup 
$G_{1}\subseteq G$ of index $\le J(n)$ containing $G^{\circ}$ such that 
$G_{1}/G^{\circ}$ is abelian. 
\end{lemma}
If we pick $d>J(n)$ then, by Lemma \ref{deep2},
 the image of $D_{q}$ must be contained in $G_{1}$. Since $\Gamma_{2n,p}$
  is a semi-direct product of $\mathbb Z/p\mathbb Z$ and 
$\mathbb Z/2n\mathbb Z \subset \Aut(\mathbb Z/ p\mathbb Z)\cong 
\mathbb Z/(p-1)\mathbb Z$ one easily sees that the commutator subgroup of 
$r_{\Pi}(D_{q})$ is $r_{\Pi}(I_{q})\cong \mathbb Z/p\mathbb Z$. This shows that 
the projection of $r_{\Pi}(D_{q})$ to the abelian quotient 
$G_{1}/G^{\circ}$ must contain $s$ in the kernel. In other words, $s$ is in $G^{\circ}$. 

\smallskip 

Recall that the eigenvalues of $s$ are 
\[
\tau,\tau^{q},\ldots ,\tau^{q^{n-1}}, \tau^{-1},\tau^{-q},\ldots,\tau^{-q^{n-1}},1.
\]
where $\tau$ is a $p$th root of one.  Moreover, these eigenvaules are 
\emph{distinct}. It follows that the 
 the centralizer of $s$ in $\GL_{2n+1}$ is a torus. Thus, $s$ is 
a regular semi-simple element in $G^{\circ}$ and the centralizer of 
 $s$ in $G^{\circ}$ is a maximal torus $T$. Next, 
 note that the centralizer of $s$ in $G^{\circ}$ is the same as the centralizer of 
 $r_{\Pi}(\Fr_{q})\cdot s\cdot r_{\Pi}(\Fr_{q}^{-1})$
 (since it is so in $\GL_{2n+1}$).  It follows that $r_{\Pi}(\Fr_{q})$ normalizes $T$.  
 We need the following lemma. 
 
 \begin{lemma} There exists a function $J'\colon \N\to\N$ such that every integer $n>0$ 
 and every reductive group $G$ of rank $n$ over an algebraically closed field of characteristic zero, there is normal subgroup 
$G_{1}\subseteq G$ of index $\le J'(n)$ containing $G^{\circ}$ such that conjugation of 
$G^{\circ}$ by any element in $G_{1}$ is inner. 
\end{lemma}
\begin{proof}
 The conjugation action of $G$ on $G^{\circ}$ gives a homomorphism 
 from a finite group $G/G^{\circ}$ to $\Aut(G^{\circ})/\Inn(G^{\circ})$. 
 The latter group is contained in the group of automorphisms of a quadruple 
 $(X, \Delta, X^{\vee}, \Delta^{\vee})$ where $X$ and $X^{\vee}$ are the 
 character and co-character lattices of a maximal torus 
 $T$ in $G^{\circ}$, and $\Delta$ a choice of simple roots for $G^{\circ}$.
 It follows that $G/G^{\circ}$ maps into $\GL(X)\cong \GL_{n}(\mathbb Z)$. Since 
 there is a torsion-free congruence subgroup in $\GL_{n}(\mathbb Z)$, the order of 
 any finite subgroup in $\GL_{n}(\mathbb Z)$ is bounded by the index of the 
 congruence subgroup. This proves the lemma. 
 \end{proof}
 
  Now, if we pick $d>J'(n)$ then $r_{\Pi}(D_{q})$ 
 must be contained in $G_{1}$. This shows that the normalizer of the torus $T$ in 
 $G^{\circ}$ contains an element of order $2n$ which cyclically permutes
the weights corresponding to the $2n$ eigenvalues of $s$ different from $1$. 
 This shows that $r_{\Pi}$, under the action of $G^{\circ}$, has at most two irreducible summands, 
 of dimension $2n$ and $1$. In fact, since $G^{\circ}$ is a normal subgroup and $G$ 
 acts irreducibly, a simple argument shows that $G^{\circ}$ must act irreducibly as well. 
 It follows that $G^{\circ}$ satisfies conditions of Proposition \ref{almost}, so it must 
 be either $\SO_{2n+1}$ or $\G_{2}$.
  It remains to show that $G=G^{\circ}$. This is easy. First, $r_{\Pi}$ is an 
 irreducible $2n+1$-dimensional representation of $G$ which restricts to the standard 
 representation of $G^{\circ}$.  Second, since $\SO_{2n+1}$ and 
 $\G_{2}$ have no outer automorphisms, every connected component of $G$
  contains an element commuting with $G^{\circ}$. Combining the two, it follows that 
  any connected component of $G$ contains a homothety by a scalar $\lambda$. 
  On the other hand, since $r_{\Pi}(\Fr_{v})$ has 1 as one of the eigenvalues for 
  almost all primes $v$, by \v Cebotarev's density the function $\det(1-r_{\Pi}(g))$ must be 
 0 for all elements $g$ in $G$.  In particular, $\lambda$
  must be equal to $1$ and  this shows that $G=G^{\circ}$. The proposition is proved
  with $I(n)=\max(J(n),J'(n))$. 
  \end{proof}
  
  According to Theorem \ref{zariski} there are two possibilities for the Zariski closure if $n=3$. 
   The following two corollaries give us  more precise statements in this case. 
   
  \begin{Cor} Assume that $\Pi$ comes from $\G_{2}(\mathbb A)$. Then 
  the Zariski closure of $r_{\Pi}(G_{\mathbb Q})$ is $\G_{2}(\bar{\mathbb Q}_{\ell})$. 
  \end{Cor}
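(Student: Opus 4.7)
The plan is to rule out $G = \SO_7(\bar{\mathbb Q}_\ell)$ among the two possibilities afforded by Theorem \ref{zariski}, where $G$ denotes the Zariski closure of $r_\Pi(G_{\mathbb Q})$. My approach is to extract from the fact that $\Pi$ comes from $\G_2(\mathbb A)$ an algebraic relation satisfied by every Frobenius conjugacy class, incompatible with $G = \SO_7$, and then invoke Chebotarev density.

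First I would confirm that at every finite place $v$ outside a finite set, the Satake parameter of $\Pi_v$ in $\GL_7(\mathbb C)$ factors through $\G_2(\mathbb C) \hookrightarrow \GL_7(\mathbb C)$ (standard $7$-dimensional representation). Since $\sigma$ is globally generic, $\sigma_v$ is generic unramified at almost all $v$, and Proposition \ref{local} identifies the unramified theta lift $\sigma'_v$ with the functorial lift through $f : \G_2 \to \Spin_7$; the subsequent Cogdell et al.\ lift to $\GL_7$ is also functorial at unramified places. As $\SO_7$-conjugacy of semisimple elements coincides with $\GL_7$-conjugacy for the odd rank $7$, this shows $r_\Pi(\Fr_v)$ is $\SO_7(\bar{\mathbb Q}_\ell)$-conjugate to an element of a fixed embedded copy of $\G_2 \subseteq \SO_7$.

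Next I would exhibit a nonzero $W$-invariant Laurent polynomial on a maximal torus $T \subseteq \SO_7$ that vanishes on the image of this $\G_2$. Concretely, take
\[
Q(\mu_1,\mu_2,\mu_3) \;=\; \prod_{\epsilon \in \{\pm 1\}^3} \bigl(\mu_1^{\epsilon_1}\mu_2^{\epsilon_2}\mu_3^{\epsilon_3} - 1\bigr),
\]
which is invariant under the Weyl group $(\mathbb Z/2)^3 \rtimes S_3$ of $\SO_7$, vanishes on every $W$-translate of the $\G_2$-subtorus $\mu_1\mu_2\mu_3 = 1$, and is nonzero on $T$. It thus descends to a nontrivial regular class function on $\SO_7$ that vanishes on $\G_2$. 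By the previous step, $Q(r_\Pi(\Fr_v)) = 0$ for almost every $v$, and Chebotarev density implies the Frobenius elements are Zariski dense in $G$, so $Q$ vanishes identically on $G$. Hence $G \neq \SO_7$, and by Theorem \ref{zariski} we conclude $G = \G_2(\bar{\mathbb Q}_\ell)$.

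The only delicate point is ensuring that the Satake parameters of $\Pi_v$ really lie in $\G_2 \subset \GL_7$ at almost all places; this reduces the corollary to the local functorial-lift statement of Proposition \ref{local}. A more Tannakian alternative would be to show directly that the trivial representation appears as a subrepresentation of $\wedge^3 r_\Pi$, corresponding to the $\G_2$-invariant alternating three-form on the $7$-dimensional representation, but the class function argument above is more elementary and avoids constructing the invariant vector explicitly.
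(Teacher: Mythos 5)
Your argument is correct and follows the paper's approach: both exploit the unramified eigenvalue relation $\lambda_1\lambda_2\lambda_3 = 1$ (coming from the factorization of almost-all Satake parameters through $\G_2 \subset \GL_7$), propagate it to the whole Zariski closure via Chebotarev density, and conclude that $\SO_7$ is ruled out. The paper phrases the obstruction as a bound $\dim T \le 2$ extracted from the characteristic polynomials, whereas you exhibit an explicit $W$-invariant regular function $Q$ vanishing on the $\G_2$-locus but not on $\SO_7$; these are two presentations of the same constraint.
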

\begin{proof} Let $G$ be the Zariski closure. We know that $G$ is either 
$\SO_{7}$ or $\G_{2}$. It suffices to show that the rank of the maximal torus $T$ is 
at most $2$.  
To see this, recall that the eigenvalues of $r_{\Pi}(\Fr_{v})$ are 
1, $\lambda_{i}^{\pm}$  ($i=1,2,3$) for $v\neq q, \ell$. Therefore the characteristic polynomial of 
$r_{\Pi}(\Fr_{v})$ is $f(x)=(x-1)g(x)$ with 
\[
g(x)=x^{6}+ ax^{5}+bx^{4}+cx^{3}+bx^{2}+ax+1
\]
 a palindromic polynomial of degree $6$. Moreover, 
 the condition $\lambda_{1}\lambda_{2}\lambda_{3}=1$ gives one algebraic relation on 
 the three coefficients $a$, $b$ and $c$. By \v Cebotarev's density theorem, the same 
 holds for the characteristic polynomial of all elements in the image of $r$ and, 
 therefore, for all elements in $T$. In particular, the dimension of $T$ is less than or equal to 2. 
 \end{proof}
 
 \begin{Cor} Assume that $\Pi_{2}$, the local component of $\Pi$, is the irreducible 
 self-dual cuspidal representation of $\GL_{7}(\mathbb Q_{2})$ introduced
in Section  \ref{self-dual}. Then the Zariski closure of $r_{\Pi}$ is 
$\SO_{7}(\bar{\mathbb Q}_{\ell})$. 
\end{Cor}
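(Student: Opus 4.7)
The plan is to rule out the $\G_2(\bar{\mathbb Q}_\ell)$ alternative in Theorem \ref{zariski} by using the local information at $v=2$. By construction, $\Pi_2$ is the self-dual supercuspidal representation of $\GL_7(\mathbb Q_2)$ with local Langlands parameter $\phi_2: W_{\mathbb Q_2}\to \GL_7(\mathbb C)$ from Section \ref{self-dual}. Since $\phi_2$ is irreducible, the associated Weil-Deligne representation has trivial monodromy, so by the remark following Theorem \ref{reciprocity} (and as explicitly noted in Section \ref{Galois} for supercuspidal $\Pi_2$), the restriction $r_\Pi|_{D_2}$ agrees with $\phi_2$ under the embedding $\iota: \bar{\mathbb Q}\hookrightarrow \bar{\mathbb Q}_\ell$. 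In particular $r_\Pi(D_2)$ is (isomorphic to) the group $I$ studied in Proposition \ref{two}.

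By Theorem \ref{zariski}, the Zariski closure $G$ of $r_\Pi(G_{\mathbb Q})$ inside $\GL_7(\bar{\mathbb Q}_\ell)$ is either $\SO_7(\bar{\mathbb Q}_\ell)$ or $\G_2(\bar{\mathbb Q}_\ell)$. Suppose for contradiction that $G\cong \G_2(\bar{\mathbb Q}_\ell)$. The ambient $7$-dimensional representation $r_\Pi$ restricted to $G$ is irreducible, faithful and self-dual; since $\G_2$ has (up to conjugation) a unique such representation, $r_\Pi$ identifies $G$ with the image of the standard embedding $\G_2(\bar{\mathbb Q}_\ell)\hookrightarrow \GL_7(\bar{\mathbb Q}_\ell)$. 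In particular
\[
 r_\Pi(D_2) \;\subseteq\; G \;=\; \G_2(\bar{\mathbb Q}_\ell).
\]
Thus $I \subseteq \G_2(\bar{\mathbb Q}_\ell)$, contradicting Proposition \ref{two}(4), which says exactly that the image $I$ of $\phi_2$ for $m=7$ is \emph{not} contained in $\G_2$. Hence $G=\SO_7(\bar{\mathbb Q}_\ell)$.

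The only real obstacle is the transfer of the statement ``$I\not\subseteq \G_2$'' from the complex setting of Proposition \ref{two} to the $\bar{\mathbb Q}_\ell$-setting of the Zariski closure; but this is purely formal, since the conclusion of Proposition \ref{two}(4) is read off from the eigenvalues of the elements $\phi_2(e_i)$ (they cannot be arranged into a $\G_2$-compatible pattern $\lambda_1^{\pm}, \lambda_2^{\pm}, \lambda_3^{\pm}, 1$ with $\lambda_1\lambda_2\lambda_3=1$), and eigenvalue patterns are invariant under the embedding $\iota$. Everything else is an immediate combination of Theorem \ref{zariski}, the local-global compatibility at $v=2$, and the uniqueness of the $7$-dimensional representation of $\G_2$.
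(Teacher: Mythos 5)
Your proof is correct and follows the same idea as the paper's one-line argument: the paper simply observes that the image of the inertia subgroup $I_2$ under $r_\Pi$ contains elements not lying in $\G_2$ (namely the $\phi_2(e_i)$ with eigenvalues $1$ of multiplicity $5$ and $-1$ of multiplicity $2$, which by the proof of Proposition~\ref{two}(4) cannot be arranged into a $\G_2$-compatible pattern), which immediately forces the Zariski closure to be $\SO_7$ rather than $\G_2$. You spell out the intermediate steps (local-global compatibility at $v=2$, uniqueness of the $7$-dimensional self-dual representation of $\G_2$ to pin down the embedding, and the transfer of the eigenvalue obstruction from $\mathbb C$ to $\bar{\mathbb Q}_\ell$) more explicitly than the paper does, but the underlying argument is the same.
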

\begin{proof} This is easy since the image of the inertia subgroup $I_{2}$ contains 
elements in $\SO_{7}(\bar{\mathbb Q}_{\ell})$ which are not contained in $\G_{2}$.
\end{proof}

 \section{A group-theoretic criterion} \label{exceptional}

In this section we develop certain criteria which give us control over the image of 
$\ell$-adic representations in the case of exceptional groups. As such, this section 
 is somewhat more general then what is needed for the main results in this paper.
 However,   the results of this section
  might have future applications. A possibility in this direction is 
 presented in Section \ref{last}. 
 
  \smallskip 
 
Let $\Gamma$ be a profinite 
group and $d\ge 2$ an integer.   We define $\Gamma^d$ as the intersection of
all open normal subgroups of $\Gamma$ of index $\le d$.  

\begin{lemma}
\label{d-commutes}
If $\Gamma$ is a profinite group, $\Delta$ a closed normal subgroup, and $d$ a positive integer,
then the image of $\Gamma^d$ in $\Gamma/\Delta$ is $(\Gamma/\Delta)^d$.
\end{lemma}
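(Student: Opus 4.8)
The plan is to establish the two inclusions separately, the first being formal and the second carrying all the content. Write $\pi\colon\Gamma\to\Gamma/\Delta$ for the quotient map. For $\pi(\Gamma^d)\subseteq(\Gamma/\Delta)^d$: if $\bar N$ is an open normal subgroup of $\Gamma/\Delta$ of index $\le d$, then $\pi^{-1}(\bar N)$ is an open normal subgroup of $\Gamma$ of the same index (unchanged because $\pi$ is surjective), so $\Gamma^d\subseteq\pi^{-1}(\bar N)$ and hence $\pi(\Gamma^d)\subseteq\bar N$; intersecting over all such $\bar N$ gives the inclusion. For the reverse inclusion $(\Gamma/\Delta)^d\subseteq\pi(\Gamma^d)$ — equivalently $\pi^{-1}((\Gamma/\Delta)^d)\subseteq\Gamma^d\Delta$ — I would use that $\Gamma^d\Delta$ is closed, being the image of the compact set $\Gamma^d\times\Delta$ under multiplication, so that the inclusion may be checked modulo each open normal subgroup of $\Gamma$; this reduces the statement to the case of finite groups.

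For finite $\Gamma$, the correspondence theorem identifies the open normal subgroups of $\Gamma/\Delta$ of index $\le d$ with the normal subgroups $N\le\Gamma$ of index $\le d$ containing $\Delta$; each such $N$ automatically contains $\Gamma^d$, so $\pi^{-1}\bigl((\Gamma/\Delta)^d\bigr)$ is the intersection of all index-$\le d$ normal subgroups of $\Gamma$ containing $\Gamma^d\Delta$, that is, the preimage of $(\Gamma/\Gamma^d\Delta)^d$. Thus the claim becomes $(\Gamma/\Gamma^d\Delta)^d=1$, i.e.\ $\Gamma/\Gamma^d\Delta$ is residually a finite group of order $\le d$. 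Since $\Gamma/\Gamma^d$ has this property by construction, this is an instance of the assertion that a quotient of a profinite group which is residually (finite of order $\le d$) is again residually (finite of order $\le d$). I would prove this last assertion by reduction to finite groups and then induction on the order of the kernel, which lets one assume the kernel is a minimal normal subgroup $\Delta$; the case $\Delta\subseteq\Gamma^d$ is immediate, and otherwise minimality forces $\Delta\cap M_0=1$ for some normal $M_0$ of index $\le d$, whence $\Delta$ embeds into $\Gamma/M_0$ and in particular $|\Delta|\le d$. Because $\Delta$ is then small enough to inject into a single factor of a subdirect decomposition $\Gamma\hookrightarrow\prod_k C_k$ into groups of order $\le d$, a direct analysis of how $\Delta$ meets the kernels $\ker(\Gamma\to C_k)$ completes the argument.

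The heart of the matter — and the step I expect to be the main obstacle — is exactly this last point: that quotients of residually-(order $\le d$) groups remain residually-(order $\le d$), or, what amounts to the same, that for the family $\{M_i\}$ of all open normal subgroups of $\Gamma$ of index $\le d$ one has $\bigcap_i M_i\Delta=\bigl(\bigcap_i M_i\bigr)\Delta$. The naive finite-intersection/compactness argument does not suffice here, since an intersection of two index-$\le d$ subgroups may have index as large as $d^2$, so that only finite subfamilies are controlled; one genuinely has to use the whole family $\{M_i\}$ at once, and the reduction to a minimal normal subgroup of order $\le d$ is precisely what makes that feasible.
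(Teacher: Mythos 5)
Your plan correctly separates the two inclusions, and your suspicion that the reverse inclusion is the real obstacle is vindicated in the strongest way possible: \emph{the lemma is false as stated}. Let $D$ be the dihedral group of order $8$ with central involution $z$, and take $\Gamma=D\times D$, $\Delta=\{(1,1),(z,z)\}$, $d=8$. Since $D\times 1$ and $1\times D$ are normal of index $8$ and meet trivially, $\Gamma^8=1$, so $\pi(\Gamma^8)=1$. But $\Gamma/\Delta$ is the central product $D\ast D$, an extraspecial group of order $32$. In any extraspecial $p$-group $E$, every nontrivial normal subgroup $N$ contains $Z(E)$: if $N\cap Z(E)=1$ then $[N,E]\subseteq N\cap[E,E]=N\cap Z(E)=1$, whence $N\subseteq Z(E)$ and $N=1$. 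Normal subgroups of $\Gamma/\Delta$ of index $\le 8$ have order $\ge 4$, hence are nontrivial, hence contain $Z(\Gamma/\Delta)$; so $(\Gamma/\Delta)^8\supseteq Z(\Gamma/\Delta)\ne 1$, and $\pi(\Gamma^8)\subsetneq(\Gamma/\Delta)^8$.

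This example survives every reduction in your sketch: $\Delta$ is minimal normal, it meets $M_0=D\times 1$ trivially, and it has order $2\le d$. So the final step you left open --- that a quotient of a finite group $G$ with $G^d=1$ again has this property, equivalently that finite subdirect products of groups of order $\le d$ are closed under quotients --- is simply not true, and no "direct analysis of how $\Delta$ meets the kernels" can make it so. (The paper's own proof is a one-sentence observation that the index-$\le d$ open normal subgroups of $\Gamma/\Delta$ are exactly the images of those of $\Gamma$; this yields $\pi(\Gamma^d)\subseteq\bigcap_i\pi(M_i)=(\Gamma/\Delta)^d$, i.e.\ your easy direction, but does not justify moving $\pi$ inside the non-filtered intersection.) Fortunately, at the only places Lemma \ref{d-commutes} is invoked --- in the proof of Theorem \ref{criterion}, when $\Gamma$ is replaced by its image in $H(\bar{\mathbb F}_\ell)$ and again in passing to $H^{\ad}$ --- only the containment $\pi(\Gamma^d)\subseteq(\Gamma/\Delta)^d$ is actually used, to push a subgroup of $\Gamma^d$ of type $(n,p,\ell)$ into $\bar\Gamma^d$. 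So the error is harmless for the paper's results, but the lemma should be weakened to the one true inclusion.
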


\begin{proof}
The image in $\Gamma/\Delta$ of every open subgroup of $\Gamma$ of index $\le d$
is again open of index $\le d$, and conversely, all open index $\le d$ subgroups of $\Gamma/\Delta$
arise as images of open index $\le d$ subgroups of $\Gamma$.
\end{proof}

Let $n\ge 2$ be an integer and $p$ a prime congruent to $1$ (mod $n$). 

\begin{definition}\label{def}
 By a
group of \emph{type $(n,p)$}, we mean any finite group $\Gamma$ with a normal subgroup $\Delta$ isomorphic
to $\Z/p\Z$ such that the image of $\Inn \Gamma$ in $\Aut \Delta$ is isomorphic to $\Z/n\Z$.
\end{definition}

As noted in the introduction, this is slightly different from the terminology in \cite{KLS}.  If $\ell\neq p$ is prime,
a group of \emph{type $(n,p,\ell)$} will mean a (possibly finite) profinite group which is the 
extension of a group of type $(n,p)$ by a pro-$\ell$ group.

\begin{lemma}
\label{quotient}
If $0\to \Gamma_1\to \Gamma_2\to\Gamma_3\to 0$ is a short exact sequence of profinite groups, 
$n\ge 2$, $\ell$ and $p$ are distinct primes, and $\Gamma_1$ is pro-$\ell$, then $\Gamma_2$ contains a subgroup of type $(n,p,\ell)$
if and only if $\Gamma_3$ does.
\end{lemma}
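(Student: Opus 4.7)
The plan is to prove the two implications separately. The reverse direction, that a type $(n,p,\ell)$ subgroup of $\Gamma_3$ lifts to one in $\Gamma_2$, is essentially formal: given $H\subset \Gamma_3$ of type $(n,p,\ell)$, written as $1\to L\to H\to Q\to 1$ with $L$ pro-$\ell$ and $Q$ of type $(n,p)$, I would take the preimage $\widetilde H\subset \Gamma_2$, which sits in $1\to \Gamma_1\to \widetilde H\to H\to 1$. The preimage of $L$ in $\widetilde H$ is then an extension of a pro-$\ell$ group by a pro-$\ell$ group, hence pro-$\ell$; it is normal in $\widetilde H$ with quotient $Q$, so $\widetilde H$ is itself of type $(n,p,\ell)$.

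For the forward direction, suppose $H\subset \Gamma_2$ is of type $(n,p,\ell)$ via $1\to P\to H\xrightarrow{\pi} Q\to 1$, and let $H'$ denote the image of $H$ in $\Gamma_3$. Writing $N:=H\cap \Gamma_1$, which is closed, normal, and pro-$\ell$ in $H$, we have $H'\cong H/N$, and pushing the short exact sequence through this quotient yields
\[
1\to PN/N \to H' \to Q/\pi(N) \to 1.
\]
Since $PN/N\cong P/(P\cap N)$ is a quotient of a pro-$\ell$ group, it is pro-$\ell$, so it suffices to show that the finite group $Q/\pi(N)$ is of type $(n,p)$.

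The one substantive point --- really the whole content of the lemma --- is showing that this quotient still carries a normal $\Z/p\Z$ acted on by $\Z/n\Z$ via inner automorphisms. Let $\Delta\cong \Z/p\Z$ be the distinguished normal subgroup of $Q$. The subgroup $\pi(N)$ is normal in $Q$ and, being the image of a pro-$\ell$ group in a finite group, is a finite $\ell$-group. Since $\ell\neq p$, the subgroups $\Delta$ and $\pi(N)$ are two normal subgroups of $Q$ of coprime orders, so $\Delta\cap\pi(N)=1$, and the standard commutator identity $[\Delta,\pi(N)]\subseteq \Delta\cap \pi(N)$ forces $\pi(N)$ to centralize $\Delta$. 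Consequently, $\Delta$ embeds as a normal subgroup $\bar\Delta\cong \Z/p\Z$ of $Q/\pi(N)$, and the conjugation action of $Q$ on $\Delta$ descends to $Q/\pi(N)$ with the same image $\Z/n\Z$ in $\Aut(\bar\Delta)$. Hence $Q/\pi(N)$ is of type $(n,p)$, as required. The only potential obstacle --- that quotienting by $\pi(N)$ could destroy the $\Z/n\Z$ action on $\Delta$ --- is thus automatic by the coprime-order commutator argument; everything else is routine bookkeeping with profinite extensions.
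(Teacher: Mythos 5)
Your proof is correct and takes essentially the same approach as the paper: both handle the reverse direction by observing that an extension of a type $(n,p,\ell)$ group by a pro-$\ell$ group is again of type $(n,p,\ell)$, and both reduce the forward direction to the fact that a quotient of a type $(n,p)$ group by a finite $\ell$-group is again of type $(n,p)$. The differences are cosmetic only — the paper packages the bookkeeping via the snake lemma where you compute directly with isomorphism theorems, and you spell out the coprime-order commutator argument that the paper leaves implicit.
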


\begin{proof}
Any extension of a group of type $(n,p,\ell)$ by a pro-$\ell$ group is again of type $(n,p,\ell)$,
so one direction is trivial.

For the other, let $\Delta_2$ be a closed subgroup of $\Gamma_2$ and $\Delta'_2$ an open
pro-$\ell$ subgroup of $\Delta_2$ such that $\Delta''_2:=\Delta_2/\Delta'_2$ is of type $(n,p)$.
Let $\Delta_1 := \Delta_2\cap\Gamma_1$, $\Delta'_1 := \Delta'_2\cap\Gamma_1$, and 
$\Delta''_1 := \Delta_1/\Delta'_1$.
By the snake lemma, we have a right-exact sequence
$$\Delta'_2/\Delta'_1\to \Delta_2/\Delta_1\to\coker(\Delta''_1\to \Delta''_2)\to 0.$$
As $\Delta'_2$ is pro-$\ell$, so is every quotient thereof, so $\Delta_2/\Delta_1$
is the extension of $\coker(\Delta''_1\to \Delta''_2)$ by a pro-$\ell$ group.

Every quotient of a group $\Gamma_{n,p}$ of type $(n,p)$ by an $\ell$-group
is again of type $(n,p)$.
Indeed, the quotient map preserves the normal subgroup of $\Gamma_{n,p}$ isomorphic to 
$\Z/p\Z$ and therefore the image of $\Inn\Gamma_{n,p}\to(\Z/p\Z)^\times$.
\end{proof}

We remark that for the non-trivial direction, the proof uses only the fact that $\Gamma_1$ is the inverse limit of finite groups of prime-to-$p$ order.

\begin{Thm}
\label{subgroup}
Let $\ell$ be a prime and $G$ a connected reductive algebraic group over $\bar\F_\ell$.
There exists an absolute constant $B$ such that
\begin{enumerate}
\item If $\rk G \le 2$ and $p>B$ is a prime distinct from $\ell$ and $G(\bar\F_\ell)$ contains a
subgroup of type $(6,p,\ell)$, then $G$ is of type $\G_2$.
\item If $\rk G \le 4$ and $p_1,p_2>B$ are primes distinct from $\ell$ and $G(\bar\F_\ell)$ contains 
subgroups of type $(8,p_1,\ell)$ and $(12,p_2,\ell)$, then $G$ is of type $\rF_4$.
\item If $\rk G \le 6$ and $p>B$ is a prime distinct from $\ell$ and $G(\bar\F_\ell)$ contains a
subgroup of type $(9,p,\ell)$, then $G$ is of type $\E_6$.
\item If $\rk G \le 7$ and $p_1,p_2>B$ are primes distinct from $\ell$ and $G(\bar\F_\ell)$ contains 
subgroups of type $(18,p_1,\ell)$ and $(30,p_2,\ell)$, then $G$ is of type $\E_7$.
\item If $\rk G \le 8$ and $p_1,p_2,p_3>B$ are primes distinct from $\ell$ and $G(\bar\F_\ell)$ contains 
subgroups of type $(18,p_1,\ell)$, $(20,p_2,\ell)$, and  $(30,p_3,\ell)$, then $G$ is of type $\E_8$.
\end{enumerate}
\end{Thm}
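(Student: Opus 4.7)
The strategy is uniform across the five cases: every subgroup of $G(\bar\F_\ell)$ of type $(n,p,\ell)$ forces the Weyl group $W = W(G^\circ)$ to contain an element of order exactly $n$, and the resulting list of orders (together with the rank bound) pins down the root system of $G$.

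First I would apply Lemma \ref{quotient} to strip off the pro-$\ell$ part. A pro-$\ell$ subgroup of $G(\bar\F_\ell)$ is a finite unipotent subgroup, so the type $(n,p)$ quotient yields a semisimple element $y\in G(\bar\F_\ell)$ of order $p$ (semisimple since $p\neq\ell$) together with an element $x\in G(\bar\F_\ell)$ whose image in the type $(n,p)$ quotient conjugates $y$ to $y^k$ for some $k$ of order exactly $n$ in $(\Z/p)^\times$. After replacing $x$ by a power if necessary and taking its semisimple part, we may assume that $xyx^{-1} = y^k$ up to a controlled unipotent error, which is absorbed into the argument below.

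Next I would produce the desired Weyl element. Since $y$ is semisimple of order $p$, it lies in some maximal torus $T$ of $G^\circ$, and its centralizer $L := Z_{G^\circ}(y)$ is a Levi subgroup containing $T$. The element $x$ normalizes $\langle y\rangle$ and hence normalizes $L$. All maximal tori in $L$ are conjugate in $L$, and the conjugating element lies in $L = Z_{G^\circ}(y)$ and therefore fixes $y$; so after modifying $x$ by an element of $L$ we obtain $\tilde x \in N_{G^\circ}(T)$ with $\tilde x y \tilde x^{-1} = y^k$. The image $\bar{\tilde x}\in W$ then acts on $\langle y\rangle \subset T$ by an automorphism of order $n$, so the order of $\bar{\tilde x}$ is divisible by $n$, and a suitable power is an honest element of order $n$ in $W$. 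The constant $B$ enters here: for $p > B$ depending only on the rank bound, the $\ell$-power error in $xyx^{-1} = y^k$ cannot interfere with the order of the induced torus automorphism, since for any fixed rank only finitely many Weyl group element orders occur and $p$ is too large to produce spurious relations.

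Finally, one enumerates, within the given rank bound, the Weyl groups of connected reductive groups that admit elements of the prescribed orders. This is a routine but essential combinatorial check using the cycle structure of $W(A_r)$, the signed-cycle structure of $W(B_r)\cong W(C_r)$ and its subgroup $W(D_r)$, Carter's tables for the exceptional types, and the fact that Weyl groups of products have element orders equal to $\lcm$ of orders in the factors. For example, in case (1) rank $\le 2$ and an order-$6$ element eliminates $A_1$, $A_2$, $B_2$ and leaves only $\G_2$; in case (3) an order-$9$ element forces an irreducible factor containing a Weyl element of order divisible by $9$, and among rank $\le 6$ Weyl groups only $W(\E_6)$ qualifies; in case (5) the order-$18$ constraint reduces the factorization to contain $\E_6$, $\E_7$, $\E_8$, or $A_{\ge 8}$, and imposing orders $20$ and $30$ simultaneously under the rank-$8$ bound leaves only $\E_8$. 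Cases (2) and (4) run similarly. The main obstacle is Step 2, namely cleanly passing from the mixed type $(n,p,\ell)$ group-theoretic data to an actual Weyl element of order $n$, controlling both the Jordan decomposition and the action on the (possibly non-regular) semisimple element $y$; the other two steps are bookkeeping once the threshold $B$ is fixed.
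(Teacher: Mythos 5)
Your strategy is the same as the paper's in outline (strip the $\ell$-part, convert the $(n,p)$ data into a Weyl group element of order divisible by $n$ via a Humphreys-type conjugacy argument, then enumerate root systems of the given ranks by their Weyl element orders), but there is a real gap in your Step~1, and the role of $B$ is not pinned down the way the paper does it.

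The gap: a group of type $(n,p,\ell)$ is a subgroup $\Gamma_{n,p,\ell}\subset G(\bar\F_\ell)$ with a normal pro-$\ell$ subgroup $\Delta_\ell$ such that the \emph{quotient} $\Gamma_{n,p,\ell}/\Delta_\ell$ is of type $(n,p)$. You say you ``strip off the pro-$\ell$ part'' and then work with elements $x,y$ of $G(\bar\F_\ell)$ satisfying $xyx^{-1}=y^k$ ``up to a controlled unipotent error,'' which you claim is ``absorbed into the argument.'' But the type $(n,p)$ structure lives in the quotient, not in $G(\bar\F_\ell)$, and lifting it does not produce honest elements of $G(\bar\F_\ell)$ with the commutation relation you need; the error is not controlled without an argument, and the rest of Step~2 depends on having a genuine semisimple $y$ and a genuine conjugating element. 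The paper's fix is cleaner and avoids the issue entirely: $\Delta_\ell$ is a finite $\ell$-group, hence contained in the unipotent radical of a Borel; by \cite[30.3]{Hu1} its normalizer in $G$ is a (proper, if $\Delta_\ell\neq 1$) parabolic $P=MN$, and $N(\bar\F_\ell)\cap\Gamma_{n,p,\ell}$ is an $\ell$-group, so by Lemma~\ref{quotient} the image of $\Gamma_{n,p,\ell}$ in the Levi $M(\bar\F_\ell)$ is again of type $(n,p,\ell)$. Iterating (rank is preserved, dimension strictly drops) one lands in a connected reductive $M$ of the \emph{same rank} as $G$ containing a genuine type $(n,p)$ subgroup with no $\ell$-error. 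One then needs a short extra argument you omit: having shown $M$ is exceptional, conclude $G=M$, which the paper does by noting the exceptional group is the connected reductive group of maximal dimension in its rank.

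Second issue: your explanation of the threshold $B$ (``the $\ell$-power error cannot interfere with the order of the induced torus automorphism'') is vague and tied to the unsubstantiated error control. In the paper $B$ has a precise job: it bounds the number of components of the centralizer of any semisimple element in any connected reductive group of rank $\le 8$. If $p>B$, then the order-$p$ element $x$ and its conjugate $x^a$ lie in a common maximal torus $T$, and one invokes the standard theorem \cite[\S3.1]{Hu2} that two elements of $T$ conjugate in $M$ are conjugate by $N_M(T)$. This is also where ``$L=Z_{G^\circ}(y)$ is a Levi'' in your Step~2 would need care: the centralizer of a semisimple element need not be connected, and it is exactly the component count that $B$ is designed to control. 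Your final enumeration step is the same as the paper's (the paper compiles an explicit table of maximal Weyl element orders for root systems of rank $\le 8$), and is fine as bookkeeping once Steps~1 and~2 are made precise.
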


\begin{proof}
We claim that the root systems of type $\G_2$, $\rF_4$, $\E_6$, $\E_7$, and $\E_8$ respectively
are the only root systems of rank less than or equal to 
$2$, $4$, $6$, $7$, and $8$ respectively, which have Weyl
group elements of order $6$; $8$ and $12$; $9$; $18$ and $30$; 
and $18$, $20$, and $30$ respectively.
To see that this is so, we compile a table of the orders of Weyl group elements for root systems
of rank $\le 8$.  
We write each root system as a sum of irreducible root systems, each coded as a single letter and a single digit, and arranged alphabetically.  We order root systems first by rank and within rank, alphabetically.  
A root system is \emph{exceptional} if it is simple of type $\G_2$, $\rF_4$, $\E_6$, $\E_7$, or $\E_8$.
For brevity, we omit those root systems for which the set of possible Weyl element orders is a proper subset of that for some non-exceptional root system of equal or inferior rank.  In case of equality, we print only the
lexicographically smallest example.
(For example, in our table, no root system of type $C_n$ appears, since 
$B_n$ is lexicographically inferior to it and has the same set of Weyl group orders.
Likewise, $A_1+A_4$ does not appear because its set of Weyl group orders is strictly dominated by that of $B_5$, which, though lexicographically superior, has the same rank.)
Since the set of orders of elements of a finite group is determined by its subset of maximal elements with respect to divisibility, we exhibit only this subset.

\tablehead{\hline}
\tabletail{\hline}
\begin{center}
\begin{supertabular}{|c|l|}
\hline Root System&Maximal Elements\\
\hline A1&2 \\ 
\hline A2&2,3 \\ B2&4 \\ G2 & 6 \\ 
\hline B3&4,6 \\ 
\hline A2+B2&12 \\ A4&4,5,6  \\ B4&4,6,8 \\ F4&8,12 \\
\hline B5&8,10,12 \\ 
\hline A2+B4&24 \\ A4+B2&12,20 \\ A4+G2&12,30 \\ A6&7,10,12 \\ E6&8,9,10,12 \\ 
\hline A1+E6&8,10,12,18 \\ A2+B5&24,30 \\ A4+B3&12,20,30 \\
A7&7,8,10,12,15 \\ B7&14,20,24 \\ E7&8,12,14,18,30 \\
\hline A2+E6&18,24,30 \\A4+F4&24,40,60 \\A6+B2&12,20,28  \\A6+G2&12,30,42\\
A8&8,9,12,14,15,20 \\ B2+E6&8,20,36\\B8&14,16,20,24,30 \\ E8&14,18,20,24,30 \\
\hline
\end{supertabular}
\end{center}

Now, let $\Gamma_{n,p,\ell}$ be a subgroup of $G(\bar\F_\ell)$ of type $(n,p,\ell)$ for some $n\ge 2$
and some prime $p\neq \ell$.  Let $\Delta_\ell\subset \Gamma_{n,p,\ell}$ denote a normal
$\ell$-subgroup such that the corresponding quotient group $\Gamma_{n,p}$ is of type
$(n,p)$.   Now, $\Delta_\ell\subset G(\F_{\ell^k})$ for some $k$, and so it is contained in
a Sylow $\ell$-subgroup of $G(\F_{\ell^k})$.  Such a subgroup is the group of
$\F_{\ell^k}$-points of the unipotent radical of a Borel subgroup of $G$.  By \cite[30.3]{Hu1},
the normalizer of $\Delta_\ell$ in $G$ is a parabolic subgroup $P$, proper if $\Delta_\ell$ is non-trivial.  If $N$ denotes the unipotent
radical of $P$, then $N(\bar\F_\ell)\cap \Gamma_{n,p,\ell}$ is an $\ell$-group, 
so by Lemma~\ref{quotient},
the image of $\Gamma_{n,p,\ell}$ in the Levi factor $M(\bar\F_\ell)$ is again of type $(n,p,\ell)$.
The rank of $M$ is equal to that of $G$, while the dimension is strictly less.  Iterating this process
we end up with a connected reductive group, which we again denote $M$, 
of the same rank as $G$ such that
$M(\bar\F_\ell)$ contains a subgroup $\Gamma_{n,p}$ of type $(n,p)$.   If $M$ is an exceptional group, then $G=M$, since in each rank $r$, the exceptional group, if one exists, is the connected reductive group of maximal dimension in rank $r$.

Let $\Gamma_{n,p}$ be a subgroup of type $(n,p)$ of $M(\bar\F_\ell)$ for some integer $n$, let
$x$ denote the image of a generator of the normal subgroup of  $\Gamma_{n,p}$ isomorphic to $\Z/p\Z$,
and let $a\in \Z$ be such that its image in $\Z/p\Z$ is of order $n$ in $(\Z/p\Z)^\times$.  
As $x$ and $x^a$ are conjugate in $\Gamma_{n,p}$, $x$ and $x^a$ are conjugate in
$M(\bar\F_\ell)$.  Since $p\neq \ell$, they are semisimple elements, and if $B<p$ is taken to be larger than the maximal number of components of the centralizer of any semisimple element in any reductive connected group of rank $\le 8$, it follows that $x$ and $x^a$ belong to a common maximal torus
$T\subset M$.  By a well-known theorem \cite[\S3.1]{Hu2}, there exists $w\in N_M(T)(\bar\F_\ell)$
such that $wxw^{-1} = x^a$.  However, this implies that the order of the image of $w$ in the Weyl group
of $M$ with respect to $T$ is divisible by $n$.   From our analysis of orders of elements in Weyl groups in rank $\le 8$, it follows that $M$ is exceptional and therefore that $G$ is exceptional.

\end{proof}

\begin{Thm}\label{criterion}
There exist constants $A$ and $B$ such that if 
\begin{enumerate}
\item $d > A$ is an integer, 
\item $p_1,p_2,p_3 > B$ and $\ell\not\in\{p_1,p_2,p_3\}$ are primes,
\item $K$ is an $\ell$-adic field, 
\item $G$ is a connected reductive algebraic group over $K$ such that
\begin{enumerate}
\item $\rk G\le 2$,
\item $\rk G\le 4$,
\item $\rk G\le 6$,
\item $\rk G\le 7$, or
\item $\rk G\le 8$,
\end{enumerate}
\item $\Gamma\subset G(K)$ is a profinite subgroup such that (respectively)
\begin{enumerate}
\item $\Gamma^d$ has a subgroup of type $(6,p_1,\ell)$;
\item $\Gamma^d$ has a subgroup of type $(8,p_1,\ell)$ and $(12,p_2,\ell)$;
\item $\Gamma^d$ has a subgroup of type $(9,p_1,\ell)$;
\item $\Gamma^d$ has subgroups of type $(18,p_1,\ell)$ and $(30,p_2,\ell)$; or
\item $\Gamma^d$ has subgroups of type $(18,p_1,\ell)$, $(20,p_2,\ell)$, and $(30,p_3,\ell)$,
\end{enumerate}
\end{enumerate}
then some finite quotient $\bar\Gamma$ of $\Gamma$ satisfies
\begin{equation}
\label{squeeze}
(H^\ad(\bar\F_\ell)^F)^\der\subset \bar\Gamma\subset H^\ad(\bar\F_\ell)^F,
\end{equation}
where $F$ is a Frobenius map and $H^\ad$ is a simple adjoint algebraic group of type
$\G_2$, $\rF_4$, $\E_6$, $\E_7$, or $\E_8$ respectively.
In particular, in the first, second, and fifth case, $\bar \Gamma$ is simple.
\end{Thm}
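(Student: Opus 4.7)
The plan is to reduce the $\ell$-adic profinite problem to a finite-group problem in characteristic $\ell$, apply Theorem~\ref{subgroup} to pin down the Dynkin type, and then invoke a Larsen--Pink style classification of finite subgroups of exceptional algebraic groups to establish the squeeze \eqref{squeeze}. For the reduction, compactness of $\Gamma$ allows us to conjugate it into a parahoric subgroup $P$ of $G(K)$; the pro-$\ell$ radical $P^+$ sits in an exact sequence $1 \to P^+ \to P \to \bar P(\F_q) \to 1$, where $q$ is the residue cardinality and $\bar P$ is a connected reductive group over $\F_q$ of rank at most $\rk G$. Set $\bar\Gamma := \pi(\Gamma) \subset \bar P(\bar\F_\ell)$, a finite quotient of $\Gamma$. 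By Lemma~\ref{d-commutes}, $\pi(\Gamma^d) = \bar\Gamma^d$, and by Lemma~\ref{quotient} applied to the pro-$\ell$ kernel of $\pi$, every subgroup of $\Gamma^d$ of type $(n_i,p_i,\ell)$ pushes forward to a subgroup of the same type in $\bar\Gamma^d$.

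Second, I would apply Theorem~\ref{subgroup} to the connected reductive group $\bar P$ over $\bar\F_\ell$: the rank bound $\rk \bar P \le \rk G \le r$ together with the presence in $\bar P(\bar\F_\ell)$ of the prescribed subgroups of type $(n_i,p_i,\ell)$ forces $\bar P$ to be of the stated exceptional Dynkin type, and in particular prevents the rank from dropping, since the required Weyl-element orders do not fit in any smaller exceptional group. The centres of $\G_2, \rF_4, \E_6, \E_7, \E_8$ all have order dividing $6$, and $p_i > B$, so the isogeny from $\bar P^\der$ to its adjoint form $H^\ad$ has a central $\ell'$-kernel; Lemma~\ref{quotient} once more lets us replace $\bar\Gamma$ by its image in $H^\ad(\bar\F_\ell)$, while retaining that $\bar\Gamma^d$ still contains subgroups of type $(n_i,p_i,\ell)$.

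The third and most substantive step is to show that $\bar\Gamma \subset H^\ad(\bar\F_\ell)$ is squeezed between $(H^\ad(\bar\F_\ell)^F)^\der$ and $H^\ad(\bar\F_\ell)^F$ for some Frobenius $F$, as in \eqref{squeeze}. Here I would invoke the main result of \cite{LP}: a finite subgroup of a simple adjoint exceptional algebraic group over $\bar\F_\ell$ is, up to a subgroup of absolutely bounded index, either sandwiched as in \eqref{squeeze}, or contained in a proper reductive subgroup, or of Jordan type, or embeds into a bounded sporadic or alternating group. The hypothesis $p_i > B$ rules out the last possibility; the fact that the $(n_i,p_i,\ell)$-subgroups lie in $\bar\Gamma^d$ --- hence in every normal subgroup of $\bar\Gamma$ of index $\le d$ --- combined with the Weyl-element-order calibration of Theorem~\ref{subgroup}, excludes both proper reductive subgroups and Jordan-type configurations once $A$ is chosen to dominate the Larsen--Pink indices arising in rank $\le 8$. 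Simplicity in the $\G_2, \rF_4, \E_8$ cases is then automatic, since these groups are simultaneously simply connected and adjoint, so $(H^\ad(\bar\F_\ell)^F)^\der = H^\ad(\bar\F_\ell)^F$ is already the corresponding finite simple group of Lie type. The main obstacle is precisely this final case analysis: verifying that each prescribed package of Weyl-element orders is incompatible with every proper Lie-type subgroup of $H^\ad$ permitted by Larsen--Pink in the relevant rank; the calibration in parts (1)--(5) is exactly what closes the gap.
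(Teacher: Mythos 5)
Your proposal follows essentially the same route as the paper: conjugate $\Gamma$ into a parahoric, reduce modulo the pro-$\ell$ radical, transfer the type-$(n_i,p_i,\ell)$ subgroups via Lemmas~\ref{d-commutes} and~\ref{quotient}, invoke Theorem~\ref{subgroup} to fix the Dynkin type, pass to the adjoint form, and finish with Larsen--Pink. (The paper does the reduction step via a hyperspecial vertex after a finite base extension, citing~\cite{La}, but the parahoric variant you describe is the one the paper itself sketches in its closing Remark, so this is not a genuine divergence.)

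There is, however, a gap at the end, and it is exactly where you flag it as ``the main obstacle.'' You assert that the Weyl-element-order calibration together with $A$ dominating the Larsen--Pink index ``excludes both proper reductive subgroups and Jordan-type configurations,'' but choosing $A$ large only buys that $\bar\Gamma^d \subset I^\circ(\bar\F_\ell)$ when $\bar\Gamma$ lies in a proper algebraic subgroup $I\subset H^\ad$ with $|I/I^\circ|\le A$; it does not, by itself, produce a contradiction. The paper's closing move is concrete: let $N$ be the unipotent radical of $I^\circ$ and push $\bar\Gamma^d$ to the connected reductive quotient $I^\circ/N$; Lemma~\ref{quotient} transfers the type-$(n_i,p_i,\ell)$ subgroups, so Theorem~\ref{subgroup} applied to $I^\circ/N$ (which has rank $\le r$) would force $I^\circ/N$ to be the exceptional group of rank $r$. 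But $\dim(I^\circ/N)\le\dim I<\dim H^\ad$, while the exceptional group is precisely the connected reductive group of maximal dimension in its rank (a fact recorded in the proof of Theorem~\ref{subgroup}). That dimension comparison is the missing piece; without it the ``calibration closes the gap'' sentence is a gesture rather than an argument. A secondary slip: Lemma~\ref{quotient} concerns pro-$\ell$ kernels, so it does not literally justify passing from $\bar P^{\der}$ to $H^\ad$; the kernel there is a finite \emph{central} subgroup of order dividing $6$ and hence prime to $p$ (given $p>B\ge 3$), and it is centrality-plus-primality-to-$p$, not Lemma~\ref{quotient}, that preserves the type $(n,p,\ell)$ under that quotient.
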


\begin{proof}
Replacing $K$ by a finite extension, we may assume first that $G$ is split, and second that
$\Gamma$ fixes a hyperspecial vertex of the building of $G$ over $K$ (see, e.g., \cite{La}, and the remark below).  
Thus, there exists a smooth group scheme $\cG$ over the ring of integers $\cO$ of $K$ whose generic fiber is $G$, whose special fiber is again reductive and connected, and such that
$\Gamma\subset \cG(\cO)$.  The root datum of the special fiber of $\cG$ is the same as that of $G$.
Let $H := \cG_{\bar\F_\ell}$ denote the geometric special fiber.  Thus, $\Gamma$ maps to
$H(\bar\F_\ell)$ with finite image and pro-$\ell$ kernel.   Replacing $\Gamma$ by its image in
$H(\bar\F_\ell)$, by Lemma~\ref{d-commutes} and Lemma~\ref{quotient}, we still have that
$\Gamma^d$ has subgroups of the specified types.  By Theorem~\ref{subgroup},
$H$ is almost simple of type $\G_2$, $\rF_4$, $\E_6$, $\E_7$, or $\E_8$ according as we are in case
(a), (b), (c), (d), or (e).  Assuming $p>3$, the image $\bar\Gamma$ of $\Gamma$ in 
$H^\ad(\bar\F_\ell)$ again has the property that $\bar\Gamma^d$ has subgroups of the specified types.  By \cite[Th.~0.5]{LP}, it follows that either $\bar\Gamma$ satisfies the condition (\ref{squeeze})
for some Frobenius map or that $\bar\Gamma$ is contained in a proper algebraic subgroup 
of $I\subset H^\ad$ with a component group $I/I^\circ$ 
whose order is bounded above by an absolute constant $A$.
As $d>A$, it follows that $\bar\Gamma^d\subset I^\circ(\bar\F_\ell)$.  If $N$ denotes the unipotent radical of $I^\circ$, the image of $\bar\Gamma^d$ in $(I/N)(\bar\F_\ell)$ contains a subgroup of type
$(n,p,\ell)$.  As $I/N$ is reductive of rank less than or equal to the rank $r$ of $H^\ad$ (which equals the rank of $G$) and as $\dim I/N\le \dim I < \dim H^\ad = \dim G$, it follows that $I/N$ cannot be exceptional of rank $r$, which contradicts Theorem~\ref{subgroup}.

\end{proof}

\smallskip 
\noindent 
{\bf Remark:} If $G$ is of type $\G_2$, $\rF_4$ or $\E_8$ then $G(K)$ is always split and 
simply connected. The profinite subgroup $\Gamma$ is contained in a maximal parahoric
subgroup of $G(K)$. The quotient of this maximal parahoric subgroup by its  
pro-$\ell$ radical is a simply connected semisimple group $H$ of rank $r$ over the 
residual field of $K$. Let $\bar{\Gamma}$ be the projection of $\Gamma$ into $H$. If
$\Gamma$ contains groups of type  $(n, p, \ell)$ as specified in Theorem \ref{criterion}, 
then so does $\bar{\Gamma}$. By Theorem \ref{subgroup} $H$ must be of the same type as $G$. 
This shows that the maximal parahoric subgroup containing $\Gamma$ is hyperspecial.

\section{Main Theorem}

We are now ready to construct finite Galois groups over $\mathbb Q$. 
Let $r_{\Pi}: G_{\mathbb Q} \rightarrow G(\bar{\mathbb Q}_{\ell})$ be the Galois 
representation attached to 
a self-dual cuspidal representation $\Pi$ constructed in Section \ref{Global}.
Recall that $\Pi$ is constructed so that the image of $D_q$ in
$\Gamma=r_{\Pi}(G_{\mathbb Q})$ is a group of type $(2n,p)$, denoted by $\Gamma_{2n,p}$, and  
 contained in $\Gamma^d$. By Theorem \ref{zariski} and its corollaries, if $d>I(n)$ then 
$G=\G_{2}$ if $\Pi$ is a lift from $\G_{2}$ and  $G=\SO_{2n+1}$ if $\Pi$ is a lift 
from $\Sp_{2n}$ and the local component $\Pi_{2}$ is the supercuspidal 
representation defined in Section \ref{depth1}.
 In particular, $\ell>2$ if $G=\SO_{2n+1}$. If $G=\G_{2}$ then by 
Theorem \ref{criterion} $\Gamma$  has a quotient isomorphic to 
$\G_{2}(\mathbb F_{\ell^{k}})$ (or a Ree group if $\ell=3$) for some $k$ provided that $d>A,B$ where 
$A$ and $B$ are in the statement of Theorem \ref{criterion}.  
 
 Assume now that $G=\SO_{2n+1}$ and that $\Pi$ is a lift 
from $\Sp_{2n}$ and the local component $\Pi_{2}$ is the supercuspidal 
representation defined in Section \ref{depth1}.

\begin{Thm} \label{classical} Assume that $\ell > 2$. 
There exists a function $d : \mathbb N \rightarrow \mathbb N$ such that 
if $\Pi_q$ is picked with $d>d(n)$ then $\Gamma=r_{\Pi}(G_{\mathbb Q})$ has a quotient $\bar\Gamma$ such that  
\begin{equation}
\label{squeeze1}
(\SO_{2n+1}(\bar{\mathbb F}_\ell)^F)^\der\subset \bar\Gamma\subset 
\SO_{2n+1}(\bar{\mathbb F}_\ell)^F,
\end{equation}
where $F$ is a Frobenius map.  
\end{Thm}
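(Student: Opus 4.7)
The strategy follows the template of Theorem \ref{criterion}, but with the Zariski-closure result (the second Corollary to Theorem \ref{zariski}, which uses the supercuspidality of $\Pi_2$ to rule out $\G_2$ in characteristic zero) playing the role that Theorem \ref{subgroup} played for the exceptional groups.

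First, after enlarging the coefficient field to a finite unramified extension of $\mathbb Q_\ell$ and conjugating, I would place $\Gamma$ inside $\SO_{2n+1}(\mathcal O)$ for the ring of integers $\mathcal O$ of the extension; such a lattice exists since the Zariski closure is the split (hence unramified) group $\SO_{2n+1}$. Reducing modulo the maximal ideal yields a surjection $\Gamma \twoheadrightarrow \bar\Gamma \subset \SO_{2n+1}(\bar{\mathbb F}_\ell)$ with pro-$\ell$ kernel. By Lemmas \ref{d-commutes}, \ref{quotient}, and \ref{deep2}, the deeply embedded image of $D_q$ contributes a subgroup of type $(2n, p, \ell)$ to $\bar\Gamma^d$. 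Applying the main theorem of Larsen--Pink \cite[Th.~0.5]{LP} to $\bar\Gamma$ gives a dichotomy: either $\bar\Gamma$ satisfies (\ref{squeeze1}) for some Frobenius $F$, which is the desired conclusion, or $\bar\Gamma$ is contained in a proper algebraic subgroup $I \subsetneq \SO_{2n+1}$ with component group of order at most an absolute constant $A$. Choosing $d(n) > A$ forces $\bar\Gamma^d \subset I^\circ(\bar{\mathbb F}_\ell)$; projecting to the Levi $M := I^\circ / N$ (with $N$ the unipotent radical) preserves, by Lemma \ref{quotient}, a subgroup of type $(2n, p, \ell)$ inside $M(\bar{\mathbb F}_\ell)$, and $M$ is a connected reductive group of rank at most $n$ with $\dim M \le \dim I^\circ < \dim \SO_{2n+1}$.

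To derive a contradiction from this second alternative, I would exploit the eigenvalue structure of the metacyclic subgroup. The generator $s$ of the $\mathbb Z/p\mathbb Z$-part acts on the standard representation of $\SO_{2n+1}$ with $2n+1$ pairwise distinct eigenvalues $\{1, \tau^{\pm q^i}\}$ for $p$ sufficiently large, so $s$ is regular semisimple in $\SO_{2n+1}$ and hence in $M$; its centralizer in $M$ is a maximal torus $T \subset M$, and the order-$2n$ element of the metacyclic group induces an element of the Weyl group of $M$ cyclically permuting the $2n$ non-trivial eigenvalues. The cyclic Weyl action forces $M$ to act irreducibly on the span of the $2n$ non-trivial weight spaces together with the fixed line, giving a faithful, irreducible, self-dual $(2n+1)$-dimensional representation of $M$ satisfying the hypothesis of Proposition \ref{almost}. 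That proposition then forces $M = \SO_{2n+1}$ or, when $n = 3$, $M = \G_2$. The first option violates the strict dimension inequality. The $\G_2$ option (only when $n=3$) is eliminated by the choice of $\Pi_2$: by Proposition \ref{two}(4), the image of wild inertia at $2$ contains an order-$2$ element whose eigenvalue multiset $\{1^{(5)}, (-1)^{(2)}\}$ is incompatible with any $\G_2(\bar{\mathbb F}_\ell)$ subgroup; since this image is a $2$-group and $\ell > 2$, it survives reduction modulo $\ell$ and lies in $\bar\Gamma$.

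The main obstacle I expect is the $n = 3$ exclusion of $\G_2$. Two points require care: first, verifying that the eigenvalue obstruction of Proposition \ref{two}(4) transfers faithfully to characteristic $\ell$ (which uses $\ell > 2$ to keep $\pm 1$ distinct); second, confirming that the image of the wild inertia at $2$ is actually contained in $\bar\Gamma$ (and not merely in some larger quotient), so that the $\G_2$ subgroup $M$, if it existed, would really have to contain this $2$-element — contradicting Proposition \ref{two}(4). Up to these verifications, the proof of (\ref{squeeze1}) goes through by setting $d(n)$ to be the maximum of the constant $I(n)$ from Theorem \ref{zariski}, the constant $A$ from the Larsen--Pink dichotomy, and any additional threshold needed to guarantee regularity of $s$ and distinctness of its eigenvalues modulo $\ell$.
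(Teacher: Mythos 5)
Your proposal has the right skeleton — integral model and reduction mod $\ell$, Larsen--Pink dichotomy, use of the deeply embedded metacyclic subgroup $\Gamma_{2n,p}$ to constrain the putative proper subgroup $I$, Proposition~\ref{almost}, and Proposition~\ref{two}(4) to eliminate $\G_2$ — which is the paper's approach. But two steps in your middle argument don't hold as stated.

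First, you pass to the Levi quotient $M=I^\circ/N$ and then speak of $s$ being regular semisimple in $M$, of weight spaces of $T\subset M$ in $\bar V$, and of $M$ acting on $\bar V$. This presupposes that the $(2n+1)$-dimensional representation $\bar V$ descends from $I^\circ$ to $M$, i.e.\ that $N$ acts trivially on $\bar V$. That is not automatic and must be proved. The paper instead shows directly that $N$ is trivial, hence $I^\circ$ is reductive: under $\Gamma_{2n,p}$ the orthogonal space $\bar V$ splits as two mutually orthogonal irreducible pieces of dimensions $2n$ and $1$; the $N$-fixed subspace is $\Gamma_{2n,p}$-stable (since $\Gamma_{2n,p}$ normalizes $N$), hence is one of these summands, whence $N$ also preserves and (being unipotent, acting on a $\Gamma_{2n,p}$-irreducible) acts trivially on the other summand, so $N=1$. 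You need some version of this before any of the weight-theoretic reasoning about $M$ makes sense.

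Second, the claim that ``the cyclic Weyl action forces $M$ to act irreducibly'' on $\bar V$ is too strong. The cyclic Weyl element only forces the $2n$ nontrivial weight spaces into a single orbit, so what you get is that any $I^\circ$-subrepresentation is either $0$, the zero-weight line, the $2n$-dimensional complement, or all of $\bar V$. The decomposition $\bar V = 1\oplus 2n$ is not excluded by the Weyl action alone. The paper closes this gap with a dedicated lemma: a connected reductive group in characteristic $\neq 2$ has no irreducible orthogonal representation of even dimension admitting a Weyl group element that transitively permutes the weights (the only self-dual minuscule candidate with such a Weyl element is the standard representation of $\Sp_{2n}$, which is symplectic, not orthogonal). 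You need this lemma, or an equivalent argument, before Proposition~\ref{almost} applies. Once these two gaps are filled, your exclusion of $\G_2$ via Proposition~\ref{two}(4) and your choice $d(n)=\max(A,I(n))$ match the paper.
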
  
\begin{proof} 
 By enlarging the field $K$ we can assume that 
$G$ is split and that $\Gamma$ is contained in a hyperspecial maximal 
parahoric subgroup.
This means that $G$ can be written as $G=\SO(V,Q)$ where $V$ is a linear space 
over $K$ and $Q$ a split quadratic form, and there exists a lattice $L$ stabilized by $\Gamma$ such 
that $Q$ takes integral values on $L$. Moreover, if $\bar V$ is the reduction modulo $\ell$ of 
$L$, then the quadratic form $Q$ reduces modulo $\ell$ to a non-degenerate quadratic form  
$\bar Q$ on $\bar V$.   In other words, the pair $(L,Q)$ defines a  
smooth group scheme  over the ring of integers  of $K$ whose generic fiber is $G$ and such that 
$H := \SO(\bar V, \bar Q)$ is the special fiber. 
Let $\bar\Gamma$ be the image of $\Gamma$ in
$H(\bar{\mathbb F}_\ell)$.  
By \cite[Th.~0.5]{LP}, it follows that either $\bar\Gamma$ satisfies the condition (\ref{squeeze1})
for some Frobenius map or that $\bar\Gamma$ is contained in a proper algebraic subgroup
of $I\subset H$ with a component group $I/I^\circ$
whose order is bounded above by an absolute constant $A(n)$. If we pick $d> A(n)$ then 
$\bar\Gamma^d\subset I^\circ(\bar{\mathbb F}_\ell)$. It follows that $\Gamma_{2n,p}$ is contained in $I^\circ$. 
Under the action of $\Gamma_{2n,p}$, the orthogonal space $\bar V$ decomposes as a sum of two 
irreducible mutually orthogonal representations of dimensions $2n$ and $1$. This implies that the 
nilpotent radical of $I^\circ$ is trivial. Indeed, if $N$ is a non-trivial unipotent radical of $I^\circ$,then 
there exists a non-trivial subspace $\bar U$ of $\bar V$ fixed by $N$.  
Since $\Gamma_{2n,p}$ normailizes $N$, $\bar U$ must be one of the two mutually orthogonal 
$\Gamma_{2n,p}$-summands. By orthogonality, $N$ must preserve the other summand. Since that summand 
is also $\Gamma_{2n,p}$-irreducible, $N$ must be trivial, a contradiction.   
Thus, $I^\circ$ is reductive. We claim that $\bar V$ is an irreducible $I^\circ$-module. If not, then 
$I^\circ$ admits a $2n$-dimensional orthogonal representation such that there exists 
a Weyl group element in $I^\circ$ permuting transitively all weights. We need the following: 

\begin{lemma} Let $G$ be a connected reductive group over an algebraically closed field of
 characteristic $\neq 2$. 
Then $G$ has no irreducible orthogonal representations of even dimension such that there exists a Weyl group 
element permuting all weights. 
\end{lemma}
\begin{proof} 
Of course, we can assume that $G$ is semi-simple and that $G=G_1 \times \cdots \times G_k$, a product of 
almost simple groups. If $V$ is a miniscule, self-dual representation of $G$ then $V=V_1\otimes \cdots \otimes V_k$ 
where $V_i$ are miniscule and self-dual representations of $G_i$. Of course, if $G$ contains a Weyl group element 
permuting all weights in $V$, then the same holds for all pairs $(G_i,V_i)$. The converse is true only if 
the dimensions of $V_i$ are pairwise relatively prime. If $G$ is simple, an argument in \cite{KLS} shows that the 
only minuscule, self-dual representations with such Weyl group element is the standard 
representation of $\Sp_{2n}$ and its Frobenius twists. (The list there includes also the standard 
representation of $\SO_{2n}$, but that one can be excluded by a direct inspection.)
 Since even numbers are never pairwise relatively prime, 
we can conlude that $G$ is simple and that the representation is the standard representation of $\Sp_{2n}$. 
This one is not orthogonal, however, and the lemma follows. 
\end{proof} 

The lemma implies that $\bar V$ is an irreducible representation of $I^\circ$. 
Therefore, the pair $(I^\circ , \bar V)$ satisfies 
 satisfies conditions of Lemma \ref{almost}. It follows that  
$I^{\circ}=H$ or, if $n=3$, $I^{\circ}=\G_{2}(\bar{\mathbb F}_{\ell})$. 
Since $\G_{2}$ has the trivial center, no outer automorphisms and it acts irreducibly 
on $\bar V$, any element of $I/I^{\circ}$ is represented by a scalar matrix. However, 
by \v Cebotarev's density theorem, any element in $\bar\Gamma$ has $1$ as an eigenvalue. 
It follows that $I=I^{\circ}$. Since the image of the local decomposition group $D_{2}$
contains elements of order 2 which are not contained in $\G_{2}$ we see that $I$ cannot be 
$\G_{2}$. Thus we have $I^{\circ}=H$ in all cases. This is a contradiction.  The theorem 
is proved with $d(n)=\max(A(n), I(n))$. 
\end{proof} 

Summarizing, we have shown that mod $\ell$ reduction of the representations 
$r_{\Pi}$ give rise to $\G_{2}(\mathbb F_{\ell^{k}})$ (or a Ree group if $\ell=3$), 
$\SO_{2n+1}(\mathbb F_{\ell^{k}})^{\der}$ or 
$\SO_{2n+1}(\mathbb F_{\ell^{k}})$ as Galois group.
In other words we have essentially proved the following theorem that we stated 
in the introduction: 

\begin{Thm} Let $t$ be a positive integer. We take $t$ to be even if $\ell=3$ in the 
first case below. 
\begin{enumerate}
\item  Let $\ell$ be a prime. Then there exists an integer $k$ divisible by $t$ such that 
the simple group $\G_{2}(\mathbb F_{\ell^{k}})$ appears as a Galois group over
$\mathbb Q$. 
\item Let $\ell$ be an odd prime. Then there exists an integer $k$ divisible by $t$ such that 
 the finite simple group $\SO_{2n+1}(\mathbb F_{\ell^{k}})^{\der}$ or the 
finite classical group $\SO_{2n+1}(\mathbb F_{\ell^{k}})$
appears as a Galois group over $\mathbb Q$. 
\item If $\ell \equiv 3, 5\pmod{8}$,  then there exists an integer $k$ divisible by $t$ such that  the finite simple group 
$\SO_{2n+1}(\mathbb F_{\ell^{k}})^{\der}$ 
appears as a Galois group over $\mathbb Q$. 
\end{enumerate}
\end{Thm}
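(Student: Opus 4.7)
The plan is to assemble the preceding results. Fix $t$ as given and choose $d$ exceeding all the constants appearing in Theorems \ref{zariski}, \ref{criterion}, and \ref{classical}. Take $K$ to be the compositum of all Galois extensions of $\mathbb{Q}$ of degree at most $d$ ramified only at $\{2,\ell\}$, and apply Lemma \ref{deep} with $m=2n$ to produce auxiliary primes $p$ and $q$.

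For case (1), build a globally generic cuspidal $\sigma$ on $\G_{2}(\mathbb{A})$ as in Section \ref{Global} with $\sigma_{q}=\sigma(\tau)$ for $\tau$ a primitive $p$-th root of unity and $\sigma_{2}$ unramified; lift via the exceptional theta correspondence with $\E_{7}$ and the generic lift of \cite{CKPS} to a self-dual cuspidal $\Pi$ on $\GL_{7}(\mathbb{A})$, then invoke Shin's refinement of Theorem \ref{reciprocity} to produce $r_{\Pi}$. The corollary to Theorem \ref{zariski} identifies the Zariski closure of $\Gamma=r_{\Pi}(G_{\mathbb{Q}})$ as $\G_{2}(\bar{\mathbb{Q}}_{\ell})$, Lemma \ref{deep2} places a subgroup of type $(6,p,\ell)$ inside $\Gamma^{d}$, and Theorem \ref{criterion}(a) then yields a finite simple quotient $\bar\Gamma$ squeezed between $(H^{\ad}(\bar{\mathbb{F}}_{\ell})^{F})^{\der}$ and $H^{\ad}(\bar{\mathbb{F}}_{\ell})^{F}$ for $H^{\ad}$ of type $\G_{2}$. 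Since $\G_{2}$ is its own adjoint group, $\bar\Gamma$ is either $\G_{2}(\mathbb{F}_{\ell^{k}})$ or (for $\ell=3$) a Ree group. The divisibility $t\mid k$ comes from Lemma \ref{deep}(3); when $\ell=3$ and $t$ is even, Ree groups $^{2}\G_{2}(\mathbb{F}_{3^{2f+1}})$ are excluded by the remark after Lemma \ref{deep}, since $2f+1$ is odd and hence not divisible by the even $t$.

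For cases (2) and (3), the parallel construction uses $\sigma$ on $\Sp_{2n}(\mathbb{A})$ with $\sigma_{2}$ the Jiang-Soudry descent of the self-dual supercuspidal $\Pi_{2}$ of Section \ref{self-dual}. The lift $\Pi$ on $\GL_{2n+1}(\mathbb{A})$ is supercuspidal at $2$, so Theorem \ref{reciprocity} attaches $r_{\Pi}$ without Shin's improvement. Theorem \ref{zariski} together with Proposition \ref{two}(4) identifies the Zariski closure as $\SO_{2n+1}(\bar{\mathbb{Q}}_{\ell})$ (the second property of the image $I$ at $2$ rules out $\G_{2}$ even when $n=3$), and Theorem \ref{classical} delivers a finite quotient $\bar\Gamma$ satisfying $(\SO_{2n+1}(\bar{\mathbb{F}}_{\ell})^{F})^{\der}\subset\bar\Gamma\subset\SO_{2n+1}(\bar{\mathbb{F}}_{\ell})^{F}$. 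Lemma \ref{deep}(3) again gives $t\mid k$, completing case (2).

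For (3), one must further show that when $\ell\equiv 3,5\pmod 8$, $\bar\Gamma$ cannot equal the full $\SO_{2n+1}(\mathbb{F}_{\ell^{k}})$. The nontrivial character of any such $\bar\Gamma$ on the quotient by $\SO_{2n+1}(\mathbb{F}_{\ell^{k}})^{\der}$ (spinor norm) pulls back to a quadratic character of $G_{\mathbb{Q}}$ unramified outside $\{2,\ell,q\}$; by Proposition \ref{two}(1) the abelianization of $r_{\Pi}(D_{2})$ is cyclic of odd order $2n+1$, so the character is unramified at $2$, and by Lemma \ref{deep}(4) any quadratic character ramified only at $\{\ell\}$ corresponds to a field inside $K$ and is therefore trivial at $q$. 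The hypothesis $\ell\equiv 3,5\pmod 8$, equivalent to $2$ being a nonsquare mod $\ell$, then forces the resulting global character to be trivial, placing $\bar\Gamma$ inside the simple group $\SO_{2n+1}(\mathbb{F}_{\ell^{k}})^{\der}$ and (combined with (2)) yielding equality. The main obstacle is this last spinor-norm verification at $\ell$, which refines (2) in a way sensitive to the congruence class of $\ell$; everything else is a careful orchestration of the theorems already in place.
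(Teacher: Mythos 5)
Your construction follows the paper's strategy closely through cases (1) and (2), and those parts are fine. In case (3), however, the argument as written has a genuine gap and an imprecision that together leave the final contradiction unjustified.

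First, you never establish that the quadratic character is unramified at $q$. You pass from ``unramified outside $\{2,\ell,q\}$'' and ``unramified at $2$'' directly to considering characters ``ramified only at $\{\ell\}$,'' but nothing in your argument rules out ramification at $q$. The paper handles this by observing that $r_{\Pi}(I_q)$ has odd prime order $p$, hence lies inside the index-two subgroup $\SO_{2n+1}(\mathbb F_{\ell^k})^{\der}$, so the character is unramified at $q$. Your citation of Lemma \ref{deep}(4) is misplaced here: the fact that $q$ splits in $K$ is only relevant \emph{after} you know the character's field lies inside $K$, which presupposes the unramification at $q$ you haven't proved.

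Second, you state that Proposition \ref{two}(1) gives unramification at $2$, but that is strictly weaker than what the argument needs and what the proposition actually delivers. Since $r_{\Pi}(D_2)$ has abelianization of odd order $2n+1$, any quadratic character must kill the \emph{entire} image of $D_2$, not just inertia — i.e., $2$ splits completely in the quadratic field. This is essential: the unique quadratic field ramified only at $\ell$, namely $\mathbb Q(\sqrt{\pm\ell})$, is automatically unramified at $2$, so unramification at $2$ alone would not yield any contradiction. It is precisely the requirement that $2$ split (equivalently, that $2$ be a square mod $\ell$, equivalently $\ell\equiv 1,7\pmod 8$) that conflicts with $\ell\equiv 3,5\pmod 8$. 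You invoke the quadratic-residue equivalence at the end, which suggests you have the splitting condition in mind, but the step as stated does not support the conclusion. Both points are easily repaired with the ingredients you already cite, but as written the part (3) argument does not close.
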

\begin{proof} The divisibility of $k$ by $t$ follows from part (3) of Lemma \ref{deep}.
The remark following Lemma \ref{deep} shows that by taking $t$ even if $\ell=3$
in the first case eliminates Ree groups. It remains to deal with the third 
statement. Assume now that the Galois group given by part (2)
 is $\SO_{2n+1}(\mathbb F_{\ell^{k}})$.  Then the subgroup 
$\SO_{2n+1}(\mathbb F_{\ell^{k}})^{\der}$ of index 2 defines a quadratic field 
$K$. This field is unramified for all primes different from $2,\ell$ and $q$, since the 
same holds for the representation $r_{\Pi}$. Since the image of the inertia $I_{q}$ is of order $p$, 
it lands in the subgroup $\SO_{2n+1}(\mathbb F_{\ell^{k}})^{\der}$.  Thus, 
$K$ is unramified at $q$ also. Moreover, by 
Proposition \ref{two}, the image of the decomposition group $D_{2}$ is a group such that 
the quotient by its commutator is odd. Such group must be contained in 
the subgroup $\SO_{2n+1}(\mathbb F_{\ell^{k}})^{\der}$. This shows not only that 
that $K$ is unramified at $2$ but $2$ splits in $K$. We remind the reader that the unique 
quadratic field ramified at $\ell$ and no other primes is 
 $\mathbb Q(\sqrt{\ell})$ if $\ell\equiv 1\pmod{4}$ 
and $\mathbb Q(\sqrt{-\ell})$ if $\ell\equiv 3\pmod{4}$. However, 
since $2$ splits in this field if and only if $\ell \equiv 1,7 \pmod{8}$ 
we see that if $\ell \equiv 3,5 \pmod{8}$ the 
Galois group constructed in part (2) is in fact  $\SO_{2n+1}(\mathbb F_{\ell}^{k})^{\der}$.  
 \end{proof}

\section{On future directions}\label{last}

One difficulty that we needed to address in this paper came from the fact that
the group  $\GL_{2n+1}(\mathbb Q_{p})$
has no self-dual supercuspidal representation unless $p=2$. 
In order to construct Galois groups of type $\B_{n}$ this problem was resolved by 
introducing the self-dual supercuspidal representation $\Pi_{2}$ of 
$\GL_{2n+1}(\mathbb Q_{2})$ whose parameter contains a Jordan subgroup of 
$\SO_{2n+1}(\mathbb C)$. 
For Galois groups of type $\G_{2}$ the construction is based on a technical improvement 
of Theorem \ref{reciprocity} due to Shin, which is based on the fundamental lemma 
for unitary group. Another way, which avoids the use of the fundamental 
lemma, would be to pick $\Pi_{2}$ so that its parameter comes from 
the Jordan subgroup in $\G_{2}$. More precisely the parameter of $\Pi_{2}$ 
should be the homomorphism $\phi_{2}: W_{\mathbb Q_{2}}\rightarrow 
\G_{2}$ described in Proposition \ref{jordan}. In order to obtain a global lift 
from $\G_{2}$ to $\GL_{7}$ with this $\Pi_{2}$ as a local component one would 
need to complete the following (doable) program: 
\begin{itemize}
\item Define, via induction from open compact subgroup,
 a generic supercuspidal $\sigma_{2}$ representation of $\G_{2}(\mathbb Q_{2})$ 
corresponding to the parameter $\phi_{2}$. 
\item Compute the theta lift of $\sigma_{2}$ to $\PGSp_{6}(\mathbb Q_{2})$. 
\item Show, using the method of \cite{Sa2}, that the further lift to $\GL_{7}(\mathbb Q_{2})$
is $\Pi_{2}$. 
\end{itemize}

\smallskip 

A construction of supercuspidal representations attached to parameters arising 
from Jordan subgroups is a subject of the forthcoming paper by Gross and Reeder \cite{GR}.
 The completion of the three step program would give Galois groups of type $\G_{2}$ except in 
 the residual characteristic 2 without using results of Shin \cite{Sh}. 
 There is yet another approach which removes the restriction $\ell \neq 2$, but
   introduces a different conjecture. 
Let $G$ denote $\Sp_{2n}$ or $\G_{2}$. Then using the trace formula it is possible to show that 
there exist a cuspidal automorphic representation $\sigma$ of $G(\mathbb A)$ 
unramified at all primes different from $\ell,q$ and such that 
\begin{itemize}
\item $\sigma_{\infty}$ is a discrete series representation with a large (unspecified) 
parameter (weight). 
\item $\sigma_{q}$ is a specified supercuspidal representation. 
\item $\sigma_{\ell}$ is the Steinberg representation. 
\end{itemize}
\emph{Assuming} that $\sigma$ is globally generic
then $\Pi$, the lift of $\sigma$ to $\GL_{2n+1}(\mathbb A)$, is 
automatically cuspidal and has a discrete series representation at one 
local place, since $\Pi_{\ell}$ is the Steinberg representation of 
$\GL_{2n+1}(\mathbb Q_{\ell})$. (We use here that the theta lift of the Steinberg 
representation of $\G_{2}(\mathbb Q_{\ell})$ is the Steinberg representation of 
$\PGSp_{6}(\mathbb Q_{\ell})$, see \cite{GrS}.) 
Since matrix coefficients of the Steinberg representation 
are in $L^{1+\epsilon}(G)$ and therefore not integrable, we note that 
the method of Poincar\'e series cannot be used to  construct such $\sigma$.

\smallskip 

In principle our method could be extended to other groups. The main limitation at 
the moment is the lack of $\ell$-adic representations attached to automorphic 
representations. If we assume, for example, that one can attach a $26$-dimensional 
$\ell$-adic representation to an algebraic automorphic form of the exceptional 
group ${\rm F}_{4}$ then we would be able to construct finite groups of type ${\rm F}_{4}$ as 
Galois groups over $\mathbb Q$. Indeed, to this end one would pick a  cuspidal 
automorphic form $\sigma$ of ${\rm F}_{4}$ such that for two primes $p_{1}$ and $p_{2}$ the local 
components $\sigma_{p_{1}}$ and $\sigma_{p_{2}}$ are tame supercuspidal representations 
whose parameters have groups of type $(8, p_{1})$ and $(12, p_{2})$, respectively, as the 
image. Thus,  if the two parameters at $p_{1}$ and $p_{2}$ are 
picked so that the images of the local decomposition 
groups  are deeply embedded, then the results of Section \ref{exceptional}
imply that the restriction modulo $\ell$ of the 
$\ell$-adic representation attached to $\sigma$ will give finite groups of type 
${\rm F}_{4}(\ell^{k})$ as Galois groups over $\mathbb Q$.

\end{document}